\tikzset{>=latex}
\newcommand\RedeclareMathOperator{%
  \@ifstar{\def\rmo@s{m}\rmo@redeclare}{\def\rmo@s{o}\rmo@redeclare}%
}
\newcommand\rmo@redeclare[2]{%
  \begingroup \escapechar\m@ne\xdef\@gtempa{{\string#1}}\endgroup
  \expandafter\@ifundefined\@gtempa
     {\@latex@error{\noexpand#1undefined}\@ehc}%
     \relax
  \expandafter\rmo@declmathop\rmo@s{#1}{#2}}
\newcommand\rmo@declmathop[3]{%
  \DeclareRobustCommand{#2}{\qopname\newmcodes@#1{#3}}%
}
\crefname{enumi}{}{}
\definecolor{lightGreen}{rgb}{.75, 1, .75}
\definecolor{lightYellow}{rgb}{1, 1, .65}
\definecolor{lightBlue}{rgb}{.75, .75, 1}
\theoremstyle{plain}
\newtheorem{Pocz}{Poczatek}[section]
\newtheorem{Proposition}[Pocz]{Proposition}
\newtheorem{Theorem}[Pocz]{Theorem}
\newtheorem{Corollary}[Pocz]{Corollary}
\newtheorem{Fact}[Pocz]{Fact}
\newtheorem{Lemma}[Pocz]{Lemma}
\newtheorem{noproofLemma}[Pocz]{Lemma}
\newtheorem{Notation}[Pocz]{Notation}
\newtheorem{Definition/Theorem}[Pocz]{Definition/Theorem}
\newtheorem{Example}[Pocz]{Example}
\newtheorem{LetterTheorem}{Theorem}
\theoremstyle{definition}}%
\theoremstyle{plain}}%
\newtheorem{Definition}[Pocz]{Definition}
\theoremstyle{remark}}%
\theoremstyle{plain}}%
\newtheorem{Remark}[Pocz]{Remark}
\theoremstyle{plain}}%
\theoremstyle{nonumberplain}
\newtheorem{proof}{Proof:}
\theoremstyle{plain}
\renewcommand{\epsilon}{\varepsilon}
\DeclareMathOperator{\supp}{supp}
\def\CC{{\mathbb C}}
\def\ZZ{{\mathbb Z}}
\def\NN{{\mathbb N}}
\def\TT{{\mathbb T}}
\def\UU{{\mathcal{U}}}
\def\VV{{\mathcal{V}}}
\def\HH{{H^{(0)}}}
\newcommand{\G}[1]{G^{(#1)}}
\newcommand{\Gn}{G^{(n)}}
\newcommand{\Gi}{G^{(i)}} %
\newcommand{\gpdG}[1]{\G{#1}}
\newcommand{\gpdH}[1]{H^{(#1)}}
\newcommand{\gpdL}[1]{L^{(#1)}} 
\newcommand{\fslash}{\ensuremath{/}}
\newcommand{\bslash}{\reflectbox{\fslash}}
\newcommand{\lorbit}[2]{\raisebox{-.2ex}{#1} \bslash {#2}} 
\newcommand{\rorbit}[2]{{#1} \fslash \raisebox{-.2ex}{#2}} 
\newcommand{\sn}{\vskip10pt\noindent}
\RedeclareMathOperator*{\st}{st} 
\numberwithin{equation}{section}
\newcommand{\coverU}{\mathcal{U}}
\newcommand{\coverV}{\mathcal{V}}
\newcommand{\coverW}{\mathcal{W}}
\newcommand{\restrict}[2]{{\left.#1\right|_{#2}}}
\newcommand{\cl}[1]{\overline{#1}} 
\author{Kyle ~ Austin}
\thanks{The first author was funded by the Israel Science Foundation (grant No. 522/14).}
\address{Ben Gurion University of the Negev}
\email{ksaustin88@gmail.com}
\author{Magdalena ~ C. ~ Georgescu}
\thanks{The second author was supported by the ISF within the ISF-UGC joint research program framework (grant No. 1775/14), as well as by Israel Science Foundation grant no. 476/16.}
\address{Ben Gurion University of the Negev}
\email{georgesc@post.bgu.ac.il }
\title[Inverse Systems of Groupoids, with Applications to Groupoid $C^*$-algebras
]%
  {Inverse Systems of Groupoids, \\
with Applications to Groupoid $C^*$-algebras
}
\keywords{}
\subjclass[2000]{Primary 54F45; Secondary 55M10}
\title{Inverse Systems of Groupoids, \\
with Applications to Groupoid $C^*$-algebras}
\author{Kyle Austin\footnote{The first author was funded by the Israel Science Foundation (grant No. 522/14)}\hspace{.1cm} and  Magdalena C. Georgescu\footnote{The second author was supported by the ISF within the ISF-UGC joint research program framework (grant No. 1775/14), as well as by Israel Science Foundation grant no. 476/16.}}
\begin{document}

\maketitle

\begin{abstract}
We define what it means for a proper continuous morphism between groupoids to be Haar system preserving, and show that such a morphism induces (via pullback) a *-morphism between the corresponding convolution algebras. We proceed to provide a plethora of examples of Haar system preserving morphisms and discuss connections to noncommutative CW-complexes and interval algebras. We prove that an inverse system of groupoids with Haar system preserving bonding maps has a limit, and that we get a corresponding direct system of groupoid $C^*$-algebras.  An explicit construction of an inverse system of groupoids is used to approximate a $\sigma$-compact groupoid $G$ by second countable groupoids; if $G$ is equipped with a Haar system and 2-cocycle then so are the approximation groupoids, and the maps in the inverse system are Haar system preserving.  As an application of this construction, we show how to easily extend the Maximal Equivalence Theorem of Jean Renault to $\sigma$-compact groupoids.
\end{abstract}

\section{Introduction}

Many $C^*$-algebras can be modeled as groupoid $C^*$-algebras (see e.g. \cite{Renault}), which allows the use of the additional structural information to answer general questions in the theory of $C^*$-algebras. For example, J. L. Tu showed in \cite{Tu} that groupoids which satisfy the Haagerup property (e.g. amenable groupoids) have $C^*$-algebras which satisfy the UCT.  The collection of $C^*$-algebras to which this viewpoint applies is expanded further by considering twisted groupoid $C^*$-algebras.  In \cite{Renault-Cartan}, J. Renault shows that every Cartan pair must necessarily correspond to a Cartan pair $(C^*(G,\sigma),C_0(\G0))$ where $G$ is an \'etale groupoid with a twist $\sigma$.  One of the strengths of the groupoid approach to $C^*$-algebras comes from the ability to give geometric interpretations to $C^*$-algebra properties. For example, C. Laurent-Gengoux, J. L. Tu, and P. Xu conjecture in \cite{GTX} a geometric interpretation of classes in $K_0(C^*(G,\sigma))$ for Lie groupoids $G$ with a twist $\sigma$ as twisted vector bundles over the underlying groupoid $G$. In \cite{FG}, E. Gillaspy and C. Farsi extend these ideas to locally compact Hausdorff groupoids and give positive results for a class of \'etale groupoids.

Our main objective in this paper is to construct inverse systems of groupoids which induce direct systems of $C^*$-algebras and for which the groupoid $C^*$-algebra of the inverse limit groupoid is equal to the direct limit of the induced directed system of $C^*$-algebras. A key observation (see \cref{morphisms}) is that there are morphisms between groupoids with Haar systems such that the pullback map preserves the convolution product. We call such maps Haar measure preserving. A consequence of this observation is that inverse systems of groupoids with Haar measure preserving bonding maps induce direct system of $C^*$-algebras. In other words, we can construct a ``spectral'' picture at the level of groupoids for certain inductive limits of $C^*$-algebras. The following is our main existence theorem for inverse limits of groupoids, proven in \cref{proofoftheorema}:

\begin{restatable}{LetterTheorem}{limitexists}\label{mainthm.limitexists}
Let $\{G_\alpha,\sigma_\alpha, \{\mu_\alpha^y:y\in \G0_\alpha\},q^\alpha_\beta,A\}$ be an inverse system of groupoids with Haar systems and 2-cocycles and with bonding maps which are proper, continuous, surjective, Haar system preserving and cocycle preserving. The inverse limit groupoid $G= \varprojlim_{\alpha}G_\alpha$ exists and has a Haar system of measures $\{\mu^x:x\in \G0\}$ and 2-cocycle $\sigma$ such that $(q_\alpha)_*(\mu^x) = \mu_\alpha^{q_\alpha(x)}$ and $q_\alpha^*(\sigma) = \sigma_\alpha$.  Moreover, the pullback morphisms induce a direct system $\{C_c(G_\alpha,\sigma_\alpha), (q^\alpha_\beta)^*,A\}$ of convolution algebras that extends to a direct system of maximal completions (i.e. $C^*(G,\sigma) = \varinjlim_\alpha C^*(G_\alpha,\sigma_\alpha)$).
\end{restatable}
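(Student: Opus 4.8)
The plan is to build the limit object concretely and verify the required structure piece by piece, relying on \cref{morphisms} for the fact that each Haar system preserving bonding map pulls back to a $*$-morphism of convolution algebras. First I would construct $G$ as the set-theoretic inverse limit $\{(g_\alpha) \in \prod_\alpha G_\alpha : q^\alpha_\beta(g_\alpha) = g_\beta \text{ for } \beta \le \alpha\}$, topologized as a subspace of the product and equipped with the pointwise groupoid operations, so that its unit space is $\G0 = \varprojlim_\alpha \G0_\alpha$. Because each $G_\alpha$ is locally compact Hausdorff and each $q^\alpha_\beta$ is proper and continuous, a standard argument shows $G$ is locally compact Hausdorff and that the canonical projections $q_\alpha : G \to G_\alpha$ are proper; surjectivity of the bonding maps yields surjectivity of each $q_\alpha$, a compatible preimage being assembled using properness. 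I would also record that each range fibre $G^x = \{g \in G : r(g) = x\}$ is the inverse limit $\varprojlim_\alpha G_\alpha^{q_\alpha(x)}$ of the fibres downstairs.

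The heart of the argument is the Haar system. For a fixed unit $x \in \G0$, the Haar system preserving hypothesis says precisely that the Radon measures $\mu_\alpha^{q_\alpha(x)}$ on the fibres $G_\alpha^{q_\alpha(x)}$ form a projective system under pushforward along the bonding maps. I would invoke the projective-limit theorem for Radon measures on locally compact Hausdorff spaces (the properness of the bonding maps supplying the consistency needed) to obtain a Radon measure $\mu^x$ on $G^x$ with $(q_\alpha)_*(\mu^x) = \mu_\alpha^{q_\alpha(x)}$ for every $\alpha$, which is exactly the asserted compatibility. It then remains to verify the Haar axioms: full support follows from full support of each $\mu_\alpha^{q_\alpha(x)}$ together with surjectivity of the projections; left-invariance follows because left-translation in $G$ projects to left-translation in each $G_\alpha$, so both sides of the invariance identity push forward to the same measure at every level and hence agree by uniqueness of the projective limit; and continuity of $x \mapsto \int_G f\, d\mu^x$ for $f \in C_c(G)$ I would first check for pullbacks $f = q_\alpha^*(f_\alpha)$, where it reduces to continuity of the Haar system on $G_\alpha$, and then extend by a uniform-approximation argument using density of such pullbacks in $C_c(G)$.

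For the cocycle I would set $\sigma := \sigma_\alpha \circ (q_\alpha \times q_\alpha)$ on the composable pairs $\G2$; the cocycle preserving hypothesis makes this independent of $\alpha$, hence well defined, and gives immediately $q_\alpha^*(\sigma) = \sigma_\alpha$. The $2$-cocycle identity and continuity pass to the limit since the projections are continuous homomorphisms and each $\sigma_\alpha$ is a continuous $2$-cocycle. With $(G,\sigma)$ and its Haar system in hand, \cref{morphisms} makes each $(q^\alpha_\beta)^* : C_c(G_\beta, \sigma_\beta) \to C_c(G_\alpha, \sigma_\alpha)$ a $*$-morphism of twisted convolution algebras; functoriality of pullback, $(q^\alpha_\gamma)^* = (q^\alpha_\beta)^* \circ (q^\beta_\gamma)^*$, assembles these into a direct system $\{C_c(G_\alpha,\sigma_\alpha),(q^\alpha_\beta)^*,A\}$, and the maps $q_\alpha^*$ form a compatible family of $*$-morphisms into $C_c(G,\sigma)$.

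Finally, for the $C^*$-identification I would show $\bigcup_\alpha q_\alpha^*\big(C_c(G_\alpha, \sigma_\alpha)\big)$ is dense in $C_c(G,\sigma)$ for the inductive-limit topology: since $G = \varprojlim_\alpha G_\alpha$ with proper bonding maps, this union is a point-separating (the $q_\alpha$ jointly separate points), conjugation-closed, nowhere-vanishing subalgebra, so Stone--Weierstrass gives sup-norm density, and properness upgrades this to approximation with uniformly controlled supports. Passing to maximal completions, the compatible family $\{q_\alpha^*\}$ induces a $*$-homomorphism $\Phi : \varinjlim_\alpha C^*(G_\alpha, \sigma_\alpha) \to C^*(G, \sigma)$ with dense range, hence surjective, and I would prove injectivity by showing that every representation of $C_c(G,\sigma)$ bounded for the maximal norm restricts along the $q_\alpha^*$ to a compatible family of representations of the $C_c(G_\alpha,\sigma_\alpha)$, so that the maximal norm on the pullback union coincides with the direct-limit norm, making $\Phi$ isometric. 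The main obstacle I expect is twofold: proving joint continuity of the limiting Haar system in the base point (the projective-limit theorem yields the measures and their invariance almost for free, but continuity forces the approximation step to be carried out uniformly), and establishing the isometry of $\Phi$, i.e.\ matching the universal norm on $G$ with the direct limit of the universal norms, which hinges on the correspondence between representations of $C^*(G,\sigma)$ and compatible families of representations of the $C^*(G_\alpha,\sigma_\alpha)$.
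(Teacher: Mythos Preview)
Your proposal is correct and follows essentially the same route as the paper: build $G$ concretely as the inverse limit, invoke a projective-limit-of-measures theorem (the paper cites Choksi \cite{Choksi}, with properness of the bonding maps supplying the needed compactness condition) to obtain each $\mu^x$, verify the Haar axioms with continuity established via approximation by pullbacks (the paper uses a partition-of-unity $\epsilon/3$ argument where you invoke Stone--Weierstrass, but the content is the same), define $\sigma$ from the compatible $\sigma_\alpha$, and conclude via \cref{inducedmorphism} and density. On the $C^*$-level identification the paper is actually more summary than you are: it simply asserts that $C^*(G,\sigma) = \varinjlim_\alpha C^*(G_\alpha,\sigma_\alpha)$ follows from density of $\bigcup_\alpha q_\alpha^* C_c(G_\alpha,\sigma_\alpha)$ in $C^*(G,\sigma)$, without separately treating the isometry/injectivity obstacle you flag.
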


In a recent paper (see \cite{BussSims}),  A. Buss and A. Sims show  that  $C^*$-algebras that are not isomorphic to their opposite algebras cannot be groupoid $C^*$-algebras.  Nonetheless, there are a lot of examples of $C^*$-algebras that admit groupoid models, although uncovering the underlying groupoid structure of a $C^*$-algebra, if it exists, can be a non-trivial task. It is shown in \cite{EP} by R. Exel and E. Pardo  that every Kirchberg $C^*$-algebra is the groupoid $C^*$-algebra of an \'etale groupoid.  Another common technique for creating groupoids with prescribed groupoid $C^*$-algebras is via \'etale equivalence relations  on the Cantor set, see \cite{DPS} or \cite{P}. It is known that groupoid $C^*$-algebras of \'etale groupoids $G$ are simple if and only if $G$ is minimal and principal (see \cite{BCFS}). Hence, if one can find minimal and principal \'etale equivalence relations on the Cantor set with the correct $K$-theory, one can then appeal to the classification program of Elliot.  One possible application of \cref{mainthm.limitexists} is to use an inductive limit description of a specific $C^*$-algebra to construct an inverse system of groupoids whose dual is the given direct system.  However, \cref{mainthm.limitexists} is not general enough to handle some of the standard examples. In \cite{AM}, the first author and A. Mitra plan to adjust \cref{mainthm.limitexists} to construct groupoids whose groupoid $C^*$-algebras are equal to any UHF-algebra, infinite tensor powers of finite dimensional $C^*$-algebras, the Jiang-Su algebra, and the Razak-Jacelon algebra respectively. 

Inverse systems of groupoids of the type mentioned in \cref{mainthm.limitexists} are also constructed for our approximations of $\sigma$-compact groupoids.  This part of the paper arose out of an investigation of possible connections between geometric property (T) from coarse geometry and property (T) for topological groupoids.  Many groupoid results are stated for second countable groupoids; unfortunately, the groupoids arising from coarse geometry are not second countable, though they are $\sigma$-compact. The well known fact that $\sigma$-compact spaces are inverse limits of second countable spaces suggested to us a way of creating an ``easy'' bridge between results known in the second countable case to results about $\sigma$-compact groupoids. We accomplish this goal with the following theorem, proven in \cref{guts}: 

\begin{LetterTheorem}\label{mainthm.approximation}
Let $(G,\sigma)$ be a locally compact, Hausdorff and $\sigma$-compact groupoid with Haar system of measures $\{\mu^x:x\in G^0\}$ and $2$-cocycle $\sigma:\G2\to \TT$. One can obtain $(G,\sigma)$ as the inverse limit of an inverse system of  second countable, locally compact, and Hausdorff groupoids $\{(G_\alpha,\sigma_\alpha),q^\alpha_\beta,A\}$ in the category of locally compact groupoids with proper continuous groupoid morphisms. 

Moreover, there are induced Haar systems of measures and 2-cocycles on the $G_\alpha$'s, compatible with the bonding maps, such that the pullback morphisms induce a directed system of topological $*$-algebras $\{C_c(G_\alpha,\sigma_\alpha),(q^\alpha_\beta)^*,A\}$ such that $C_c(G,\sigma) = \varinjlim_\alpha C_c(G_\alpha,\sigma_\alpha)$.  This directed system can be extended to the maximal $C^*$-algebra completions.
\end{LetterTheorem}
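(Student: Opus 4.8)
The plan is to reduce \cref{mainthm.approximation} to \cref{mainthm.limitexists} by explicitly building an inverse system of second countable quotients of $(G,\sigma)$ and then checking that its inverse limit reconstructs $G$ together with its Haar system and cocycle. The indexing set $A$ will consist of suitable \emph{countable} families $\mathcal F$ of continuous, compactly supported functions on $G$, ordered by inclusion. To each such $\mathcal F$ I associate the smallest sub-$C^*$-algebra $B_{\mathcal F}\subseteq C_0(G)$ (uniform closure of the generated $*$-algebra) containing $\mathcal F$ and closed under the co-operations dual to the groupoid structure, namely $f\mapsto f\circ\iota$ (inversion), $f\mapsto f\circ s$ and $f\mapsto f\circ r$ on the unit space, and the comultiplication dual to the partial multiplication $\G2\to G$. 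By Gelfand duality $B_{\mathcal F}\cong C_0(G_{\mathcal F})$ for a second countable, locally compact, Hausdorff space $G_{\mathcal F}$, and the inclusion $B_{\mathcal F}\hookrightarrow C_0(G)$ dualizes to a proper, continuous, surjective map $q_{\mathcal F}\colon G\to G_{\mathcal F}$. Equivalently, $G_{\mathcal F}=G/\!\sim_{\mathcal F}$ where $g\sim_{\mathcal F}h$ iff $f(g)=f(h)$ for all $f\in B_{\mathcal F}$; the closure conditions make $\sim_{\mathcal F}$ a \emph{groupoid congruence}, so that $G_{\mathcal F}$ is a second countable locally compact Hausdorff groupoid and $q_{\mathcal F}$ is a groupoid morphism.

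Next I would arrange that each $G_{\mathcal F}$ inherits a Haar system and a $2$-cocycle for which $q_{\mathcal F}$ is Haar system preserving and cocycle preserving, in the senses of \cref{morphisms}. The candidate Haar system is the pushforward $\mu_{\mathcal F}^{y}:=(q_{\mathcal F})_*\mu^x$ for any $x$ with $q_{\mathcal F}(x)=y$; this is well defined precisely when $x\mapsto \int_G f\,d\mu^x$ descends through $q_{\mathcal F}$ for every $f\in B_{\mathcal F}$, and the candidate cocycle $\sigma_{\mathcal F}$ with $q_{\mathcal F}^*\sigma_{\mathcal F}=\sigma$ exists precisely when $\sigma$ is constant on the fibres of $q_{\mathcal F}\times q_{\mathcal F}$ over $\G2$. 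Both are \emph{closure conditions} of the same flavour as above. The essential point is then a \emph{closing-off} argument: starting from an arbitrary countable seed, I iterate, at each stage adjoining the countably many functions demanded by each of the finitely many closure requirements (the $*$-algebra and co-operation closures, the Haar functions $x\mapsto\int_G f\,d\mu^x$, and enough functions to separate the level sets of $\sigma$ visible to the current family); the countable union of the stages again lies in $A$. This simultaneously shows that $A$ is directed (take the union of two families and close off) and that every $f\in C_c(G)$ lies in some $B_{\mathcal F}$.

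It remains to identify the limit and invoke \cref{mainthm.limitexists}. For $\mathcal F\subseteq\mathcal G$ the inclusion $B_{\mathcal F}\hookrightarrow B_{\mathcal G}$ dualizes to a proper continuous surjective groupoid morphism $q^{\mathcal F}_{\mathcal G}\colon G_{\mathcal G}\to G_{\mathcal F}$, and one checks directly that these bonding maps are Haar system preserving and cocycle preserving and are compatible with the $q_{\mathcal F}$. Since $\bigcup_{\mathcal F}B_{\mathcal F}$ is a $*$-subalgebra of $C_0(G)$ containing $C_c(G)$, its directed union is all of $C_0(G)$, so $\varinjlim_{\mathcal F}B_{\mathcal F}=C_0(G)$; passing to spectra gives $\varprojlim_{\mathcal F}G_{\mathcal F}=G$ as locally compact Hausdorff spaces, and the compatibility of the congruences upgrades this to an isomorphism of topological groupoids. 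The induced Haar system and cocycle on the limit agree with $\{\mu^x\}$ and $\sigma$ because they are determined fibrewise by the pushforwards and pullbacks above. Thus $\{(G_{\mathcal F},\sigma_{\mathcal F}),q^{\mathcal F}_{\mathcal G},A\}$ is an inverse system of the type considered in \cref{mainthm.limitexists} whose limit is $(G,\sigma)$; that theorem (equivalently, the functoriality in \cref{morphisms}) then yields the directed system $\{C_c(G_{\mathcal F},\sigma_{\mathcal F}),(q^{\mathcal F}_{\mathcal G})^*,A\}$ of topological $*$-algebras with $C_c(G,\sigma)=\varinjlim_{\mathcal F}C_c(G_{\mathcal F},\sigma_{\mathcal F})$ --- using that each $f\in C_c(G)$ descends to a compactly supported function on some $G_{\mathcal F}$ via the proper surjection $q_{\mathcal F}$ --- together with the extension to the maximal completions by the universal property of the maximal $C^*$-norm.

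I expect the main obstacle to be the closing-off step insofar as it must produce a genuine \emph{groupoid} quotient: one must verify that the partial multiplication descends to a continuous map $G_{\mathcal F}^{(2)}\to G_{\mathcal F}$ and that $G_{\mathcal F}^{(2)}$ is exactly the fibre product for the descended source and range maps, all while keeping $B_{\mathcal F}$ separable. The delicate ingredient is controlling the comultiplication, which takes values in a balanced tensor product over $C_0(\G0)$ rather than in $B_{\mathcal F}\otimes B_{\mathcal F}$; showing that this co-operation can be accommodated inside a countable family, so that composability and continuity of multiplication pass to the quotient, is where the real work lies. Checking that the pushforward measures again constitute a Haar system on $G_{\mathcal F}$ compatible with the descended multiplication is the other point requiring care, and is exactly where the defining properties of a Haar system preserving morphism from \cref{morphisms} must be verified.
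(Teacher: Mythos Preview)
Your strategy---indexing by countable separable $*$-subalgebras of $C_0(G)$ closed under the dual co-operations, and appealing to Gelfand duality---is genuinely different from the paper's route, which works entirely on the cover side: it builds \emph{groupoid normal sequences} of open covers (\cref{Coverings}) satisfying a list of compatibility conditions, takes the associated pseudo-metric quotients, and indexes the inverse system by such normal sequences ordered by cofinal refinement (\cref{sexyapproximationtheorem}). Your approach is more functional-analytic and would, if it went through, be somewhat more conceptual; the paper's is more hands-on topological and makes the groupoid structure on each quotient explicit.

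The gap is exactly the point you flag at the end, and it is not merely technical. Closure of $B_{\mathcal F}$ under $f\mapsto f\circ\iota$, $f\mapsto f\circ s$, $f\mapsto f\circ r$, and comultiplication ensures only that the operations \emph{descend} to $G_{\mathcal F}$; it does \emph{not} ensure that composability in $G_{\mathcal F}$ lifts to composability in $G$, i.e., that $q_{\mathcal F}\times q_{\mathcal F}$ maps $\G2$ \emph{onto} $G_{\mathcal F}^{(2)}$. The paper's \cref{example.needsintersectionrefinement} shows concretely how this fails: one can have $[s(g)]=[t(h)]$ in the quotient with no composable representatives, so $G_{\mathcal F}$ is only an inverse semigroupoid. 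Nothing in a L\"owenheim--Skolem closing-off over \emph{functions} obviously repairs this, because the obstruction is not that some function fails to factor but that two disjoint compact subsets $s([g])$ and $t([h])$ of $\G0$ collapse to the same point---and there are uncountably many such pairs to separate. The paper's mechanism for this is condition~\ref{intersectionseparatingcondition} of \cref{Coverings} (the \emph{intersection separating refinement}), a combinatorial condition on the covers with no evident counterpart in the subalgebra picture; the argument that it suffices is the nontrivial Claim in the proof of \cref{quotientconstruction}. To complete your approach you would need to identify a closure condition on $B_{\mathcal F}$ equivalent to this, or else revert to covers at that step---at which point you are essentially redoing the paper's construction.
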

In the above theorem, some of the properties of $G$ can be passed on to the approximation groupoids $G_\alpha$; for example, if $G$ is \'etale or transitive, the construction can be modified such that the same is true for all $G_\alpha$ (see  \cref{section.approximationproperties}).  The reason we did not state any results for the case of reduced groupoid $C^*$-algebras is because the reduced norms of the approximations are not necessarily compatible, not even with the reduced norm on $C_c(G)$.  This is an issue even in the case of groups, as can be seen from \cite{BKKO}.

The strategy to prove \cref{mainthm.approximation} is based on a classic uniform space theory construction: all uniform structures on a set $X$ are inverse limits of pseudo-metric uniform structures on $X$. The idea with this approximation technique is to start with a uniform cover $\mathcal{U}$ of $X$, construct a ``minimal'' uniform substructure on $X$ that contains $\mathcal{U}$, and throw away all other uniform covers. The basics of this construction are described in \cref{topapprox}.  In our case, we start with a uniform cover and show that one can construct ``minimal'' uniform structures on the underlying set of the groupoid $G$ such that the resulting metrizable space will be locally compact and Hausdorff and can be endowed with a groupoid structure.  We have to further sharpen our approximations if we want to also account for Haar systems and groupoid 2-cocycles. 

The interested reader might compare our construction for the proof of \cref{mainthm.approximation} to the groupoid approximations constructed for \'etale groupoids by Censor and Markiewicz in \cite{CM}; one notable difference is that our approximations are deconstructive whereas the approximations in their paper are constructive.  One novelty about our approximations is that our groupoid quotients (algebraic quotients) allow us to deform the object space and the space of arrows simultaneously.

As an application of our approximation construction in \cref{mainthm.approximation}, we show how to easily extend the maximal version of Renault's Equivalence Theorem. The notion of equivalence for groupoids was introduced  and was shown to be connected to  Morita equivalence of maximal groupoid $C^*$-algebras by J. Renault in \cite{Renault1} and soon thereafter by P. Muhly, J. Renault, and D. Williams in \cite{MRW}.  In \cref{renaultequivalencetheorem}, we prove the following theorem:

\begin{Theorem}[Equivalence Theorem]\label{equivalencetheorem}
Let $G$ and $H$ be $\sigma$-compact groupoids with Haar systems of measures. If $G$ and $H$ are equivalent then $C^*(G)$  is Morita equivalent to $C^*(H)$.
\end{Theorem}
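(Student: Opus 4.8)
The plan is to reduce the $\sigma$-compact case to the known second countable Equivalence Theorem of Muhly--Renault--Williams \cite{MRW} by means of the approximation furnished by \cref{mainthm.approximation}, organizing everything through the \emph{linking groupoid} so that Morita equivalence is transparent and propagates through the direct limit. Recall that a $(G,H)$-equivalence is a locally compact Hausdorff space $Z$ carrying commuting free and proper left $G$- and right $H$-actions whose (open) momentum maps induce homeomorphisms $Z/H\isom \G{0}$ and $G\backslash Z\isom \gpdH{0}$. From such a $Z$ one forms the linking groupoid $L=L(G,Z,H)$: its unit space is the disjoint union $\gpdL{0}=\G{0}\sqcup \gpdH{0}$, its reduction $\restrict{L}{\G{0}}$ is $G$, its reduction $\restrict{L}{\gpdH{0}}$ is $H$, and the two off-diagonal blocks are $Z$ and its opposite. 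Combining the Haar systems of $G$ and $H$ with the families of measures on $Z$ coming from the equivalence (as in \cite{MRW}) equips $L$ with a Haar system, and $L$ inherits local compactness, Hausdorffness and $\sigma$-compactness from $G$, $H$, and $Z$.

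First I would record the corner reformulation. Since $\G{0}$ is clopen in $\gpdL{0}$, its indicator function is a projection $p\in M(C^*(L))$, and because the reduction of $L$ to $\G{0}$ is exactly $G$ one gets $pC^*(L)p\isom C^*(G)$; symmetrically $qC^*(L)q\isom C^*(H)$ for the complementary projection $q=1-p$. The hypothesis that $Z$ is an equivalence is precisely what guarantees that each clopen block meets every $L$-orbit, i.e.\ that $p$ and $q$ are \emph{full}. Thus $C^*(G)$ and $C^*(H)$ are complementary full corners of $C^*(L)$, hence Morita equivalent. This reorganizes the second countable theorem so that the imprimitivity bimodule $pC^*(L)q$ is produced automatically rather than by hand, and it is the statement whose propagation to the limit I will control.

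Next I would apply \cref{mainthm.approximation} to the $\sigma$-compact groupoid $L$ (with trivial cocycle), obtaining $L=\varprojlim_\alpha L_\alpha$ with second countable bonding groupoids and $C^*(L)=\varinjlim_\alpha C^*(L_\alpha)$. The point is to run the construction so that each $L_\alpha$ is again a linking groupoid. Because $\G{0}$ and $\gpdH{0}$ are clopen in $\gpdL{0}$, one may take the initial uniform cover, and all covers appearing in \cref{mainthm.approximation}, to be subordinate to the partition $\gpdL{0}=\G{0}\sqcup\gpdH{0}$; the algebraic quotients then never mix the two blocks, so $L_\alpha$ has unit space $G_\alpha^{(0)}\sqcup H_\alpha^{(0)}$, reductions $G_\alpha$ and $H_\alpha$ equal to the induced second countable approximations of $G$ and $H$, and off-diagonal block an approximation $Z_\alpha$ of $Z$. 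Using the flexibility in the construction (as in the preservation of structural properties discussed in \cref{section.approximationproperties}), I would arrange that the induced $G_\alpha$- and $H_\alpha$-actions on $Z_\alpha$ remain free and proper, so that $Z_\alpha$ is a $(G_\alpha,H_\alpha)$-equivalence. Each $L_\alpha$ is second countable, so \cite{MRW} applies: the clopen indicator projections $p_\alpha,q_\alpha\in M(C^*(L_\alpha))$ are full, with $p_\alpha C^*(L_\alpha)p_\alpha\isom C^*(G_\alpha)$ and $q_\alpha C^*(L_\alpha)q_\alpha\isom C^*(H_\alpha)$.

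Finally I would pass to the limit. Since the bonding maps respect the clopen decomposition of the unit space, the projections $p_\alpha$ are compatible with the connecting $*$-homomorphisms of the direct system and assemble to a projection $p\in M(C^*(L))$; cutting by a compatible projection commutes with the direct limit, so $pC^*(L)p=\varinjlim_\alpha p_\alpha C^*(L_\alpha)p_\alpha=\varinjlim_\alpha C^*(G_\alpha)=C^*(G)$, with the analogous identity for $q$ and $C^*(H)$. Fullness is preserved under direct limits of this form, so $p$ and $q$ are full in $C^*(L)$, and therefore $C^*(G)$ and $C^*(H)$ are complementary full corners of $C^*(L)$, hence Morita equivalent. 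The main obstacle, and the step deserving the most care, is exactly the middle one: ensuring that subordinating the uniform covers to the clopen partition is compatible with the remaining data, so that the inherited families of measures on $Z_\alpha$ together with the freeness and properness of the $G_\alpha$- and $H_\alpha$-actions genuinely satisfy the axioms of a groupoid equivalence at each stage. Everything else is the bookkeeping of reductions to clopen subsets of the unit space, which commutes with the direct limit.
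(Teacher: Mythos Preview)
Your proposal is correct and follows essentially the same strategy as the paper: approximate the linking groupoid $L$ by second countable linking groupoids $L_\alpha$ (subordinating covers to the clopen partition $\G0\sqcup\gpdH0$), invoke the second countable Equivalence Theorem at each stage, and pass fullness through the direct limit $C^*(L)=\varinjlim_\alpha C^*(L_\alpha)$. The only difference is cosmetic---the paper checks fullness of the inner products on the bimodule $pC^*(L)q$ rather than fullness of the corner projections---and the paper sidesteps your stated ``main obstacle'' entirely: once the covers respect the clopen blocks, surjectivity of $q_\alpha$ immediately makes $\G0_\alpha$ and $\gpdH0_\alpha$ full clopen subsets of $\gpdL0_\alpha$, so $L_\alpha$ is a linking groupoid by definition (\cref{approximatethatlink}) and there is no need to verify freeness and properness of the $G_\alpha$- and $H_\alpha$-actions on $Z_\alpha$ directly.
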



The purpose of including a proof for the above is to demonstrate how our approximation technique might be applied to extend known results from second countable to $\sigma$-compact groupoids. It came to our attention after we finished this work that the equivalence theorem is indeed true in full generality and was proven concurently and independently by A. Buss, R. Holkar, and  R. Meyer in \cite{BHM} using their generalization of the disintegration theorem of Renault.

\sn
\textbf{Acknowledgements}   The authors would like to extend their gratitude to Bill Chen, Adam Dor-On, Saak Gabrielyan,  Claire Anantharaman-Delaroche, Shirly Geffen, Jean Renault, and Dana Williams for their feedback and suggestions during the process of writing this paper.  We would like to offer special thanks to Joav Orovitz for his tremendous help throughout the project. This project would not exist without him. 

The first author would also like to thank  Michael Levin for all his helpful conversations and useful advice throughout the duration of this project.

\section{Preliminaries on Groupoids  and Their \texorpdfstring{$C^*$-Algebras}{C*-Algebras}} \label{section.groupoidsintro}

We use all the conventions and notations (except for second countability assumptions) as the survey paper  \cite{Buneci} by M. Buneci; we highlight the most important for ease of reference.  A \textbf{groupoid} is a small category in which every morphism is invertible. A groupoid $G$ is a \textbf{topological groupoid} if $\G1$ is equipped with a locally compact and Hausdorff topology for which the  inverse and partial multiplication functions are continuous.  We denote the partial multiplication map by $m$, and usually write $gh$ for $m(g, h)$; we use function composition order when composing arrows (i.e $gh$ defined if and only if $s(g) = t(h)$).  The object space $\G0 \subset \G1$ is given the subspace topology; then the source and target maps, $s(z) = z^{-1}z$ and $t(z) = zz^{-1}$ respectively, are continuous.  For $n\ge2$, the set $\Gn$ of composable $n$-tuples is given the subspace topology induced by the product topology on $G^n = G \times G \times \ldots \times G$ ($n$ times).

\begin{Definition}\label{morphism}
A \textbf{morphism of topological groupoids} is a continuous groupoid morphism (functor).
\end{Definition}

\begin{Definition}\label{opendef}
A topological groupoid is said to be \textbf{$\sigma$-compact} (resp. \textbf{paracompact}) if $\G1$ and hence\footnote{Recall that $\G0$ is a closed subset of $\G1$ and that paracompactness and $\sigma$-compactness are weakly hereditary properties.} $\G0$ have $\sigma$-compact (resp. paracompact) topologies.
\end{Definition}

\begin{Remark}
One can show that $\G2$ is a closed set in $G \times G$ (using the Hausdorff property of $\G0$ and continuity of $s$ and $t$). Therefore, if $\G1$ is $\sigma$-compact (resp. paracompact), then so is $\G2$.
\end{Remark}

\begin{Notation}
Let $G$ be a groupoid and let $x\in \G0$. We define $G^x$ to be the collection of arrows in $G$ that have target $x$, i.e. all $g\in G$ with $t(g) = x$. 
\end{Notation}

\begin{Definition}\label{Haarysituation}
Let $G$ be a topological groupoid. A \textbf{Haar system of measures on $G$} is a collection $\{\mu^x:x\in \G0\}$ of positive regular Radon measures on $G$ such that: 
\begin{enumerate}
\item $\mu^x$ is supported on $G^x$.
\item  For each $f\in C_c(G)$, the function $x\to \int_{G}f(g)d\mu^x(g)$ is continuous on $\G0$.
\item For all $g\in \G1$ and $f\in C_c(G)$, the following equality holds: 
$$\int_{G}f(h)d\mu^{t(g)}(h) = \int_{G}f(gh)d\mu^{s(g)}(h).$$
\end{enumerate}
\end{Definition}

Note that if $K$ is a compact set of arrows, the set of values its measure can take across the whole Haar system is bounded.  This simple observation will be needed in proving continuity results later, so we state it below as a lemma; the proof is straight-forward and hence omitted.

\begin{noproofLemma}\label{compactmaxmeasure} If $K \subset \G1$ is a compact set, then $\sup_{x\in \G0} \mu^x(K) < \infty$.
\end{noproofLemma}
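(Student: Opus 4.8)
The plan is to dominate the indicator function of $K$ by a single compactly supported continuous function and then exploit property (2) of \cref{Haarysituation} together with the support condition (1). The one subtlety worth flagging in advance is that $\G0$ need not be compact, so continuity of the map $x \mapsto \int_G f\,d\mu^x$ by itself yields only \emph{local} boundedness; it is the support condition that lets me reduce to a compact subset of $\G0$, after which boundedness is automatic.

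First I would invoke local compactness and the Hausdorff property of $\G1$ to produce, via Urysohn's lemma, a function $f \in C_c(G)$ with $0 \le f \le 1$ and $f \equiv 1$ on $K$. Then the pointwise inequality $\mathbf{1}_K \le f$ gives, for every $x \in \G0$,
\[
\mu^x(K) \;=\; \int_G \mathbf{1}_K \, d\mu^x \;\le\; \int_G f \, d\mu^x \;=:\; F(x).
\]
By property (2) of \cref{Haarysituation}, the function $F$ is continuous on $\G0$.

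Next I would use the support axiom to cut down the index set. Since $\mu^x$ is supported on $G^x$, for any $x \notin t(K)$ we have $K \cap G^x = \emptyset$, and hence $\mu^x(K) = \mu^x(K \cap G^x) = 0$. Therefore $\sup_{x \in \G0} \mu^x(K) = \sup_{x \in t(K)} \mu^x(K) \le \sup_{x \in t(K)} F(x)$. Because $t$ is continuous, $t(K)$ is a compact subset of $\G0$, and a continuous function attains a finite maximum on a compact set, so $\sup_{x \in t(K)} F(x) < \infty$, which completes the argument.

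The only step that genuinely requires care is the reduction to $t(K)$: without it, the desired uniform bound could fail over a noncompact $\G0$, and the continuity axiom alone would be insufficient. Beyond that I anticipate no real obstacle, since the combination of a dominating bump function, the continuity axiom, and the support axiom makes the estimate routine, which is consistent with the authors' remark that the proof is straightforward.
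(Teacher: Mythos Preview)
Your proof is correct and is exactly the standard argument the authors have in mind; the paper itself omits the proof, merely noting that it is straightforward. The key points---dominating $\mathbf{1}_K$ by a bump function, invoking continuity of $x\mapsto\int f\,d\mu^x$, and reducing to the compact set $t(K)$ via the support condition---are precisely the ingredients one would expect.
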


\begin{Definition}\label{defcocycle}
 A 2-cocycle is a map $\sigma : \G2 \to \TT$ such that whenever $(g, h), (h, k) \in \G2$ we have
$$\sigma(g, h)\sigma(gh, k) = \sigma(g, hk)\sigma(h, k).$$
\end{Definition}

The map $\sigma(g, h) = 1$ for all $(g, h) \in \G2$ is always a 2-cocycle, called the \textbf{trivial cocycle}; the set of 2-cocycles on $G$ form a group under pointwise multiplication and pointwise inverse. 

\begin{Definition}
If $(G,\sigma_G)$ and  $(H,\sigma_H)$ are  locally compact groupoids with $2$-cocycles,  a morphism $q:(G,\sigma_G) \to (H, \sigma_H)$ is said to be \textbf{cocycle preserving} if it is a proper morphism of groupoids such that $\sigma_G(g,h) = \sigma_H(q(g),q(h))$. 
\end{Definition}

\begin{Definition}\label{approximate}
Let $G$ be a topological groupoid. An inverse system $\{G_\alpha, q^\alpha_\beta:G_\alpha \to G_\beta\}_{\alpha \in A}$ of topological groupoids with proper and surjective morphisms $q^\alpha_\beta$ and with directed indexing set $A$ is said to be an \textbf{inverse approximation of $G$} if 
\begin{enumerate}
\item $q^\alpha_\alpha = id_{G_\alpha}$ for all $\alpha$,
\item for each $\alpha \ge \beta \in A$ there exists $q^\alpha_\beta:G_\alpha \to G_\beta$ and, moreover, $q^\alpha_\beta \circ q^\beta_\gamma = q^\alpha_\gamma$ whenever $\alpha \ge \beta \ge \gamma$, and
\item $\varprojlim_\alpha G_\alpha = G$ in the category of topological groupoids (with proper continuous groupoid homomorphisms).  We denote the canonical projections from $G$ to the inverse system by $q_\alpha:G\to G_\alpha$. 
\end{enumerate}
\end{Definition}

If $(G, \sigma)$ is a topological groupoid with Haar system $\{\mu^x:x\in \G0\}$, we let $C_c(G,\sigma)$ denote the collection of compactly supported continuous complex valued  functions on $G$. With the following multiplication, adjoint operation, and norm $\|\cdot\|_I$, $C_c(G,\sigma)$ becomes a topological *-algebra. \label{ccdefinition}

$$ f*g(x) = \int_{G} f(xy)g(y^{-1})\sigma(xy,y^{-1})\,d\mu^{s(x)}(y) $$ 

$$ f^*(x) = \overline{f(x^{-1})\sigma(x,x^{-1})}$$

$$\|f\|_I = \max \left\{\sup_{x \in \G0} \int_{G} |f(g)|\,d\mu^x(g), \sup_{x \in \G0} \int_{G} |f(g^{-1})|\,d\mu^x(g)\right\}.$$

The\footnote{The designation "The" here is actually too strong, since in general Haar systems on groupoids are not unique. For our purposes, $G$ comes with a fixed Haar system, so we do not make a note of the Haar system in our notation. However, one should keep in mind that different Haar systems can give rise to different convolution algebras.  It is known that, in the second countable case, these algebras are all Morita equivalent (see section 4 of \cite{Buneci}).} \textbf{maximal twisted groupoid $C^*$-algebra of $G$}, denoted by $C^*(G,\sigma)$, is defined to be the completion of $C_c(G, \sigma)$ with the following norm
$$\|f\|_{max} = \sup_{\pi} \|\pi(f)\|,$$
where $\pi$ runs over all continuous (with respect to the norm $\|\cdot\|_I$) *-representations of $C_c(G, \sigma)$.  The maximal groupoid $C^*$-algebra of $G$ is obtained via the same construction when $\sigma$ is the trivial cocycle.



\section{Pushing Haar Systems to Quotients and \texorpdfstring{\\}{} Induced Morphisms of \texorpdfstring{$C^*$}{C*}-algebras}\label{morphisms}

As an introduction to the ideas of this section, consider first the case when $G$ and $H$ are locally compact \emph{groups}.  Suppose $\mu_G$ is a Haar measure on $G$, and $\phi:G\to H$ is a proper and surjective topological group homomorphism. It is well-known that $\phi_* (\mu_G) := \mu_G \circ \phi^{-1}$ is a Haar measure on $H$; we will denote this measure by $\mu_H$.  The usual pullback of $\phi$ induces a $\CC$-module  homomorphism $\phi^*: C_c(H)\to C_c(G)$. We claim that it is a $*$-algebra morphism.  Let $f_1, f_2 \in C_c(H)$ and $y \in G$. We define $F_1, F_2 \in C_c(H)$ by $F_1(h) = f_1(\phi(y)h)$ and $F_2(h) = f_2(h^{-1})$ for $h \in H$.   We have

\begin{align*}
(\phi^*(f_1) \ast \phi^*(f_2))(y) &= \int_G f_1(\phi(yx))f_2(\phi(x^{-1}))\,d\mu_G(x) \\
&= \int_G (F_1 \cdot F_2)(\phi(x))\,d\mu_G(x) \\
&= \int_H (F_1 \cdot F_2)(z)\,d\mu_H(z) \\
&= \int_H f_1(\phi(y)z)f_2(z^{-1})\,d\mu_H(z) \\
&= (f_1 * f_2)(\phi(y)) = (\phi^*(f_1 \ast f_2))(y).
\end{align*}
An even easier computation shows that $\phi^*$ preserves the adjoint and hence is a *-morphism. 

The main theme of this paper is the approximation of topological groupoids by their images under proper surjective morphisms, allowing us to approximate groupoid $C^*$-algebras by subalgebras, namely by the groupoid $C^*$-algebras of the approximation groupoids. It is not clear that if $G$ is a topological groupoid with Haar system then such an image of $G$ (under a proper surjective morphism) will have a Haar system. Even if it does admit a Haar system, it does not seem obvious to the authors that the pullback map should induce a *-morphism of convolution algebras. It is the purpose of this section to establish a criterion for (proper) morphisms $G\to H$ of groupoids such that 

\begin{enumerate}
\item The pullback map is a *-morphism from $C_c(H)$ to $C_c(G)$ (endowed with the algebra operations and norms described on p. \pageref{ccdefinition}).
\item A Haar system on $G$ passes to a Haar system on $H$.
\end{enumerate}

We first establish a criterion for when morphisms of groupoids with Haar systems of measures induce *-morphisms of convolution algebras.

\begin{Definition}\label{haarpreserving}
Let $G$ and $H$ be locally compact groupoids with Haar systems of measures $\{\mu^x:x\in \G0\}$ and $\{\nu^y:y\in H^{(0)}\}$.  A groupoid morphism $q:G\to H$ is said to be \textbf{Haar system preserving} if $q$ is proper and satisfies either of the following two equivalent (see \cref{pushingforward}) conditions:

\begin{enumerate}
    \item  For all $z\in H^{(0)}$ and for all $x \in q^{-1}(z)$, we have that  $q_*\mu^x = \nu^z$.
    \item  For all $f\in C_c(H)$, for all $z\in H^{(0)}$ and for all $x \in q^{-1}(z)$ we have $\int_G (f \circ q) \, d\mu^x = \int_H f d\nu^z$.
\end{enumerate}
\end{Definition}

\cref{haarpreserving} gives a large class of  morphisms of groupoids that induce $*$-morphisms of groupoid $C^*$-algebras, but does not cover all possibilities. For example, the inclusion  $\ZZ \hookrightarrow \ZZ \times \ZZ/2\ZZ$ does induce a $*$-morphism of convolution algebras (the restriction morphism)  but is not Haar system preserving. In \cite{AM}, the first author along with A. Mitra add the required flexibility to cover this case by considering partial morphisms.

In analogy with the situation for groups, proper groupoid morphisms which are Haar system preserving induce *-morphisms of the convolution algebras:

\begin{Proposition}\label{inducedmorphism}
Let $q:(G,\sigma_G)\to (H,\sigma_H)$ be a cocycle and Haar system preserving morphism of locally compact Hausdorff groupoids with Haar systems $\{\mu^x:x\in \G0\}$ and $\{\nu^y:y\in H^{(0)}\}$, respectively. The pullback map $q^*:C_c(H,\sigma_H)\to C_c(G,\sigma_G)$ is a *-morphism of topological *-algebras.  If additionally $q$ is surjective, then $q^*$ is I-norm preserving. 
\end{Proposition}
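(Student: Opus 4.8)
The plan is to mirror the group-level computation already carried out at the start of this section, tracking the source and target maps carefully and invoking the Haar system preserving hypothesis in the pushforward form given by condition (1) of \cref{haarpreserving}. Before verifying any algebraic identity I would first check that $q^*$ is well defined as a map $C_c(H,\sigma_H) \to C_c(G,\sigma_G)$: if $f \in C_c(H,\sigma_H)$ then $f \circ q$ is continuous, and since $\supp(f\circ q) \subseteq q^{-1}(\supp f)$ with $q$ proper, the set $q^{-1}(\supp f)$ is compact, so $f \circ q$ has compact support. Thus $q^*(f) = f \circ q$ lands in $C_c(G,\sigma_G)$.

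For the convolution, fix $f_1, f_2 \in C_c(H,\sigma_H)$ and $y \in G$ and expand
$$(q^*f_1 * q^*f_2)(y) = \int_G f_1(q(yx))\, f_2(q(x^{-1}))\, \sigma_G(yx, x^{-1})\, d\mu^{s(y)}(x).$$
Because $q$ is a functor, $q(yx) = q(y)q(x)$ and $q(x^{-1}) = q(x)^{-1}$, and because $q$ is cocycle preserving, $\sigma_G(yx,x^{-1}) = \sigma_H(q(y)q(x), q(x)^{-1})$. Hence the integrand equals $(F \circ q)(x)$, where $F \colon H \to \CC$ is given by $F(h) = f_1(q(y)h)\, f_2(h^{-1})\, \sigma_H(q(y)h, h^{-1})$. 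Since $\mu^{s(y)}$ is supported on $G^{s(y)}$ and $q$ carries $G^{s(y)}$ into $H^{s(q(y))}$ (using that $q$ preserves source maps, $q(s(y)) = s(q(y))$), I can apply the pushforward identity $q_*\mu^{s(y)} = \nu^{s(q(y))}$—valid because $s(y) \in q^{-1}(s(q(y)))$—to the bounded, compactly supported Borel function $F$, obtaining
$$\int_G (F \circ q)\, d\mu^{s(y)} = \int_H F\, d\nu^{s(q(y))} = (f_1 * f_2)(q(y)) = q^*(f_1 * f_2)(y).$$
Working with the pushforward formulation rather than condition (2) of \cref{haarpreserving} sidesteps the question of whether $F$ is continuous when $\sigma_H$ is merely Borel, since the change-of-variables identity $\int_H \Phi\, d\nu^z = \int_G (\Phi \circ q)\, d\mu^x$ holds for every bounded Borel $\Phi$ as soon as $q_*\mu^x = \nu^z$.

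The adjoint is immediate: for $x \in G$ we have $(x, x^{-1}) \in \G2$ (since $t(x^{-1}) = s(x)$), so functoriality and cocycle preservation give $\sigma_G(x, x^{-1}) = \sigma_H(q(x), q(x)^{-1})$, and substituting yields $(q^*f)^*(x) = \overline{f(q(x)^{-1})\,\sigma_H(q(x),q(x)^{-1})} = q^*(f^*)(x)$. For the $I$-norm, the same pushforward argument applied to $|f|$ and to $h \mapsto |f(h^{-1})|$ gives, for each $x \in \G0$, the identities $\int_G |q^*f(g)|\, d\mu^x(g) = \int_H |f|\, d\nu^{q(x)}$ and $\int_G |q^*f(g^{-1})|\, d\mu^x(g) = \int_H |f(h^{-1})|\, d\nu^{q(x)}(h)$; taking suprema over $x \in \G0$ yields $\|q^*f\|_I = \max\{\sup_{z\in q(\G0)}\int_H |f|\,d\nu^z,\ \sup_{z\in q(\G0)}\int_H |f(h^{-1})|\,d\nu^z(h)\} \le \|f\|_I$, which already shows that $q^*$ is $I$-norm bounded and hence a morphism of topological $*$-algebras. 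When $q$ is surjective on arrows it is also surjective on objects—for $z \in H^{(0)}$ choose $g$ with $q(g) = z$, then $q(s(g)) = s(q(g)) = s(z) = z$ with $s(g) \in \G0$—so $q(\G0) = H^{(0)}$ and the inequality becomes the equality $\|q^*f\|_I = \|f\|_I$.

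I expect the only genuine subtlety to be the bookkeeping in the convolution step: one must confirm that the fiber measure appearing in $(q^*f_1 * q^*f_2)(y)$ is $\mu^{s(y)}$ and that its pushforward is precisely the fiber measure $\nu^{s(q(y))}$ over which $(f_1 * f_2)(q(y))$ is integrated. This is exactly where compatibility of $q$ with the source maps and the defining property of a Haar system preserving morphism combine. Everything else is a direct transcription of the group-level computation given earlier in this section, with the cocycle factors carried along unchanged by cocycle preservation.
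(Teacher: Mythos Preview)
Your proof is correct and follows essentially the same approach as the paper, which explicitly omits the convolution and adjoint computations as ``similar to those for group morphisms discussed at the beginning of this section'' and only sketches the $I$-norm inequality. You have supplied exactly those omitted details, with the added care of verifying that $q^*$ lands in $C_c$, handling the possibly non-continuous cocycle via the pushforward formulation, and observing that surjectivity on arrows forces surjectivity on objects---all of which the paper leaves implicit.
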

\begin{proof}
%
%
%
The calculations to show that $q^*$ respects convolution and adjoints are similar to those for group morphisms discussed at the beginning of this section, and are thus omitted.  We check that $q^*$ is continuous (and an isometry when $q$ is surjective).  Since $q$ is Haar system preserving, by definition, if $f\in C_c(H)$ and $z \in H$, then $\int_H |f| \,d\nu^z = \int_G |f \circ q| \, d\mu^x$ for all $x \in q^{-1}(z)$. From the definition of I-norm (see p. \pageref{ccdefinition}), it thus follows easily that the I-norm of $q^*f$ in $C_c(G, \sigma_G)$ is less than or equal to the I-norm of $f$ in $C_c(H, \sigma_H)$, with equality if $q$ is surjective.
\end{proof}

\begin{Fact}\label{pushingforward}
We give here a short proof of the fact that if $X$ and $Y$ are locally compact Hausdorff spaces and $f:X\to Y$ is a proper continuous function, then the pushforward of a regular Radon measure on $X$ is a regular Radon measure on $Y$.  It is easy to check inner regularity and local finiteness of the pushforward measure. To prove outer regularity, use the fact that proper maps to locally compact spaces are closed.  Then one can show that, for every $B\subset Y$ and every open neighborhood $U$ of $f^{-1}(B)$, the set $V = (f^{-1}(f(U^c)))^c$ is an open and saturated\,\footnote{Recall that a subset $A\subset X$ is saturated with respect to $f$ provided that $f^{-1}(f(A)) = A$.} neighborhood of $f^{-1}(B)$ such that $V \subset U$. Saturated open sets get mapped to open sets by closed maps, completing the proof.  

This observation about the pushforward of a regular Radon measure, along with the Riesz-Markov-Kakutani Theorem, gives us the equivalence of the two conditions presented in \cref{haarpreserving}.
\end{Fact}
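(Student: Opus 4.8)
The plan is to write $\nu := f_*\mu$ for the pushforward, defined on Borel sets $B \subseteq Y$ by $\nu(B) = \mu(f^{-1}(B))$. Since $f$ is continuous, preimages of Borel sets are Borel, and since preimages commute with complements and disjoint unions, $\nu$ is a well-defined Borel measure. There are then three properties to verify: local finiteness, inner regularity on open sets, and outer regularity on all Borel sets.

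First I would dispose of the two easy properties. Local finiteness is immediate from properness: if $K \subseteq Y$ is compact then $f^{-1}(K)$ is compact, so $\nu(K) = \mu(f^{-1}(K)) < \infty$. For inner regularity of $\nu$ on an open set $U \subseteq Y$, I would note that $f^{-1}(U)$ is open and apply inner regularity of $\mu$ there: any compact $C \subseteq f^{-1}(U)$ has $f(C)$ compact with $C \subseteq f^{-1}(f(C)) \subseteq f^{-1}(U)$ and $f(C) \subseteq U$, so $\mu(C) \le \mu(f^{-1}(f(C))) = \nu(f(C))$. Taking the supremum over such $C$ shows that $\nu(U) = \mu(f^{-1}(U))$ is approximated from below by $\nu$ of compact subsets of $U$, and monotonicity gives the reverse inequality.

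The main obstacle is outer regularity, precisely because outer regularity of $\mu$ supplies only an open neighborhood $U \supseteq f^{-1}(B)$ \emph{in $X$}, whereas I need an open neighborhood $W \supseteq B$ \emph{in $Y$} with $\nu(W) = \mu(f^{-1}(W))$ close to $\nu(B)$. To bridge this gap I would first record the topological input that a proper continuous map into a locally compact Hausdorff space is closed (via the standard argument: intersect a closed set with the preimage of a compact neighborhood of a limit point of its image). Given $\epsilon > 0$, choose by outer regularity of $\mu$ an open $U \supseteq f^{-1}(B)$ with $\mu(U) < \nu(B) + \epsilon$, and set $V := (f^{-1}(f(U^c)))^c$ and $W := Y \setminus f(U^c)$, so that $V = f^{-1}(W)$ is saturated. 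Because $f$ is closed, $f(U^c)$ is closed and hence $W$ is open, which is exactly the point where closedness of $f$ is used to turn a saturated open set into an honest open set of $Y$.

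Finally I would check the needed containments by elementary point-chasing. The inclusion $V \subseteq U$ holds because $x \in U^c$ forces $f(x) \in f(U^c)$, hence $x \notin V$; and both $f^{-1}(B) \subseteq V$ and $B \subseteq W$ follow from the single observation that any $x' \in U^c$ with $f(x') \in B$ would lie in $f^{-1}(B) \subseteq U$, a contradiction (so these hold without assuming $f$ surjective). Then $B \subseteq W$, $W$ is open, and $\nu(W) = \mu(V) \le \mu(U) < \nu(B) + \epsilon$; letting $\epsilon \to 0$ yields outer regularity and completes the proof. The asserted equivalence of the two conditions in \cref{haarpreserving} then follows by combining this pushforward fact with the Riesz-Markov-Kakutani representation theorem, identifying the measures $q_*\mu^x$ and $\nu^z$ through the linear functionals they induce on $C_c(H)$.
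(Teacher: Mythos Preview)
Your proposal is correct and follows exactly the route sketched in the paper: local finiteness and inner regularity are handled directly, and for outer regularity you use closedness of proper maps to show that the saturated shrinking $V = (f^{-1}(f(U^c)))^c = f^{-1}(W)$ of an outer-approximating open $U \supseteq f^{-1}(B)$ yields an open $W \supseteq B$ with $\nu(W) \le \mu(U)$. Your presentation in fact sharpens the paper's sketch slightly by working with $W = Y \setminus f(U^c)$ directly (so openness of $W$ is immediate from closedness of $f$) rather than phrasing it as ``images of saturated open sets under closed maps are open,'' which tacitly uses surjectivity.
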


Below, we discuss some conditions under which a Haar system of measures on a groupoid can be pushed to a quotient:
\begin{Proposition}\label{Haarquotient}
Let $G$ and $H$ be topological groupoids and let $q:G\to H$ be a proper surjective morphism.  Suppose moreover that $q$ is topologically a quotient map. If $G$ has a Haar system of measures $\{\mu^x:x\in \G0\}$ such that for all $f\in C_0(H)$, for all $z\in H^{(0)}$ and for all $x,y\in q^{-1}(z)$ we have 
$$\int_G (f \circ q) \, d\mu^x = \int_G (f \circ q) \, d\mu^y,$$
then $H$ admits a natural Haar system of measures $\{\nu^y:y\in H^{(0)}\}$ that makes $q$ Haar system preserving.
\end{Proposition}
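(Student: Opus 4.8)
The plan is to \emph{define} the measures on $H$ as pushforwards and then verify the three axioms of \cref{Haarysituation}. For $z \in H^{(0)}$ choose any $x \in q^{-1}(z)$ and set $\nu^z := q_*\mu^x$. Since $q$ is proper and continuous and both groupoids are locally compact Hausdorff, \cref{pushingforward} guarantees that $q_*\mu^x$ is a regular Radon measure; the hypothesis, which says $\int_H f\,d(q_*\mu^x) = \int_H f\,d(q_*\mu^y)$ for every $f \in C_0(H)$ and every $x,y \in q^{-1}(z)$, together with the Riesz--Markov--Kakutani theorem shows $q_*\mu^x = q_*\mu^y$, so $\nu^z$ is well defined independently of the choice of $x$. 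By construction $q_*\mu^x = \nu^{q(x)}$ for every $x \in \G0$, which is precisely condition (1) of \cref{haarpreserving}; hence once $\{\nu^z\}$ is shown to be a Haar system, $q$ is automatically Haar system preserving. It remains to check the three defining properties.

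Support and invariance are the routine steps. For support, note that $q$ is a functor, so $t\circ q = q \circ t$ and thus $q(G^x) \subseteq H^{q(x)}$; since $q$ is proper (hence closed, $H$ being locally compact Hausdorff) while $H^z = t^{-1}(z)$ is closed, the support of $\nu^z = q_*\mu^x$ lies in $H^z$. For invariance, fix $g \in H$ and $f \in C_c(H)$; using surjectivity pick $\tilde g \in G$ with $q(\tilde g) = g$, and put $x = s(\tilde g)$, $y = t(\tilde g)$, so that $q(x) = s(g)$ and $q(y) = t(g)$. Because $q$ is proper, $f\circ q \in C_c(G)$, so the Haar invariance of $\{\mu^x\}$ applied to $f\circ q$ and $\tilde g$ gives $\int_G (f\circ q)(h)\,d\mu^{y}(h) = \int_G (f\circ q)(\tilde g h)\,d\mu^{x}(h)$. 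Functoriality rewrites the integrand on the right as $f(g\,q(h))$, and a pushforward (change-of-variables) argument along $q\colon G^x \to H^{s(g)}$ converts both sides into the desired identity $\int_H f\,d\nu^{t(g)} = \int_H f(gh)\,d\nu^{s(g)}(h)$.

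The main obstacle is the continuity axiom. For $f \in C_c(H)$ the function $\Phi(x) := \int_G (f\circ q)\,d\mu^x$ is continuous on $\G0$, because $f\circ q \in C_c(G)$ and $\{\mu^x\}$ is a Haar system, and our hypothesis says precisely that $\Phi$ is constant on the fibers of $p := q|_{\G0}\colon \G0 \to H^{(0)}$, so it factors as $\Phi = \psi \circ p$ with $\psi(z) = \int_H f\,d\nu^z$. To conclude that $\psi$ is continuous I need $p$ to be a quotient map. This follows because $p$ is again proper (the preimage of a compact $K \subseteq H^{(0)}$ is $q^{-1}(K)\cap \G0$, compact as a closed subset of the compact set $q^{-1}(K)$) and surjective onto $H^{(0)}$ (any $z \in H^{(0)}$ equals $q(t(\tilde z))$ for a preimage $\tilde z$ of $z$), and a proper surjection between locally compact Hausdorff spaces is a closed surjection, hence a quotient map. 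Granting this, $\Phi = \psi \circ p$ continuous forces $\psi$ continuous, which is exactly axiom (2) for $\{\nu^z\}$. The only points requiring real care are this descent through the quotient map and the bookkeeping with supports in the pushforward argument for invariance; everything else is formal.
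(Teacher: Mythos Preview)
Your proof is correct and follows essentially the same approach as the paper: define $\nu^z$ as the pushforward $q_*\mu^x$, verify well-definedness via the hypothesis and \cref{pushingforward}, check left invariance by lifting an arrow along $q$, and deduce continuity by factoring the continuous function $x \mapsto \int_G (f\circ q)\,d\mu^x$ through the quotient. The only notable difference is in the continuity step: the paper directly invokes the hypothesis that $q$ is a quotient map, whereas you argue (correctly, and a bit more carefully) that the restriction $p = q|_{\G0}$ is itself a proper surjection between locally compact Hausdorff spaces and hence a quotient map in its own right; you also make the support verification explicit, which the paper omits.
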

\begin{proof}
For each $x \in H^{(0)}$ and each Borel subset $E\subset H$ we define $\nu^x(E) = q_*\mu^y(E)$ for any $y\in q^{-1}(x)$. By \cref{haarpreserving} and \cref{pushingforward}, the measure $\nu^x$ is well-defined and a regular Radon measure.  We check that $\{\nu^x : x \in \gpdH0\}$ is a Haar system; the fact that $q$ is then Haar system preserving follows immediately. 

Let $h\in H^{(1)}$, $g\in q^{-1}(h)$ and $f\in C_c(H)$. Notice
\begin{align*}
\int_H f(y) \, d\nu^{t(h)}(y) = \int_{H} f(y)\,d(q_* \mu^{t(g)})(y)  &= \int_{G} (q^*f)(y)\,d\mu^{t(g)}(y) \\
& =  \int_{G} (q^*f)(gy)\,d\mu^{s(g)}(y) \\
& = \int_{H} f(hy)\,d(q_*\mu^{s(g)})(y) = \int_H f(hy)\,d\nu^{s(h)}(y),
\end{align*}
and so the left invariance condition holds.

For the continuity of the Haar system, let $f\in C_c(H)$ and $\Omega \subset \CC$ be an open set. Let $V = \{x\in H^{(0)}: \int_{H} f(y) d\nu^x(y) \in \Omega\}$.  Since $q$ is proper, $f \circ q \in C_c(G)$; by the continuity of the Haar system on $G$, the set $W = \{z\in \G0: \int_G (f \circ q)(y) d\mu^z(y) \in \Omega \}$ is open in $\G0$.  The conditions on $q$ and the definitions of the measures $\nu^x$ ensure that $q^{-1}(V) = W$; since $q$ is a quotient map, $V$ must be open.  The continuity of the Haar system $\{\nu^x : x \in \gpdH0\}$ follows.
\end{proof} 

\subsection{Examples of Haar System Preserving Morphisms}

The concept of modeling $C^*$-algebras over topological spaces (i.e. preforming operations on a tensor factor of $C_0(X)$ for some locally compact Hausdorff space $X$) has had a tremendous impact to the theory of $C^*$-algebras, including in classification.  In \cite{ENST}, Elliot et al, as a stepping stone to proving that the decomposition rank of $\mathcal{Z}$-stable subhomogeneous $C^*$-algebras is at most $2$, prove that one can locally approximate any unital  subhomogenous $C^*$-algebra by noncommutative CW-complexes which, in the commutative case, have exactly the same topological dimension.
Jiang and Su in \cite{JS}, Razak in \cite{R}, Jacelon in \cite{J}, and Evans and Kishimoto in \cite{EK} successfully use interval algebras and the folding thereof to classify large classes of algebras and to create new examples of $C^*$-algebras. The following proposition is straightforward to prove and provides a powerful tool for modeling groupoids over topological spaces in an analogous way.

\begin{Proposition}\label{Exampleshaarmeasurepreserving}
Let $G$ be a topological groupoid with Haar system and let $f:X\to Y$ be a proper continuous function of locally compact Hausdorff spaces. The morphism $id_G\times f: G\times X\to G\times Y$ is Haar measure preserving (here, we take the Haar system to be $\mu^y \times \delta_x$ for $(x,y)\in \G0\times X$). 
\end{Proposition}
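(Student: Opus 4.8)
The plan is to verify directly the two requirements in the definition of a Haar system preserving morphism (\cref{haarpreserving}): that $q := \mathrm{id}_G \times f$ is a proper continuous groupoid morphism, and that it pushes the stated Haar system on $G \times X$ forward to the one on $G \times Y$. Throughout I view $X$ and $Y$ as trivial groupoids in which every point is a unit; writing a unit of $G \times X$ as $(u,a)$ with $u \in \G0$ and $a \in X$, the arrow $(g,a)$ has source $(s(g),a)$ and target $(t(g),a)$, so the arrows with target $(u,a)$ are precisely $G^u \times \{a\}$, which is exactly the support of $\mu^u \times \delta_a$.

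First I would dispatch the structural claims. Continuity and functoriality of $q$ follow at once from those of $\mathrm{id}_G$ and $f$ together with the product groupoid structure. Since $\mathrm{id}_G$ is proper and $f$ is proper by hypothesis, $q = \mathrm{id}_G \times f$ is proper: any compact $C \subseteq G \times Y$ lies in a product $K \times L$ of compacta, so $q^{-1}(C)$ is a closed subset of the compact set $K \times f^{-1}(L)$ and is therefore compact. If one wishes to check that $\{\mu^u \times \delta_a\}$ is genuinely a Haar system (rather than taking it as given), the support condition is noted above, left invariance reduces to left invariance of $\{\mu^u\}$ applied to the function $g \mapsto F(g,a)$ with $a$ fixed, and continuity of $(u,a) \mapsto \int_G F(g,a)\,d\mu^u(g)$ follows from continuity of the Haar system on $G$ together with the uniform bound of \cref{compactmaxmeasure} (used to control the $X$-variation of $F$ against the measures $\mu^u$); the same remarks apply verbatim to $G \times Y$.

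The substance of the argument is the pushforward identity. Fix a unit $(v,b) \in \G0 \times Y$ and any $(u,a) \in q^{-1}(v,b)$, so that $u = v$ and $f(a) = b$. Using that pushforward commutes with products of maps and that $f_*\delta_a = \delta_{f(a)}$ (cf. \cref{pushingforward}), I would compute
\[
q_*(\mu^u \times \delta_a) = (\mathrm{id}_G)_*\mu^u \times f_*\delta_a = \mu^u \times \delta_{f(a)} = \mu^v \times \delta_b,
\]
which is exactly the measure attached to the unit $(v,b)$ of $G \times Y$. This establishes condition (1) of \cref{haarpreserving}, so $q$ is Haar system preserving.

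I do not expect a genuine obstacle here; the lone point deserving care is the bookkeeping of the product groupoid's source and target maps, since pinning down the support of $\mu^u \times \delta_a$ is what makes the candidate measures a Haar system in the first place. Once $X$ and $Y$ are correctly treated as unit groupoids, every remaining step is a routine consequence of Fubini's theorem and the functoriality of the pushforward.
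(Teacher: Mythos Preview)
Your argument is correct and is precisely the kind of routine verification the paper has in mind: the paper does not give a proof at all, remarking only that the proposition ``is straightforward to prove'' before passing to examples. Your direct check---properness of $\mathrm{id}_G\times f$ via containment in a compact rectangle, together with the product pushforward identity $q_*(\mu^u\times\delta_a)=\mu^u\times\delta_{f(a)}$---is exactly what the authors leave to the reader.
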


For the following examples, let $G$ be a topological groupoid with Haar system of measures.

\begin{enumerate}
    \item Let $X= \{0,1,2, \hdots n\}$ be an $n$-point set and notice that $G\times X \to G$ is Haar measure preserving by \cref{Exampleshaarmeasurepreserving} and the dual pullback morphism $f^*:C^*(G) \to \oplus_{i=1}^n C^*(G)$ is $f\to \oplus_{i=1}^n f$.
    \item More generally, let $X$ be a compact Hausdorff space and notice that the projection $\pi:G\times X \to G$ is Haar system preserving by \cref{Exampleshaarmeasurepreserving} and observe that the pullback morphism $\pi^*:C^*(G) \to C^*(G)\otimes C(X)$ is given by $f \to f\otimes id_{C(X)}$. 
    \item Let $X= [0,1]^n$ and let $Y= S^{n-1}$ and let $i:S^{n-1}\to [0,1]^n$ denote the inclusion. Notice that $G\times S^{n-1} \to G\times [0,1]^n$ defined by $id_G\times i$ is Haar system preserving. This example is extremely important for a future work of the first author in building groupoid models of noncommutative cell complexes.
\end{enumerate}

We conclude this section with examples of canonical morphisms of groupoids that have problematic pullbacks. In \cite{AM}, these constructions will be adjusted so that the pullbacks become unital maps between the groupoid $C^*$-algebras, allowing the authors to recover direct limit descriptions of standard $C^*$-algebras and construct new examples.

For each $n$, let $G_n$ denote the smallest groupoid whose object space consists of $n$ points and for which there exists an arrow between any two objects; i.e. $G_n$ is the product groupoid $\{1,2,3,\hdots n\}^2$. For example, the following are depictions of $G_2$ and $G_3$ respectively:

\begin{center}
\begin{minipage}[c]{.25\linewidth}
\begin{tikzpicture}
\node (a) at (0, 0) {};
\node (b) at (2, 0) {};

\filldraw (a) circle(0.05);
\filldraw (b) circle(0.05);

\draw[->] (a) to[out=-150, in=-90] (-0.5, 0) to[out=90, in=150] (a);
\draw[->] (b) to[out=-30, in=-90] (2.5, 0)  to[out=90, in=30] (b);
\draw[->] (a) to[out=20, in=160]  (b);
\draw[->] (b) to[out =-160, in=-20]  (a);
\end{tikzpicture}
\end{minipage}
\hspace{.5in}
\begin{minipage}[c]{.25\linewidth}
\begin{tikzpicture}
\node (a) at (0, 0) {};
\node (b) at (2, 0) {};
\node (c) at (1,1.7) {};

\filldraw (a) circle(0.05);
\filldraw (b) circle(0.05);
\filldraw (c) circle(0.05);

\draw[->] (a) to[out=-150, in=-90] (-0.5, 0) to[out=90, in=150] (a);
\draw[->] (b) to[out=-30, in=-90] (2.5, 0)  to[out=90, in=30] (b);
\draw[->] (c) to[out=120, in=180] (1, 2.3) to[out=0, in=60] (c);
\draw[->] (a) to[out=20, in=160]  (b);
\draw[->] (b) to[out = -160, in=-20]  (a);
\draw[->] (a) to[out=80, in=-140]  (c);
\draw[->] (c) to[out=-100, in=40]  (a);
\draw[->] (b) to[out=100, in=-40]  (c);
\draw[->] (c) to[out=-80, in=140]  (b);
\end{tikzpicture}

\end{minipage}

\end{center}

It is straightforward to check that the groupoid $C^*$-algebra $C^*(G_n)$ is the $n \times n$ matrix algebra $\mathbb{M}_n$. The most natural maps to consider here are the projections $G_n\times G_m \to G_n$.  Note that one needs to weight the Haar systems appropriately to make the projection map Haar measure preserving; even worse, the pullback morphism takes an $n\times n$ matrix $M$ to the matrix $\sum_{1\leq i,j\leq m} e_{i,j}\otimes M$ and hence the pullback map will not be unital. One can also see that any candidate for a bonding map from $G_n\times G_m \to G_n$ will essentially have the same problem, and that problem stems from the fact that the pullback of a function $f$ supported everywhere on $G_n$ will be supported everywhere on  $G_n\times G_m$, and hence cannot be of the form $M\to M\otimes id_{M_m}$.

\section{Inverse Systems: Proof of Theorem A}\label{proofoftheorema}


In this section we prove \cref{mainthm.limitexists}, restated here from the introduction for ease of reference:
\limitexists*
\begin{proof}
Let $G = \varprojlim_\alpha G_\alpha$ in the category of locally compact Hausdorff spaces (which exists and projects surjectively onto each of the factors in the inverse system). Let $q_\alpha:G\to G_\alpha$ denote the projections onto the pieces of the inverse system; by assumption, the $q_\alpha$'s are all proper and continuous. It is also easy to see that $G$, as a set, carries a groupoid structure such that the projections $q_\alpha$ are groupoid morphisms. We claim that the inversion and multiplication operations on $G$ are continuous.

To see that $m$ is continuous, let $U \subset G$ be open, and let $(x,y)\in m^{-1}(U)$. Because $G$ is a subspace of the product $\Pi_\alpha G_\alpha$, there exist $k \ge 1$, indices $\alpha_1, \ldots, \alpha_k$ and open sets $U_{\alpha_i} \subset G_{\alpha_i}$ such that $ xy\in  U_{\alpha_1}\times U_{\alpha_2} \times \hdots \times U_{\alpha_k} \times \Pi_{\alpha \notin \{\alpha_i: 1\leq i\leq k\}}G_\alpha \subset U$. As the multiplication\footnote{We use $m$ for the multiplication operation of any groupoid; the meaning should always be clear from context.} $m$ on $G_\alpha$ is continuous, for each $i$ there exists an open set $W_{\alpha_i} \subset \G2_{\alpha_i}$ containing $(q_{\alpha_i}(x),q_{\alpha_i}(y))$ such that $W_{\alpha_i}\subset m^{-1}(U_{\alpha_i})$. Notice that $(q_\alpha(x), q_\alpha(y))_\alpha \in W_{\alpha_1} \times W_{\alpha_2} \times \hdots \times W_{\alpha_k} \times \Pi_{\alpha \neq \alpha_i} \G2_\alpha \subset m^{-1}(U)$.  The same method of proof can be used to show that inversion is continuous. It follows that $G$ is a topological groupoid. 

We define $\sigma$ to be the inverse limit of the maps $\sigma_\alpha$; it is straightforward to see that it is continuous and that it satisfies the cocycle condition. 

Notice that, for each $x\in \G0$, $\{G_\alpha,\mu^{q_\alpha(x)}_\alpha,q^\beta_\alpha,A\}$ is an inverse limit of topological Borel measures spaces (see Definition 6 of \cite{Choksi}) that satisfies the maximal sequentiality condition (see Definition 4 of \cite{Choksi}) because our bonding maps are proper. By Theorem 2.1 of \cite{Choksi}, there exists a measure $\nu^x$ on $G$, defined on a sigma-subalgebra of the Borel sigma-algebra that contains a basis for $G$ and contains all compact subsets of $G$, such that the pushforward measure of $\nu^x$ along $q_\alpha$ is equal to $\mu_\alpha^{q_\alpha(x)}$ for each $\alpha$. It is easy to see that $\nu^x$ is positive as it is an inverse limit of positive measures (see the proof of Theorem 2.1 on page 325 of \cite{Choksi}). Using the Riesz-Markov-Kakutani Theorem, there exists a unique regular Radon measure $\mu^x$ such that integration against $\mu^x$ agrees with integration against $\nu^x$ for functions in $C_c(G)$. We claim that $\{\mu^x:x\in \G0\}$ is a Haar system for $G$.  Note first that the support of $\nu^x$, and hence also of $\mu^x$, must be contained in $G^x$ by construction.

To see that the system $\{\mu^x:x\in \G0\}$ is continuous, let $f\in C_c(G)$ and $\epsilon >0$. Choose $M \ge sup_{y\in \G0} \mu^y(supp(f))$ such that $M > 0$ ($M$ exists by \cref{compactmaxmeasure}).  Using a standard partition of unity argument, we can find $\alpha\in A$ and $h_\alpha \in C_c(G_\alpha)$ such that for $h = q_\alpha^*(h_\alpha)$ we have $|f - h| \leq \frac{\epsilon}{3M}$ and $supp(h) \subset supp(f)$. Let $x\in \G0$ and notice that $U_\alpha := \{y\in \G0_\alpha: \int h_\alpha \, d\mu^y_{\alpha} - \int h_\alpha \, d\mu^{q_\alpha(x)}_{\alpha} < \frac{\epsilon}{3}\}$ is an open subset in $\G0_\alpha$ and hence its pre-image $q_\alpha^{-1}(U_\alpha)$ is an open set in $\G0$ containing $x$. Notice also that if $y \in q^{-1}_\alpha(U_\alpha)$ then we have 
\begin{align*}
    \left|\int_G f d\mu^y - \int_G f d\mu^x\right| \leq \left|\int_G f d\mu^y-\int_G h d\mu^y\right| + \left|\int_G h d\mu^y-\int_G h d\mu^x\right| + \left|\int_G  h d\mu^x - \int_G f d\mu^x\right|.
\end{align*}
By the choice of $h$, we have that $\left|\int_G f d\mu^y-\int_G h d\mu^x\right| + \left|\int_G f d\mu^y-\int_G h d\mu^y\right| \leq 2\epsilon/3$  and, by properties of the pushforward measure, we have that $ \left|\int_G h d\mu^y-\int_G h d\mu^x\right| =  \left|\int_{G_\alpha} h_\alpha d\mu_\alpha^{q_\alpha(y)}-\int_{G_\alpha} h_\alpha d\mu_\alpha^{q_\alpha(x)}\right| \leq \epsilon/3$. Thus $y \in \{z\in \G0: \left|\int_G fd\mu^z - \int_G fd\mu^x\right|< \epsilon\}$. It therefore follows that the collection $\{\mu^x:x\in \G0\}$ satisfies the continuity assumption in \cref{Haarysituation}. 

The proof of left invariance follows essentially from the same kind of argument by using the fact that the measures $\mu^x_\alpha$ are left invariant.

$G$ is thus a topological groupoid with Haar system of measures and with 2-cocycle and the projection maps $q_\alpha$ are clearly Haar system preserving and cocycle preserving. It follows from \cref{inducedmorphism} that the pullbacks of the projection mappings induce $I$-norm embeddings from the directed system $\{C_c(G_\alpha,\sigma_\alpha),(p^\alpha_\beta)^*,A\}$ into $C_c(G,\sigma)$. The fact that $C^*(G)$ is the direct limit of the algebras $C^*(G_\alpha,\sigma_\alpha)$ follows from the fact that the union $\bigcup_{\alpha}C_c(G_\alpha,\sigma_\alpha)$ is dense in $C^*(G,\sigma)$. 
\end{proof}

\section{Inverse Approximation of Uniform Spaces}\label{topapprox}


The goal here is to describe how uniform spaces (which we will be working with later) can be presented as inverse limits of metrizable spaces. Our approach will be to use covers, so we begin by reviewing some of the relevant terminology and notations.  We suggest\cite{Engelking} or \cite{Isbell}  as standard references on uniform spaces.  

Let $\mathcal{U}$ and $\mathcal{V}$ be covers of a set $X$.  $\mathcal{U}$ is said to \textbf{refine}  $\mathcal{V}$ (equivalently, $\mathcal{V}$ \textbf{coarsens} $\mathcal{U}$), written $\mathcal{U}\prec \mathcal{V}$, if each element of $\mathcal{U}$ is contained in some element of $\mathcal{V}$.

\begin{Definition}
Let $X$ be a set, $A\subset X$, and $\mathcal{U}$ be a cover of $X$. We define the \textbf{star of $A$ against $\mathcal{U}$}, denoted by $st(A,\mathcal{U})$, to be the set $\cup\{U\in \mathcal{U}: U\cap A\neq \emptyset\}$.
\end{Definition}

\begin{Remark}
The prototypical example of starring is given in metric spaces. If $X$ is a metric space and $\mathcal{U}$ is the cover of $X$ by $\epsilon$-balls then the star of a subset $A\subset X$ against $\mathcal{U}$ is contained in the $2\epsilon$-neighborhood of $A$ (they are equal if $X$ is a geodesic metric space).
\end{Remark}

\begin{Definition}
The \textbf{star of a cover $\UU$ against another cover $\VV$} is the cover 
$$st(\UU,\VV) := \left\{st(U,\mathcal{V}) :U\in \UU\right\}.$$ 
A cover $\mathcal{U}$ is said to \textbf{star refine} a cover $\mathcal{V}$, if $st(\UU,\UU)$  refines $\VV$. We will write $\mathcal{U} \leq \mathcal{V}$ if $\mathcal{U}$ star refines $\mathcal{V}.$ 
\end{Definition}

\begin{Definition}
A \textbf{uniform space}   is a set $X$ equipped with a family $\Lambda = \{\coverU_\lambda\}$ of covers (called the \textbf{uniform covers} of $X$) such that
\begin{itemize}
    \item $\Lambda$ is closed under coarsening.
    \item For any $\coverU_1, \ldots, \coverU_n \in \Lambda$ there exists $\coverV \in \Lambda$ such that $\coverV \leq \coverU_j$ for all $j = 1, \ldots, n$.
\end{itemize}
A uniform space $X$ is called \textbf{Hausdorff} if, in addition, it satisfies:
\begin{itemize}
    \item For each $x,y\in X$ there exists $\mathcal{U} \in \Lambda$ such that there is no $U\in \mathcal{U}$ with $x,y\in U$.
\end{itemize}
\textbf{Unless stated otherwise, all the uniform spaces we consider in the following will be assumed to be Hausdorff.}
The only non-Hausdorff uniform structures we will work with are pseudo-metric (non-metric) uniform structures.

A function $f:X\to Y$ between uniform spaces is \textbf{uniformly continuous} if the pre-image of uniform covers are uniform covers.
\end{Definition}

As the name indicates, such a structure is used to abstract uniform properties of metric spaces, such as uniform continuity and uniform convergence.  

\begin{Definition}\label{normalsequencespace}
A \textbf{normal sequence of covers} of a set $X$  is a sequence $\{\mathcal{U}_n:n \ge 0\}$ of covers of $X$ such that $\mathcal{U}_{n+1} \leq \mathcal{U}_{n}$ for all $n\ge 0$. 
\end{Definition}

It is well known that a normal sequence of covers $\{\mathcal{U}_n:n \ge 0\}$ on $X$ defines a pseudo-metric on $X$. Here is an outline of the procedure: For elements $x,y \in X$, let $n(x,y)$ denote the maximum integer $k$ such that $x$ and  $y$ are both contained in an element of $\mathcal{U}_k$ and $\infty$ if no such maximum exists.  Let $\rho :X\times X \to [0,1)$ be defined by $\rho(x,y) = 2^{-n(x,y)}$, with the convention that $2^{-\infty} = 0$. We observe that $\rho$ itself is not necessarily  a pseudo-metric (because it may not satisfy the triangle inequality), but can be used to define a pseudo-metric $d$ via $d(x,y) = \inf \sum_{i=1}^n \rho(x_i,x_{i+1})$ where the infimum is taken over all chains $x = x_1, x_2, \hdots, x_n =y$ in $X$; $d$ satisfies the triangle inequality by definition. As shown in the proof of Theorem 14 on page 7 of \cite{Isbell} the cover by balls of radius 1 determined by the pseudo-metric $d$ refines $\mathcal{U}_1$. By an induction argument, the cover by balls of radius $\frac{1}{2^n}$ refines $\mathcal{U}_{n+1}$. The uniform structure generated by the resulting psuedo-metric is the structure whose uniform covers are precisely those which coarsen $\mathcal{U}_n$ for $n\ge 0$. We will write $\langle X, \{\mathcal{U}_n:n \ge 0\}\rangle$ to denote the resulting uniform structure.

\begin{Definition}\label{cofinalrefinement}
Let $\{\mathcal{U}_n\}_n$ and $\{\mathcal{V}_n\}_n$ be two normal sequences of covers. We will say that $\{\mathcal{V}_n\}_n$ \textbf{cofinally refines} $\{\mathcal{U}_n\}_n$ if for every $m\ge 0$ there exists $k(m)$ such that $\mathcal{V}_{k(m)}\leq \mathcal{U}_m$.
\end{Definition}

The following Lemma demonstrates the importance of cofinal refinement.

\begin{Lemma}\label{cofinallemma}
Let $\{\mathcal{U}_n\}$ and $
\{\mathcal{V}_n\}$ be normal sequences of covers. $\{\mathcal{U}_n\}$ cofinally refines $
\{\mathcal{V}_n\}$ if and only if the identity map $id_X:(X,\{\mathcal{U}_n\}) \to (X,\{\mathcal{V}_n\})$ is uniformly continuous.
\end{Lemma}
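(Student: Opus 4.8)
The plan is to exploit the explicit description of the uniform structure generated by a normal sequence, recorded just after \cref{normalsequencespace}: the uniform covers of $\langle X, \{\mathcal{U}_n\}\rangle$ are precisely those covers that coarsen some $\mathcal{U}_n$ (equivalently, $\mathcal{U}_n \prec \mathcal{W}$ for some $n$). Because of this, uniform continuity of $id_X$ need only be tested on the generating covers $\mathcal{V}_m$: indeed, since the preimage of a cover under $id_X$ is the cover itself and coarsening is transitive, $id_X$ is uniformly continuous if and only if each $\mathcal{V}_m$ is a uniform cover of $(X,\{\mathcal{U}_n\})$, i.e. $\mathcal{U}_k \prec \mathcal{V}_m$ for some $k=k(m)$. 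The one genuinely substantive point is that cofinal refinement is phrased via \emph{star}-refinement ($\leq$) while uniform continuity will only hand us ordinary refinement ($\prec$); bridging this gap is exactly where normality of $\{\mathcal{U}_n\}$ enters, and it is the step I expect to be the main (mild) obstacle.

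First I would record two elementary observations about the orders $\prec$ and $\leq$. Star-refinement implies refinement: if $\mathcal{A} \leq \mathcal{B}$ then, since every $A\in\mathcal{A}$ satisfies $A \subseteq st(A,\mathcal{A})$ and $st(\mathcal{A},\mathcal{A}) \prec \mathcal{B}$, we get $\mathcal{A} \prec \mathcal{B}$. Moreover there is a mixed transitivity: if $\mathcal{A} \leq \mathcal{B}$ and $\mathcal{B} \prec \mathcal{C}$ then $\mathcal{A} \leq \mathcal{C}$, because $st(\mathcal{A},\mathcal{A}) \prec \mathcal{B} \prec \mathcal{C}$ and $\prec$ is transitive. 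These two facts, together with the generating description above, reduce the whole lemma to definition-chasing.

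For the forward direction, assume $\{\mathcal{U}_n\}$ cofinally refines $\{\mathcal{V}_n\}$ and let $\mathcal{W}$ be any uniform cover of $(X,\{\mathcal{V}_n\})$. Then $\mathcal{V}_m \prec \mathcal{W}$ for some $m$, and cofinal refinement supplies $k$ with $\mathcal{U}_k \leq \mathcal{V}_m$, hence $\mathcal{U}_k \prec \mathcal{V}_m \prec \mathcal{W}$. Thus $\mathcal{W}$ coarsens $\mathcal{U}_k$ and so is a uniform cover of $(X,\{\mathcal{U}_n\})$; as its preimage under $id_X$ is $\mathcal{W}$ itself, uniform continuity follows.

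For the reverse direction, assume $id_X$ is uniformly continuous and fix $m$. The cover $\mathcal{V}_m$ coarsens itself, so it is a uniform cover of $(X,\{\mathcal{V}_n\})$; by uniform continuity its preimage, again $\mathcal{V}_m$, is a uniform cover of $(X,\{\mathcal{U}_n\})$, which means $\mathcal{U}_k \prec \mathcal{V}_m$ for some $k$. Now I invoke normality: since $\mathcal{U}_{k+1} \leq \mathcal{U}_k$ and $\mathcal{U}_k \prec \mathcal{V}_m$, the mixed transitivity above yields $\mathcal{U}_{k+1} \leq \mathcal{V}_m$. Setting $k(m) = k+1$ witnesses that $\{\mathcal{U}_n\}$ cofinally refines $\{\mathcal{V}_n\}$, completing the proof. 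Everything apart from this final $\prec$-to-$\leq$ conversion is a direct unwinding of definitions.
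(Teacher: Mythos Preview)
Your proof is correct. The paper itself does not supply a proof of this lemma; it states the result and moves on, evidently treating it as an elementary consequence of the definitions (which it is). Your argument is exactly the natural one: reduce uniform continuity of $id_X$ to the generating covers, and use normality of $\{\mathcal{U}_n\}$ to upgrade the ordinary refinement $\mathcal{U}_k \prec \mathcal{V}_m$ handed to you by uniform continuity into the star-refinement $\mathcal{U}_{k+1} \leq \mathcal{V}_m$ demanded by \cref{cofinalrefinement}. The two auxiliary facts you isolate (that $\leq$ implies $\prec$, and the mixed transitivity $\mathcal{A} \leq \mathcal{B} \prec \mathcal{C} \Rightarrow \mathcal{A} \leq \mathcal{C}$) are correct and are precisely what is needed.
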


If $X$ is a uniform space then it is known that $X$ is the inverse limit of metrizable uniform spaces where the inverse limit is taken in the category of uniform spaces. Indeed, let $\{\mathcal{U}_n:n\ge 0\}$ be a normal sequence of uniform covers of $X$ and let $\langle X,\{\mathcal{U}_n\}\rangle$ denote the resulting pseudo-metric space. It is not difficult to show that the identity map $id_X:X\to \langle X,\{\mathcal{U}_n\}\rangle$ is uniformly continuous. One can show that if $\{\mathcal{U}_n:n\ge 0\}$ and $\{\mathcal{V}_n:n\ge 0\}$ are normal sequences of uniform covers then there exists a normal sequence of uniform covers that cofinally refines both. One thus has an inverse system indexed by the normal sequences of covers and ordered by cofinal refinement. It is not hard to show that the inverse limit of this system is precisely the given uniform space $X$. In order to show that $X$ is a limit of metrizable spaces, we modify the construction to quotient out in each pseudo-metric space by points which cannot be differentiated by the pseudo-metric.

A uniform structure on a set $X$ generates a topology in a way that is analogous to the way metrics define topologies. One defines it by declaring a set $A\subset X$ to be a neighborhood of a point $x\in X$ if there exists a uniform cover $\mathcal{U}$ of $X$ such that $st(\{x\},\mathcal{U}) \subset A$. We say that a collection of covers is a \textbf{base} for a uniform structure if it forms a uniform structure when one takes all coarsenings of covers in the given collection.

\begin{Definition}\label{paracompactness}(see Theorems 5.1.9 and 5.1.12 of \cite{Engelking}) 
A Hausdorff topological space $X$ is said to be \textbf{paracompact} if it satisfies any of the following equivalent conditions:
\begin{enumerate}
\item Every open cover of $X$ has a locally finite open refinement.
\item For every open cover $\mathcal{U}$, there exists a partition of unity whose carriers refine $\mathcal{U}$.
\item\label{starthatspace} Every open cover admits an open star refinement. 
\item \label{uniform} The collection of all open covers is a base for a uniform structure that generates the given topology.
\end{enumerate}
\end{Definition}

Condition \cref{uniform} says that paracompact spaces can be viewed as uniform spaces. In fact, they are the largest class of topological spaces which can be endowed with a uniform structure such that uniform concepts correspond directly to topological ones. \footnote{To address the possible objection that completely regular spaces should be that class, note that the collection of all open covers does not necessarily define a base for a uniform structure.  Therefore, one cannot guarantee that every continuous function between completely regular spaces is uniformly continuous with respect to any uniform structure that generates their topology.}

Recall that a Hausdorff topological space is said to be \textbf{Lindel\"of} if every open cover admits a countable open refinement. It is well known that every Lindel\"of space is paracompact and hence can be viewed as a uniform spaces.   It is easy to show that, for locally compact spaces, Lindel\"of is just a different guise for $\sigma$-compactness. Just to stress this (well-known) fact, which we will rely on in the future, we highlight it as a lemma:

\begin{Lemma}
Let $X$ be a locally compact Hausdorff space. $X$ is Lindel\"of if and only if $X$ is $\sigma$-compact.
\end{Lemma}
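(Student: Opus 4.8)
The plan is to prove both implications by exploiting the relationship between local compactness and the two covering conditions ($\sigma$-compactness and Lindel\"ofness). The forward direction is standard point-set topology and holds without local compactness, so the content is really in the reverse implication, where local compactness must be used essentially. First I would recall the definitions: $X$ is Lindel\"of if every open cover has a countable subcover (equivalently, by the Hausdorff setting here, a countable open refinement), and $X$ is $\sigma$-compact if $X = \bigcup_{n=1}^{\infty} K_n$ for compact sets $K_n$.

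For the direction $\sigma$-compact $\Rightarrow$ Lindel\"of, I would argue directly and note that local compactness is not even needed: given $X = \bigcup_n K_n$ with each $K_n$ compact, take any open cover $\mathcal{U}$ of $X$. Each $K_n$ is covered by finitely many members of $\mathcal{U}$, so collecting these finite subfamilies over all $n$ yields a countable subcover of $X$. Hence $X$ is Lindel\"of.

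For the converse, Lindel\"of $\Rightarrow$ $\sigma$-compact, this is where local compactness enters. Since $X$ is locally compact Hausdorff, each point $x$ has an open neighborhood $V_x$ with compact closure $\overline{V_x}$. The family $\{V_x : x \in X\}$ is an open cover of $X$, so by the Lindel\"of property there is a countable subcover $\{V_{x_n} : n \in \mathbb{N}\}$. Then $X = \bigcup_n V_{x_n} \subset \bigcup_n \overline{V_{x_n}}$, and since each $\overline{V_{x_n}}$ is compact, $X$ is a countable union of compact sets, i.e.\ $\sigma$-compact.

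The only step that requires care, and the one I would flag as the main (mild) obstacle, is the converse: it is essential that local compactness supplies neighborhoods with \emph{compact closures}, so that the countable subcover extracted by the Lindel\"of property can be inflated to a countable cover by compact sets. Without local compactness a Lindel\"of space need not be $\sigma$-compact (e.g.\ the irrationals), so the hypothesis cannot be dropped. Everything else is routine, and since the statement is explicitly labeled well-known, I would keep the write-up to the two short extractions above.
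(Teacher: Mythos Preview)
Your proof is correct and entirely standard. The paper does not actually supply a proof of this lemma; it only remarks that the fact is well known and ``easy to show,'' so your two short extractions (covering each compact piece by finitely many sets for one direction, and extracting a countable subcover from relatively compact neighborhoods for the other) are precisely the argument the authors are implicitly invoking.
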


\begin{Lemma}\label{coverorfunction}
Let $X$ be a Lindel\"of  space and let $\{X_\alpha,p^\alpha_\beta,A\}$ be an inverse approximation of $X$ by metric spaces. For every $\alpha \in A$ denote by $q_\alpha$ the projection map $X \to X_\alpha$, and for each $n$ let $\coverV^\alpha_n$ be the cover of $X$ given by the pre-images under $q_\alpha$ of the cover of $X_\alpha$ by $\frac{1}{2^n}$-balls.  The following conditions are equivalent:
\begin{enumerate}[label=(\arabic*)]
    \item \label{item.coverrefinement} for every normal sequence $\{\mathcal{U}_n:n\ge 0\}$ of open covers for $X$, there exists $\alpha$ such that the normal sequence $\{\coverV^\alpha_n : n \ge 0\}$ cofinally refines $\{\mathcal{U}_n:n\ge 0\}$.
    \item \label{item.fnpullback} For every continuous function $f:X\to Y$ where $Y$ is a separable metric space, there exists $\alpha$ such that $f$  is a pullback of a uniformly continuous function $f_\alpha:X_\alpha\to Y$; i.e $f= f_\alpha \circ q_\alpha$.
\end{enumerate}
\end{Lemma}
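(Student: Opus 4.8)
The plan is to recast both conditions as comparisons of uniform structures on $X$ and then apply \cref{cofinallemma}. Recall that a normal sequence $\{\mathcal{U}_n\}$ of open covers generates the pseudo-metric uniform structure $\langle X,\{\mathcal{U}_n\}\rangle$, and that for a (separable) metric target the ball covers $\mathcal{B}_n$ of radius $\tfrac{1}{2^n}$ form a normal sequence generating the metric uniform structure. Since $\coverV^\alpha_n = q_\alpha^{-1}(\mathcal{B}^{X_\alpha}_n)$, the structure $\langle X,\{\coverV^\alpha_n\}\rangle$ is exactly the pullback along $q_\alpha$ of the metric structure of $X_\alpha$. Throughout I will use that $q_\alpha$ is surjective (the projections onto the metric quotients are surjective), so that uniform covers of $X_\alpha$ correspond to their $q_\alpha$-preimages, allowing covers to be pushed down from $X$ to $X_\alpha$.

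For the implication $(1)\Rightarrow(2)$, given $f\colon X\to Y$ with $Y$ separable metric, set $\mathcal{U}_n = f^{-1}(\mathcal{B}^Y_n)$; continuity of $f$ makes these open covers, and since taking preimages preserves star refinement, $\{\mathcal{U}_n\}$ is a normal sequence. Applying $(1)$ gives $\alpha$ with $\{\coverV^\alpha_n\}$ cofinally refining $\{\mathcal{U}_n\}$, so by \cref{cofinallemma} the $\{\mathcal{U}_n\}$-structure is coarser than the $\{\coverV^\alpha_n\}$-structure. First I factor $f$ through $q_\alpha$: if $q_\alpha(x)=q_\alpha(x')$ then $x,x'$ lie in a common member of every $\coverV^\alpha_n$ (the preimage of a ball around the common value), so their $\{\coverV^\alpha_n\}$-pseudodistance is $0$; by the coarsening their $\{\mathcal{U}_n\}$-pseudodistance is $0$ too, whence $f(x),f(x')$ lie in a common $\tfrac{1}{2^n}$-ball of $Y$ for all $n$ and $f(x)=f(x')$. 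This defines $f_\alpha\colon X_\alpha\to Y$ with $f=f_\alpha\circ q_\alpha$, and its uniform continuity is immediate: for each $m$, $q_\alpha^{-1}(f_\alpha^{-1}(\mathcal{B}^Y_m))=\mathcal{U}_m$ is a $\{\coverV^\alpha_n\}$-uniform cover, hence coarsens some $\coverV^\alpha_k=q_\alpha^{-1}(\mathcal{B}^{X_\alpha}_k)$, and descending along the surjection $q_\alpha$ shows $f_\alpha^{-1}(\mathcal{B}^Y_m)$ coarsens $\mathcal{B}^{X_\alpha}_k$.

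For the implication $(2)\Rightarrow(1)$, given a normal sequence $\{\mathcal{U}_n\}$ of open covers, form the pseudo-metric $d$ of $\langle X,\{\mathcal{U}_n\}\rangle$ and let $Y = X/\{d=0\}$ with induced metric $\bar d$, letting $f\colon X\to Y$ be the quotient map, so $d(x,x')=\bar d(f(x),f(x'))$. Here $f$ is continuous because $id_X\colon X\to\langle X,\{\mathcal{U}_n\}\rangle$ is (uniformly) continuous and the metric quotient is continuous; $f$ is surjective, so $Y$ is a continuous image of the Lindel\"of space $X$, hence Lindel\"of, hence (being metric) separable. Thus $Y$ is a legitimate target for $(2)$, giving $\alpha$ and a uniformly continuous $f_\alpha$ with $f=f_\alpha\circ q_\alpha$. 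The $\{\mathcal{U}_n\}$-structure equals the $f$-pullback of the metric structure of $Y$; writing $f=f_\alpha\circ q_\alpha$ and using uniform continuity of $f_\alpha$, each $f^{-1}(\mathcal{B}^Y_m)=q_\alpha^{-1}(f_\alpha^{-1}(\mathcal{B}^Y_m))$ coarsens some $\coverV^\alpha_k$. Hence every $\{\mathcal{U}_n\}$-uniform cover is a $\{\coverV^\alpha_n\}$-uniform cover, so the $\{\coverV^\alpha_n\}$-structure is finer, and \cref{cofinallemma} yields that $\{\coverV^\alpha_n\}$ cofinally refines $\{\mathcal{U}_n\}$.

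The routine bookkeeping I am suppressing is the constant-chasing needed to make $\{\mathcal{B}_n\}$ a genuine star-refining normal sequence; this only shifts indices and is absorbed by cofinal refinement. The main obstacle is the direction $(2)\Rightarrow(1)$: one must manufacture the separable metric target $Y$ out of the abstract normal sequence via the pseudo-metric quotient and then verify both that $f$ lands continuously in it and that it is separable, which is precisely where the Lindel\"of hypothesis on $X$ enters. A secondary point requiring care in both directions is the descent of uniform covers from $X$ down to $X_\alpha$, which relies on surjectivity of the projections $q_\alpha$.
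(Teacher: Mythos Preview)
Your proof is correct and follows essentially the same route as the paper's: in both directions you pull back ball covers, invoke \cref{cofinallemma}, and factor through the projection $q_\alpha$. The only minor difference is in $(2)\Rightarrow(1)$: the paper first uses Lindel\"of to reduce to normal sequences of \emph{countable} covers (these being cofinal) and then notes that such a sequence induces a separable metric quotient, whereas you form the pseudo-metric quotient directly and argue separability via ``continuous image of Lindel\"of is Lindel\"of, and a Lindel\"of metric space is separable.'' Both arguments are standard and equivalent; yours is arguably slightly cleaner, and you are more explicit than the paper about the descent of covers along the surjection $q_\alpha$ and the index-shifting needed to upgrade refinement to star refinement.
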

\begin{proof}
Assume that condition \cref{item.coverrefinement} holds. Let  $f:X\to Y$ be a continuous function (not necessarily proper) where $Y$ is just some separable metric space. For each $n \ge 0$, let $\mathcal{W}_n$ be the pre-image under $f$ of the cover of $Y$ by balls of radius $2^{-n}$ . Notice that $\mathcal{W}_n$ is a normal sequence on $X$. By assumption, there exists $\alpha\in A$ such that $\{\mathcal{W}_n\}$ is cofinally refined by $\{\coverV_n^\alpha\}$. Notice that if $q_\alpha(x) = q_\alpha(y)$ then it must be the case that $f(x) = f(y)$. We define a continuous function $f_\alpha:X_\alpha \to Y$ by $f_\alpha(z) = f(x)$ for any $x \in q_\alpha^{-1}(z)$. It is straightforward to show that $f_\alpha$ is uniformly continuous on $X_\alpha$. Evidently, we have that $f = f_\alpha \circ q_\alpha$.

To show that \cref{item.fnpullback} implies \cref{item.coverrefinement}, note that we may assume that every normal sequence consists of countable covers, as the Lindel\"of property guarantees that these are cofinal (under the order of cofinal refinement) in the poset of normal sequences of covers. Every normal sequence $\{\mathcal{U}_n\}$ consisting of countable covers  induces a separable metric space.  By assumption, the projection $p$ of $X$ on the induced metric space $\overline{X}$ has the property that, for some $\alpha\in A$, $p$ is the pullback along $q_\alpha$ of a uniformly continuous function $p_\alpha:X_\alpha\to \overline{X}$. It follows that the pre-image under $p_\alpha$ of the covers of $\overline{X}$ by $\frac{1}{2^n}$ balls is a normal sequence on $X_\alpha$. Because the covers of $X_\alpha$ by balls of radius $\frac{1}{2^n}$ is cofinal (under cofinal refinement) in the collection of normal sequences of covers on $X_\alpha$, it follows that the normal sequence of covers of $X_\alpha$ by balls of radius $\frac{1}{2^n}$ cofinally refines the pre-image of the covers of $\overline{X}$ by balls of radius $\frac{1}{2^n}$. Hence, $\{\coverV^\alpha_n\}$ cofinally refines $\{\mathcal{U}_n\}$.
\end{proof}

\begin{Definition}[Definition/Notation]\label{exhaustion}
Let $X$ be a locally compact and $\sigma$-compact space. By an \textbf{exhaustion of $X$ by compact subsets} we will mean a nested collection $\{K_n:n\ge 0\}$ of compact neighborhoods such that $\bigcup_{i=1}^{\infty} int(K_n) = X$ ( where $int(A)$ denotes the interior of a subset $A\subset X$).
\end{Definition}


\begin{Proposition}\label{propermapping}
Let $X$ be a locally compact and $\sigma$-compact space and let $\{\mathcal{U}_n:n \ge 0\}$ be a normal sequence of locally finite open covers by relatively compact sets. Let $d(x, y)$ be the resulting pseudo-metric on $X$ (see the discussion following \cref{normalsequencespace}).  Let $\overline{X}$ denote the metric quotient of $(X,d)$ and let $q:(X,d)\to \overline{X}$ denote the quotient map. The following properties are satisfied:
\begin{enumerate}[label=(\arabic*)]
\item $id_X : X \to (X,d)$ is a proper map. Moreover, the composition of $id_X$ with the metrizable quotient map $q:(X,d) \to \overline{X}$ is proper. 
\item the quotient map $q:(X,d) \to \overline{X}$ is an open map; in fact, for every $x\in X$, we have $q(B(x,\epsilon)) = B(\overline{x},\epsilon)$.
\item \label{nesting} $st(x, \coverU_{n + 1}) \subset B(x, \frac{1}{2^n}) \subset st(x, \coverU_n)$, where $B(x, \frac{1}{2^n})$ is the ball around $x$ of radius $\frac{1}{2^n}$ (measured with respect to the pseudo-metric $d$). 
\end{enumerate}
\end{Proposition}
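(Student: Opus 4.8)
The plan is to establish the third claim first, since the nesting $st(x,\mathcal{U}_{n+1}) \subset B(x,\tfrac{1}{2^n}) \subset st(x,\mathcal{U}_n)$ is the bridge between the combinatorial stars of the covers and the metric balls, and everything else rests on it. Recall from the discussion after \cref{normalsequencespace} that $\rho(x,y) = 2^{-n(x,y)}$, where $n(x,y)$ is the largest $k$ with $x,y$ lying in a common member of $\mathcal{U}_k$, and that $d$ is built from $\rho$ by chaining. The key observation is that $y \in st(x,\mathcal{U}_k)$ precisely when $x$ and $y$ share a member of $\mathcal{U}_k$, i.e. when $\rho(x,y) \le 2^{-k}$; thus $st(x,\mathcal{U}_k) = \{y : \rho(x,y) \le 2^{-k}\}$. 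Combining this with the standard comparison $d \le \rho \le 2d$ coming from the chaining construction (see the proof of Theorem 14 in \cite{Isbell}) yields both inclusions at once: if $y \in st(x,\mathcal{U}_{n+1})$ then $d(x,y) \le \rho(x,y) \le 2^{-(n+1)} < 2^{-n}$, so $y \in B(x,\tfrac{1}{2^n})$; conversely, if $d(x,y) < 2^{-n}$ then $\rho(x,y) \le 2d(x,y) < 2^{-(n-1)}$, and since $\rho$ only takes the values $2^{-k}$ (and $0$) this forces $\rho(x,y) \le 2^{-n}$, i.e. $y \in st(x,\mathcal{U}_n)$.

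For the second claim I would argue directly from the definition of the metric quotient. Writing $\overline{d}$ for the quotient metric, we have $\overline{d}(q(y),q(x)) = d(y,x)$ by construction, so $q^{-1}(B(\overline{x},\epsilon)) = \{y : d(y,x) < \epsilon\} = B(x,\epsilon)$ and therefore $q(B(x,\epsilon)) = B(\overline{x},\epsilon)$. Since every $d$-open set is a union of balls and $q$ is onto, it follows that $q$ carries open sets to open sets.

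The real work, and the main obstacle, is properness in the first claim, because $id_X$ runs from the finer (original) topology $\tau$ to the coarser pseudo-metric topology, so the preimage of a compact set being compact does not follow formally from continuity. My plan has two ingredients. First, the nesting relations show that each $d$-ball contains a star $st(y,\mathcal{U}_{n+1})$, which is $\tau$-open because the $\mathcal{U}_n$ are $\tau$-open covers; hence every $d$-open set is $\tau$-open, i.e. the $d$-topology is coarser than $\tau$, and consequently every $d$-compact (in particular $d$-closed) set is $\tau$-closed. Second, I use the hypotheses that the covers are locally finite by relatively compact sets: local finiteness forces only finitely many members of $\mathcal{U}_1$ to contain any given point, so each star $st(z,\mathcal{U}_1)$ is a finite union of relatively compact sets and is therefore relatively compact. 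Now if $K \subset (X,d)$ is compact, cover it by the $d$-balls $B(z,\tfrac12)$, $z \in K$, extract a finite subcover $B(z_1,\tfrac12),\dots,B(z_m,\tfrac12)$, and use the nesting to get $K \subset \bigcup_i st(z_i,\mathcal{U}_1)$, a relatively compact set; being $\tau$-closed and contained in a $\tau$-relatively-compact set, $K$ is $\tau$-compact. This proves $id_X$ is proper.

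The same scheme handles the composition with $q$. Given a compact $\overline{K} \subset \overline{X}$, cover it by balls $B(\overline{z_i},\tfrac12)$; since $\overline{d}(q(y),\overline{z_i}) = d(y,z_i)$, the second claim gives $q^{-1}(B(\overline{z_i},\tfrac12)) = B(z_i,\tfrac12)$, whence $(q\circ id_X)^{-1}(\overline{K}) \subset \bigcup_i B(z_i,\tfrac12) \subset \bigcup_i st(z_i,\mathcal{U}_1)$ is contained in a relatively compact set. As $q\circ id_X$ is continuous and $\overline{K}$ is closed, the preimage is $\tau$-closed, hence $\tau$-compact. The only point demanding care throughout is keeping the three topologies (original, pseudo-metric, and quotient) straight; once the coarseness of the $d$-topology and the relative compactness of the stars are in hand, properness drops out.
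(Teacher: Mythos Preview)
Your approach matches the paper's closely: parts (2) and (3) are handled from the definitions, with the key inclusion $B(x,2^{-n}) \subset st(x,\mathcal{U}_n)$ obtained from the Isbell/Frink inequality $\rho \le 2d$, exactly as the paper indicates; and part (1) proceeds by covering a given compact set with finitely many small $d$-balls, each contained in a $\tau$-relatively-compact set. The paper uses directly that balls of radius $1$ refine $\mathcal{U}_0$, whereas you pass through stars and invoke local finiteness of $\mathcal{U}_1$, but this is cosmetic.

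There is one genuine slip, in the properness of $id_X : X \to (X,d)$: you assert that a $d$-compact set is ``in particular $d$-closed'' and hence $\tau$-closed. This fails because $(X,d)$ is only a \emph{pseudo}-metric space. Concretely, if every $\mathcal{U}_n$ contains a single member covering a fixed nondegenerate closed interval $I \subset X$ (one can check this is compatible with local finiteness, relative compactness, and star refinement), then $d$ vanishes identically on $I$; the interior of $I$ is then $d$-compact (any $d$-open set meeting it contains all of $I$) yet not $\tau$-compact, so $id_X$ is not proper. The paper's own proof glosses over the same point, writing ``$id_X^{-1}(K_1)$ is closed by the continuity of $id_X$'' without noting that $K_1$ need not be $d$-closed, so you have reproduced rather than introduced the gap. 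Crucially, your argument for the \emph{composition} $q \circ id_X$ is unaffected, since $\overline{X}$ is a genuine metric space in which compacta are closed, and it is only the composition that is invoked later in the paper.
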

\begin{proof} The proofs of (2) and (3) follow directly from the definitions, except for the inclusion $B(x, \frac{1}{2^n}) \subset st(x, \coverU_n)$, which follows by applying induction to the argument given in the proof of Theorem 1 in \cite{Isbell}. 

To see (1), notice that the cover of $X$ by balls of radius $1$ refines $\mathcal{U}_0$ and therefore the balls of radius 1 in $X$ are relatively compact subsets of $X$. If $K_1$ is a compact subset of $(X, d)$ then it can be covered by finitely many balls of radius 1; hence $id_X^{-1}(K_1)$ is contained in a finite union of relatively compact subsets of $X$ and is therefore compact ($id_X^{-1}(K_1)$ is closed by the continuity of $id_X$).  A similar argument works to show that $q : (X, d) \to \overline{X}$ is proper, based on the fact that the image of the 1-balls in $(X, d)$ must also be relatively compact in $\overline{X}$, and the pre-image of a ball of radius $1$ of $\overline{X}$ is exactly a ball of radius $1$ in $(X, d)$.  The composition of two proper maps is proper, concluding the argument.
\end{proof}

\section{Approximations of \texorpdfstring{$\sigma$}{sigma}-compact groupoids: \texorpdfstring{\\}{}Proof of Theorem B}\label{guts}

We want to apply the ideas of \cref{topapprox} to groupoids in such a way that the resulting quotient object also has a groupoid structure.  To that end, we have to cover the objects and arrows separately by normal sequences; the interplay between the arrow covers and the object covers is a bit subtle, as can be seen from \cref{example.needsintersectionrefinement} and the technical conditions in \cref{Coverings}.

\begin{Example} \label{example.needsintersectionrefinement}   Consider the groupoid $G$ pictured below, consisting of 4 objects $\{x_1, x_2, x_3, x_4\}$ and all possible arrows $g_{ij}$ from $x_i$ to $x_j$ (in order to make the picture more readable, only a few arrow labels are shown):
\begin{center}
\begin{tikzpicture}
\node (e1) at (0, 0){};
\node (e2) at (2, 0){};
\node (e3) at (2, -2){};
\node (e4) at (0, -2){};

\filldraw (e1) circle(0.04) node[anchor=north]{};
\filldraw (e2) circle(0.04) node[anchor=north]{};
\filldraw (e3) circle(0.04) node[anchor=north]{};
\filldraw (e4) circle(0.04) node[anchor=north]{};

\draw[->] (e1) to[out=-150, in=-90] (-.5, 0) node[anchor=east]{$x_1$}  to[out=90, in=150] (e1);
\draw[->] (e2) to[out=-30, in=-90] (2.5, 0) node[anchor=west]{$x_2$} to[out=90, in=30] (e2);
\draw[->] (e3) to[out=-30, in=-90] (2.5, -2) node[anchor=west]{$x_3$} to[out=90, in=30] (e3);
\draw[->] (e4) to[out=-150, in=-90] (-.5, -2) node[anchor=east]{$x_4$} to[out=90, in=150] (e4);

\draw[->] (e1) to[out=15, in = 165]node[pos=.5, yshift=2mm]{$g_{12}$} (e2);
\draw[->] (e2) to[out=-165, in = -15] (e1);
\draw[->] (e4) to[out=-15, in = -165]node[pos=.5, yshift=-2mm]{$g_{43}$} (e3);
\draw[->] (e3) to[out=165, in = 15] (e4);
\draw[->] (e1) to[out = -105, in = 105](e4);
\draw[->] (e4) to[out=75, in = -75] (e1);
\draw[->] (e2) to[out = -105, in = 105](e3);
\draw[->] (e3) to[out=75, in = -75] (e2);
\draw[->] (e1) to[out = -30, in = 120] (e3);
\draw[->] (e3) to[out = 150, in = -60] (e1);
\draw[->] (e2) to[out = -150, in = 60] (e4);
\draw[->] (e4) to[out = 30, in = -120] (e2);
\end{tikzpicture}
\end{center}
We endow $G$ with the discrete topology.  Consider the following open sets on $G$: $U_{00} = \{g_{11}, g_{33}\}$ and $U_{ij} = \{g_{ij}\}$ for all possible combinations of $i,j$ not equal to $1,1$ and $3,3$.  For $n \geq 1$, the collection $\coverU_{n}^1$ consisting of all these sets $\{U_{ij}\}$ (including $U_{00}$) is then a cover of the arrow space; $\coverU_{n}^0 = \{U_{00}, U_{22}, U_{44}\}$ is the restriction of this cover to the object space, and a cover of the object space in its own right (note that $x_i = g_{ii}$).

Since the $\coverU_n$'s form a normal sequence of covers, we can apply the construction of \cref{topapprox} to get a quotient space in which $x_1$ and $x_3$ are identified (since $U_{00}$ appears in all covers).  However, it should be apparent that if we want to place a groupoid structure on the resulting object, we have a problem with multiplication: $[g_{12}]$ has source $x_1$ and $[g_{43}]$ has target $x_3$, so since $x_1 \sim x_3$ we would expect the two to be composable; however, it is clear that it is not possible to define $[g_{12}] \cdot [g_{43}]$ in a way that is compatible with the structure on $G$.

The concept of intersection separating refinement which we introduce below (\cref{intersectionseparatingrefinement}) is needed in order to ensure that this kind of situation cannot occur and multiplication on the quotient is well-defined.  To check if the condition holds for our particular example, we construct a new collection of sets, one for each element $g_{ii}$, consisting of the intersection of all sets in $s(\coverU_n^1)$ which contain $g_{ii}$; this leads us to the collection of sets $U_{g_{ii}} = \{g_{ii}\}$.  Then $\coverV = \{U_{g_{ii}} : i = 1, \ldots,4\}$ is a cover of the object space, and one condition in the intersection separating requirement is that $\coverU_{n + 1}^0$ should refine $\coverV$; this is clearly not the case, because $U_{00}$ is contained in $\coverU_{n + 1}^0$ but is not a subset of any set in $\coverV$.  In other words, the normal sequence described here would be disqualified from consideration by condition \cref{intersectionseparatingcondition} of \cref{Coverings}.
\end{Example}

By Proposition I.4 from \cite{Westman}, if $G$ has a continuous Haar system of measures then the target and source maps are necessarily open.  Our proof of \cref{Coverings} does not need $G$ to have a Haar system, but does need the target and source maps to be open.

\begin{Notation}\label{groupoidcover}
If $\mathcal{U}$ is a cover of a set $X$ and $A\subset X$, we write $\mathcal{U}|_{A}$ to be the collection of elements of $\mathcal{U}$ that intersect $A$. 

Let $G$ be a topological groupoid. An \text{open cover} $\mathcal{U}$ of $G$ will consist of a pair $\{\mathcal{U}^0,\, \mathcal{U}^1\}$ of open covers of $\G0$ and $\G1$, respectively.
\end{Notation}

\begin{Definition}\label{intersectionseparatingrefinement}
Let $\mathcal{U}$ be a \emph{finite} open cover of a locally compact Hausdorff space $X$. For each $x\in X$, let $U_x$ be the intersection of all elements of $\mathcal{U}$ that contain $x$. We define the  \textbf{intersection refinement of $\mathcal{U}$} to be the cover $\mathcal{U}' = \{U_x:x\in X\}$ (see \cite{CM} for a similar concept). Notice that $\coverU'$ must also be a finite cover.

Define an equivalence relation on $X$ by $x\sim y$ if $U_x=U_y$, and let $[[x]]$ denote the equivalence class of $x\in X$ under this equivalence relation. It is possible that the set $[[x]]$ is neither open nor closed in $X$. 

We say a cover $\mathcal{V}$ is an \textbf{intersection separating refinement of $\mathcal{U}$}, denoted $ \mathcal{V} \leq_{int} \mathcal{U}$,  if:
\begin{itemize}
\item $\coverV$ is a refinement of the intersection refinement $\coverU'$ described earlier, and 
\item whenever $\overline{[[x]]} \cap \overline{[[y]]} = \emptyset$ for some $x, y \in X$ then no open set in $\coverV$ contains elements from both sets $\overline{[[x]]}$ and $\overline{[[y]]}$ simultaneously.  
\end{itemize} 
Such a refinement always exists, because the equivalence relation defines a finite partition of $X$ and the normality of $X$ guarantees that we can always separate two closed disjoint sets by disjoint open sets.
\end{Definition}

\begin{Definition}\label{groupoidexhuastion}
Suppose $G$ is a $\sigma$-compact groupoid and the source and target maps are open.  Let $\{K_n\}$ be an exhaustion of $G$ by compact sets as in \cref{exhaustion}. Then $\{K_n':= K_n\cup t(K_n)\cup s(K_n)\}$ is also an exhaustion of $G$ by compact sets, and $K_n'|_{\G0}$ is an exhaustion of $\G0$ (both satisfying the requirement of \cref{exhaustion}).  We call an exhaustion of a $\sigma$-compact groupoid that has been obtained in this manner a \textbf{groupoid exhaustion}.  By construction, $K_n'$ has the property that $s(K_n'), t(K_n') \subset K_n'$.
\end{Definition}

The key step of our groupoid approximation argument is the following result, which gives a method for choosing a normal sequence of covers with very specific properties.  The consequence of these properties is explained in \cref{CoveringsExplanation}.

\begin{Proposition}\label{Coverings}
Let $G$ be a $\sigma$-compact groupoid with open source and target maps and let $\{K_n\}$ be a groupoid exhaustion of $G$ by compact sets (as in \cref{groupoidexhuastion}). Let $\{\mathcal{W}_n:n\ge0\}$ be a given normal sequence (see \cref{normalsequencespace}) of open covers of $G$ (see \cref{groupoidcover}). There exists  a normal sequence of countable and locally finite open covers  $\{\mathcal{U}_n\}_{n\ge 0}$ of $G$ such that for each $n \geq 1$ we have:
\begin{enumerate}[label=(\arabic*)]

\item \label{proper} $\mathcal{U}_n$ consists of  relatively compact open sets.
\item \label{refines} $\mathcal{U}_n\leq \mathcal{W}_n$.
\item \label{stinclusion} 
$ s(\mathcal{U}^1_{n}), t(\mathcal{U}^1_{n}) \leq \mathcal{U}^0_n$. 
\item \label{intersectionseparatingcondition} $\restrict{\mathcal{U}^0_{n+1}}{K_{n}} \leq_{ int}\; t(\mathcal{U}_n^1|_{K_n})\cup s(\mathcal{U}_n^1|_{K_n})$.
\item \label{subspacecondition} $\mathcal{U}_{n+1}^0 \leq \mathcal{U}_n^1|_{\G0}$.
\item \label{contmult} $m(\restrict{\mathcal{U}^1_{n + 1}}{K_n}, \restrict{\mathcal{U}^1_{n + 1}}{K_n}) \leq \mathcal{U}^1_{n}$.  
\item \label{inverse} $(\mathcal{U}^1_{n + 1})^{-1} \leq  \mathcal{U}^1_{n}$.
\end{enumerate}
If additionally $G$ is equipped with a Haar system of measures $\{\mu^x:x\in \G0\}$, then the sequence of covers can be chosen such that, for each $n \ge 0$, we have:
\begin{enumerate}[resume*]
\item \label{haaryfiber}  Fix $\{f^n_j:j \in J_n\}$ a finite partition of unity of $K_n$ whose carriers refine $\mathcal{U}^1_n$.  Let $(\lambda_j)_j \subset \CC$ be any sequence with $|\lambda_j| < n$.  For each open set $U\in \mathcal{U}^1_{n+1}$ and for each $x,y\in s(U)$ we have 
\begin{equation} \label{eqn.approximatespartition}
\left|\mathlarger{\int}_{G} \left(\sum_{j} \lambda_j f^n_j\right)\, d\mu^x - \mathlarger{\int}_{G}  \left(\sum_{j} \lambda_j f^n_j\right)\, d\mu^y\right| < \frac{1}{n}.
\end{equation}
\end{enumerate}
Moreover, if $\sigma : \G2 \to \TT$ is a 2-cocycle, we choose $\{\mathcal{V}_n\}_n$  a normal sequence of finite open covers of $\TT$ such that $\sup_{V\in \mathcal{V}_n}diam(V) \to 0$  as $n \to \infty$  and we can require that the sequence $\{\coverU_n\}$ satisfies: 
\begin{enumerate}[resume*]
\item \label{cocyclecond} $\sigma(\restrict{\coverU^1_n}{K_n}, \restrict{\coverU^1_n}{K_n}) \leq \coverV_n$.
\end{enumerate}
\end{Proposition}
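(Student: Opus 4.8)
The plan is to build the sequence $\{\coverU_n\}$ by induction on $n$, at each stage producing first the object cover $\coverU^0_{n+1}$ and then a sufficiently fine arrow cover $\coverU^1_{n+1}$. The engine is that $G$, being locally compact, Hausdorff and $\sigma$-compact, is Lindel\"of and hence paracompact, so (\cref{paracompactness}) every open cover admits an open star refinement, and by the Lindel\"of and local compactness properties we may always pass to a countable, locally finite refinement by relatively compact sets; this secures \cref{proper} and the refinement part of \cref{refines} at every stage. The construction rests on a monotonicity principle: each requirement in the statement says that an image, product, or value of $\coverU_{n+1}$ under $s$, $t$, $m$, inversion, or $\sigma$ refines or star refines a cover determined by level-$n$ data, and every such requirement survives passage to a finer cover. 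For star-refinement requirements this is the standard fact that $\mathcal{A} \prec \mathcal{B}$ and $\mathcal{B} \leq \mathcal{C}$ give $\mathcal{A} \leq \mathcal{C}$; for \cref{intersectionseparatingcondition} one checks directly that both clauses of $\leq_{int}$ (see \cref{intersectionseparatingrefinement}) persist under refinement. Hence at each stage it suffices to list finitely many target covers, reduce each condition to ``$\coverU_{n+1}$ (or an operation applied to it) refines this target'', and take a common star refinement.

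To construct $\coverU^0_{n+1}$ we use only level-$n$ data. Condition \cref{subspacecondition} and the object component of \cref{refines} require $\coverU^0_{n+1}$ to refine $\restrict{\coverU^1_n}{\G0}$ and star refine $\coverW^0_{n+1}$, while \cref{intersectionseparatingcondition} requires $\restrict{\coverU^0_{n+1}}{K_n}$ to be an intersection separating refinement of $t(\restrict{\coverU^1_n}{K_n})\cup s(\restrict{\coverU^1_n}{K_n})$. This last cover is finite, since $\coverU^1_n$ is locally finite and $K_n$ is compact, and it covers $K_n\cap \G0$ (every object in $K_n$ lies in the source and target of some arrow set meeting $K_n$); the required intersection separating refinement therefore exists by the normality argument in \cref{intersectionseparatingrefinement}, built on the compact set $K_n\cap\G0$ and extended over $\G0$. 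Taking a common refinement of these finitely many targets, intersection separating where \cref{intersectionseparatingcondition} demands it, and then a countable, locally finite, relatively compact version, yields $\coverU^0_{n+1}$.

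We then pick $\coverU^1_{n+1}$ fine enough to meet the remaining conditions, reducing each to a refinement. Condition \cref{stinclusion} at level $n+1$ asks $s(\coverU^1_{n+1}), t(\coverU^1_{n+1}) \leq \coverU^0_{n+1}$; as $s,t$ are continuous and $\coverU^0_{n+1}$ is already fixed, it suffices that $\coverU^1_{n+1}$ refine the $s$- and $t$-preimages of a star refinement of $\coverU^0_{n+1}$. Condition \cref{inverse} reduces, inversion being a homeomorphism, to $\coverU^1_{n+1} \leq (\coverU^1_n)^{-1}$. Conditions \cref{contmult} and \cref{cocyclecond} follow from uniform continuity: $m$ and $\sigma$ are continuous and $\G2\cap(K_n\times K_n)$ (respectively $K_{n+1}\times K_{n+1}$) is compact, so a Lebesgue-number and finite-subcover argument produces an open cover of $G$ any refinement of which drives products (respectively values of $\sigma$) of nearby composable pairs inside a single member of a star refinement of $\coverU^1_n$ (respectively of $\coverV_{n+1}$). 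For the Haar-fiber condition \cref{haaryfiber} we exploit linearity: since $J_n$ is finite, it is enough to control the finitely many continuous functions $g_j(x):=\int_G f^n_j\,d\mu^x$ (continuity is the Haar axiom of \cref{Haarysituation}), because $\left|\sum_j \lambda_j\bigl(g_j(x)-g_j(y)\bigr)\right| < n\sum_j |g_j(x)-g_j(y)|$ whenever $|\lambda_j|<n$; thus we only require $s(\coverU^1_{n+1})$ to refine the open cover of $\G0$ on whose members each $g_j$ oscillates by less than $\tfrac{1}{n^2|J_n|}$. A common star refinement of all these targets with $\coverW^1_{n+1}$, made countable, locally finite and relatively compact, is the desired $\coverU^1_{n+1}$; the base case $n=0$ is obtained by the same tools.

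The most delicate point is the intersection separating condition \cref{intersectionseparatingcondition} together with its coupling to \cref{stinclusion}. Unlike the metric star-refinement requirements, it is precisely the condition---illustrated in \cref{example.needsintersectionrefinement}---that stops the object quotient from identifying objects whose source/target fibers would force incompatible products, so one must verify both that intersection separating refinements are available at every stage and that they remain compatible with the simultaneous demand that $s,t$ of the finer arrow cover star refine the coarser object cover. The quantitative condition \cref{haaryfiber} is the secondary difficulty, since the estimate must hold uniformly over the infinite family of coefficient vectors $(\lambda_j)$; it is the linearity reduction to the finite set $\{g_j\}$ above that makes this uniformity attainable.
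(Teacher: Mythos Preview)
Your proposal is correct and follows the same inductive strategy as the paper: first build $\coverU^0_{n+1}$ from level-$n$ data to satisfy \cref{intersectionseparatingcondition} and \cref{subspacecondition}, then for each remaining condition produce an auxiliary target cover and take $\coverU^1_{n+1}$ to be a common countable, locally finite star refinement of all of them together with $\coverU^1_n$ and $\coverW^1_{n+1}$. The only place the paper is more explicit is condition \cref{contmult}: lacking a metric, it replaces your ``Lebesgue-number and finite-subcover'' shorthand by taking the intersection refinement (in the sense of \cref{intersectionseparatingrefinement}) of a finite family of factor sets covering $K_n$, which is the step that actually yields a single cover $\coverV$ of $K_n$ with $(\coverV \times \coverV)\cap \G2 \prec m^{-1}(\coverU^1_n)$.
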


\begin{proof}

Let $\mathcal{U}_0^0$ be any relatively compact open cover of $\G0$ which star refines $\mathcal{W}_0^0$, and $\coverU_0^1$ any relatively compact open cover of $\gpdG1$ which star refines $\coverW_0^1$, $s^{-1}(\coverU_0^0)$ and $t^{-1}(\coverU_0^0)$ simultaneously.  This ensures $\coverU_0$ satisfies \cref{proper}-\cref{stinclusion}; conditions \cref{intersectionseparatingcondition}-\cref{haaryfiber} deal with the interplay of consecutive covers in the normal sequence, so do not apply to $\coverU_0$.  If we need to satisfy \cref{cocyclecond} as well, then we could modify the construction of $\coverU^1_0$ as described later in this proof.  Since $\coverU_n$ will be chosen to refine $\coverU_0$ for all $n \geq 1$, the fact that $\coverU_0$ sets are relatively compact ensures that sets in each $\coverU_n$ are also relatively compact, hence  we do not need to consider condition \cref{proper} in the rest of the construction.

Now, assume $\mathcal{U}_n$ has been chosen for some  $n\ge 0$.  To construct $\coverU_{n + 1}^0$ we need to satisfy \cref{intersectionseparatingcondition} and \cref{subspacecondition}.  Since $s$ and $t$ are open maps, $s(\restrict{\coverU_n^1}{K_n}) \cup t(\restrict{\coverU_n^1}{K_n})$ is a finite open cover of $K_n \cap \gpdG0$, so we can apply the construction described in \cref{intersectionseparatingrefinement} to get an intersection separating refinement $\coverV$ of it.  Choose any star refinement of $\restrict{\coverU_n^1}{\gpdG0}$ and take $\coverU_{n + 1}^0$ to be a refinement of it such that $\restrict{\coverU_{n + 1}^0}{K_n}$ also refines $\coverV$. 

We construct separately, for $i = \ref{stinclusion}, \ref{contmult}, \ldots, \ref{cocyclecond}$, open covers $\mathcal{V}_i$ of $\gpdG1$ such that $\mathcal{V}_i$ satisfies condition $(i)$ (that is, $s(\coverV_3), t(\coverV_3) \leq \coverU_{n + 1}^0$, and so on for other values of $i$).  Then there exists a locally finite open cover (which we will take to be our $\mathcal{U}_{n + 1}^1$) which star-refines $\mathcal{U}_n^1$, $\coverW_n^1$ and all the $\mathcal{V}_i$'s simultaneously (the existence of such a cover is a consequence of paracompactness); $\coverU_{n + 1}$ would thus be a star-refinement of $\mathcal{U}_n$ satisfying all of \ref{refines}-\ref{cocyclecond}.  Covers satisfying conditions \ref{stinclusion} and \ref{inverse} are easily found using the continuity of the groupoid operations, so we only have to verify that we can find covers $\mathcal{V}_6$, $\mathcal{V}_8$, and $\coverV_9$.


We now proceed to choose a cover $\mathcal{V}_6$ satisfying $m(\restrict{\mathcal{V}_6}{K_n},\restrict{\mathcal{V}_6}{K_n})\leq \mathcal{U}^1_n$.  Denote by $K$ the compact set $(K_n\times K_n) \cap \G2$.  Let $\coverW^{(2)}$ be a finite open refinement  of $\restrict{m^{-1}(\mathcal{U}^1_n)}{K}$ consisting of open sets of the form $(U\times V) \cap \G2$ where $U,V\subset \G1$ are open sets (such sets form a basis for the topology on $\G2$), and let $\coverW$ be the collection of sets $U \times V$ such that $(U \times V) \cap \G2$ appears in $\coverW^{(2)}$.

Let $N = \cup_{W \in \coverW} W$ (an open neighbourhood of $\G2$).  Extend $\coverW$ to a cover $\widetilde{\coverW}$ of $K_n \times K_n$ by adding in the following open sets: for each $(g, h) \in (K_n \times K_n) \setminus \G2$, pick a neighbourhood $U \times V$ of $(g, h)$ such that $U, V \subset G$ are open and $(U \times V) \cap \G2 = \emptyset$ (we can do this since $\G2$ is a closed set), and choose from these sets a finite subcover of the compact set $(K_n \times K_n) \setminus N$.

Let $\mathcal{F}$ be the collection of open sets $U$ for which there exists a $V$ such that either $U \times V$ or $V \times U$ is in the cover $\widetilde{\coverW}$.  Note that $\mathcal{F}$ is a finite cover of $K_n$.  For each $g\in K_n$, let 
$$U_g = \bigcap\limits_{U\in \mathcal{F},\, g\in U} U,$$
and $\coverV =\{U_g:g\in K_n\}$.  Since $\mathcal{F}$ is a finite collection, so is $\coverV$; moreover, $\coverV$ is an open cover of $K_n$ (in fact, $\coverV$ is the intersection refinement of $\mathcal{F}$, see \cref{intersectionseparatingrefinement}).
We claim that $(\coverV \times \coverV) \cap \G2$ refines $\coverW^{(2)}$.  Consider $U_g \times U_h$ for some $U_g, U_h \in \coverV$.  If $(g, h) \in U \times V$ for some $U \times V \in \coverW$ then by construction $U_g \times U_h \subset U \times V$, whence $(U_g \times U_h) \cap \G2 \subset (U \times V) \cap \G2 \in \coverW^{(2)}$.  On the other hand, if $(g, h) \in (K_n \times K_n) \setminus N$, then $(g, h) \in (U \times V)$ for some $U \times V \in \widetilde{\coverW} \setminus \coverW$, in which case $U_g \times U_h \subset U \times V \subset (K_n \times K_n) \setminus \G2$, and so $(U_g \times U_h) \cap \G2 = \emptyset$.
It follows that $(\coverV \times \coverV) \cap \G2 \prec \coverW^{(2)} \prec \restrict{m^{-1}(\mathcal{U}_n^1)}{K}$.  So $\coverV$ satisfies the multiplication condition on $K_n$, and to finish we just have to extend $\coverV$ to a cover of $G$.  We do this by adding in any cover of $G \setminus K_n$ by relatively compact sets, and we define $\mathcal{V}_6$ to be the resulting cover.

Note that the construction of $\coverV_9$ can be performed in a similar manner to that of $\coverV_6$, using the continuity of $\sigma : \G2 \to \TT$ and the chosen normal sequence of covers for $\TT$; as a consequence, we omit the details.

Suppose now that $G$ is equipped with a Haar system of measures and we also want to satisfy condition \cref{haaryfiber}. By the continuity of the Haar system, $x \mapsto \int_G f^n_j\,d\mu^x$ is continuous for each $j$, so using this and the triangle inequality we can find a cover $\coverV_8^0$ of $\G0$ such that for $x, y \in V \in \coverV_8^0$ and $|\lambda_j| < n$ we have
$$\left|\int_G \left(\sum\limits_j \lambda   _j f_j^n\right)\,d\mu^x - \int_G \left(\sum\limits_j \lambda_j f_j^n\right)\,d\mu^y\right| < \frac{1}{n}.$$
Choose $\coverV_8 = s^{-1}(\coverV_8^0)$. 

Use the covers $\coverV_i$ and $\coverW_n$ to construct a new cover $\coverU_{n + 1}$ as already described, concluding the proof.
\end{proof}
\begin{Definition}
   We call a normal sequence of covers which satisfies properties \cref{proper}-\ref{inverse} in \cref{Coverings} a \textbf{groupoid normal sequence} for $G$ (we include condition \ref{haaryfiber} as well if $G$ has a Haar system of measures, and condition \ref{cocyclecond} if there is an associated 2-cocycle).
\end{Definition}
\begin{Remark} \label{CoveringsExplanation}
In \cref{Coverings} we constructed a normal sequence which we constrained by a list of requirements.  We briefly explain the significance of each of these properties towards the construction of the approximation groupoid $G_\alpha$.
\begin{itemize} 
    \item The normal sequence $\{\coverW_n\}$ appearing in the hypotheses would usually come from a sequence of functions $\{f_n\}$, as explained in \cref{coverorfunction}.  This normal sequence is used in \cref{refines}, which says we can choose $\{\coverU_n\}$ such that the functions $f_n$ induce uniformly continuous functions $\overline{f}_n$ on the resulting approximation groupoid $G_\coverU$, in such a way that each $f_n$ is the pullback of $\overline{f}_n$.  This technique will be used to show $C_c(G)$ is the limit of continuous compactly supported functions on the approximations, see \cref{sexyapproximationtheorem}.
    \item \cref{proper} guarantees that the quotient map is a proper map (as shown in \cref{propermapping}).
    \item \cref{stinclusion} ensures that, in the quotient groupoid resulting from such a normal sequence, the target and source maps are well defined and continuous (as shown in \cref{quotientconstruction}).
    \item \cref{intersectionseparatingcondition} guarantees that our quotients are bona fide groupoids and not inverse semigroupoids (i.e. if $g$ and $h$ are arrows in the quotient with the target of $g$ equaling the source of $h$ we will have that $hg$ is an arrow, see Claim 3 in the proof of \cref{quotientconstruction}).
    \item \cref{subspacecondition} gives us that the object space topology induced by the covers $\{\mathcal{U}^0_n\}$ is the subspace topology defined by the inclusion $\gpdG0 \subseteq \gpdG1$ with the topology on $\gpdG1$ induced by the normal sequence $\{\mathcal{U}^1_n\}$. 
    \item \cref{contmult} makes sure that the partial multiplication on the quotient groupoid is well-defined and continuous (as shown in \cref{quotientconstruction}).
    \item \cref{inverse} gives us that the inversion map is a homeomorphism of our quotient (as shown in \cref{quotientconstruction}).
    \item \cref{haaryfiber} allows us to push the Haar system to a Haar system on the quotient groupoid (as shown in \cref{Haarsystemapproximation}). 
    \item \cref{cocyclecond} ensures we can push the cocycle $\sigma$ on $G$ to a cocycle $\sigma_\alpha$ on the quotient $G_\alpha$ (as shown in \cref{Haarsystemapproximation}).
    \item \cref{haaryfiber} and \cref{cocyclecond} guarantee that the induced map on convolution algebras $C_c(G_\alpha,\sigma_\alpha) \hookrightarrow C_c(G,\sigma)$ is a $*$-embedding (as shown in \cref{Haarsystemapproximation}).
\end{itemize}
\end{Remark}




The following is well known and will be used throughout the proofs in this section.

\begin{Fact}\label{closed}
Let $X$ and $Y$ be locally compact Hausdorff spaces. If $f:X\to Y$ is a proper and continuous function then $f$ is closed.
\end{Fact}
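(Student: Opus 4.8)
The plan is to prove directly that the image of an arbitrary closed set is closed, localizing the problem near a limit point by means of a compact neighborhood supplied by local compactness. So let $C \subseteq X$ be closed; I want to show $f(C)$ is closed in $Y$, and to that end I would fix a point $y \in \overline{f(C)}$ and argue that in fact $y \in f(C)$.

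Since $Y$ is locally compact Hausdorff, I can choose a compact neighborhood $K$ of $y$, and write $U = \mathrm{int}(K)$ for its interior, so that $y \in U \subseteq K$. Properness of $f$ then gives that $f^{-1}(K)$ is compact, whence the closed subset $C \cap f^{-1}(K)$ is compact as well; its continuous image $f(C \cap f^{-1}(K))$ is therefore compact, and since $Y$ is Hausdorff this image is closed in $Y$. The strategy is to exhibit $y$ as an element of this particular closed set.

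It then remains to place $y$ inside it. The key observation is that $y$ lies in the closure of the truncated image $f(C) \cap K$: for any neighborhood $V$ of $y$, the set $V \cap U$ is again a neighborhood of $y$, and since $y \in \overline{f(C)}$ it must meet $f(C)$, so $V$ meets $f(C) \cap U \subseteq f(C) \cap K$. Moreover $f(C) \cap K \subseteq f(C \cap f^{-1}(K))$, since any $z = f(c) \in f(C) \cap K$ forces $c \in C \cap f^{-1}(K)$. Combining these,
$$ y \in \overline{f(C) \cap K} \subseteq \overline{f(C \cap f^{-1}(K))} = f(C \cap f^{-1}(K)) \subseteq f(C), $$
where the equality holds because $f(C \cap f^{-1}(K))$ is closed. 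Hence $f(C)$ contains all of its limit points and is closed.

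The only genuinely delicate step is this localization: one must pass from the global hypothesis $y \in \overline{f(C)}$ to membership in the closure of the compactly-truncated image $f(C) \cap K$, and this is precisely where the interior of the compact neighborhood $K$ gets used. Everything else is a routine interplay of properness (to produce a compact preimage, and hence a compact truncated image) and the Hausdorff property (to guarantee that compact images are closed).
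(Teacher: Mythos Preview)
Your proof is correct: the localization via a compact neighborhood of $y$, followed by the observation that $f(C\cap f^{-1}(K))$ is compact and hence closed in the Hausdorff space $Y$, is the standard argument, and you have handled the one subtle point (passing from $y\in\overline{f(C)}$ to $y\in\overline{f(C)\cap K}$ using the interior $U$ of $K$) cleanly.

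As for comparison with the paper: there is nothing to compare against, since the paper simply records this fact as ``well known'' and gives no proof. Your argument is precisely the kind of short verification one would expect to fill in here.
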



\begin{Theorem} \label{quotientconstruction}
Suppose $G$ is a $\sigma$-compact groupoid with open source and target maps.  For any normal sequence $\{\mathcal{W}_n:n\ge 0\}$ of open covers (see \cref{normalsequencespace}) there exists a pseudo-metric structure $(G,d)$ on $G$ such that the induced metrizable quotient $G_\alpha$ of $(G,d)$ is a second countable, locally compact, Hausdorff groupoid.  Moreover, each morphism in the chain $\begin{tikzcd} G\arrow{r}{id_G} & (G,d) \arrow{r}{p_\alpha} & G_\alpha \end{tikzcd}$
is proper and continuous, and the pre-image of the cover of $G_\alpha$ by balls of radius $\frac{1}{2^n}$ refines $\mathcal{W}_n$.
\end{Theorem}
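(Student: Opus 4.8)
The plan is to feed the given normal sequence into \cref{Coverings} and then let the general pseudo-metric-quotient machinery of \cref{topapprox} and \cref{propermapping} do the topological work, reserving the real effort for checking that the structural conditions make the groupoid operations descend. First I apply \cref{Coverings} to $\{\mathcal{W}_n\}$ to obtain a groupoid normal sequence $\{\mathcal{U}_n\}$ of countable, locally finite, relatively compact open covers satisfying \cref{proper}--\cref{inverse}. The arrow covers $\{\mathcal{U}_n^1\}$ determine, by the construction following \cref{normalsequencespace}, a pseudo-metric $d$ on $\G1$; I set $G_\alpha$ to be the resulting metric quotient $\overline{(G,d)}$ and $p_\alpha$ the quotient map. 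Properness of $id_G$ and of the composite $p_\alpha \circ id_G$ is exactly the first item of \cref{propermapping} (using \cref{proper}); $G_\alpha$ is Hausdorff because it is metric; it is second countable because the covers are countable and $G$ is $\sigma$-compact (hence Lindel\"of); and it is locally compact because, by the second item of \cref{propermapping}, $p_\alpha$ sends the relatively compact balls $B(x,\epsilon)$ onto the balls $B(\overline{x},\epsilon)$, which therefore have compact closure.

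The core of the proof is to descend the algebraic operations and verify their continuity. I take the object space of $G_\alpha$ to be $p_\alpha(\G0)$, which is closed (as $p_\alpha$ is proper, hence closed by \cref{closed}) and carries the expected topology by \cref{subspacecondition}. For source and target, \cref{stinclusion} says $s(\mathcal{U}_n^1)$ and $t(\mathcal{U}_n^1)$ star-refine $\mathcal{U}_n^0$, so pseudo-metrically indistinguishable arrows have indistinguishable sources and targets; hence $s$ and $t$ pass to well-defined continuous maps on $G_\alpha$. Inversion descends to a homeomorphism by \cref{inverse}. Multiplication descends by \cref{contmult}: if $x \sim x'$ and $y \sim y'$ with $(x,y),(x',y') \in \G2$, then inside each $K_n$ that condition forces $xy$ and $x'y'$ into a common member of $\mathcal{U}_n^1$, so $xy \sim x'y'$ and $m$ is well-defined and continuous on the composable pairs of the quotient.

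The main obstacle -- and the reason \cref{intersectionseparatingcondition} is imposed in \cref{Coverings} -- is to rule out the pathology of \cref{example.needsintersectionrefinement} and show that $G_\alpha$ is a genuine groupoid rather than an inverse semigroupoid. Concretely, I must prove that whenever $s([x]) = t([y])$ in $G_\alpha$ there exist representatives $x' \sim x$ and $y' \sim y$ with $s(x') = t(y')$ in $G$, so that $[x][y]$ is realised by an honest composite. Here I would show that the intersection separating refinement of \cref{intersectionseparatingrefinement} forces each identified object class to sit inside a single intersection cell of $s(\mathcal{U}_n^1) \cup t(\mathcal{U}_n^1)$ (the refinement clause of $\leq_{int}$), while keeping the closures of incompatible classes disjoint (the separation clause); together these let me realise the identification $t(x) \sim s(y)$ by genuinely composable representatives. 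Tracking this compatibility across the exhaustion $\{K_n\}$, in tandem with \cref{contmult}, is where essentially all the difficulty lies; once composition is honest, associativity, the unit law and the inverse law transfer from $G$ along the surjection $p_\alpha$.

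Finally, the refinement claim follows from \cref{refines} and the nesting inclusion \cref{nesting}: since $p_\alpha$ collapses precisely the pseudo-distance-zero points, the pre-image under $p_\alpha \circ id_G$ of the cover of $G_\alpha$ by $\tfrac{1}{2^n}$-balls is exactly the cover of $G$ by the pseudo-metric balls $B(x,\tfrac{1}{2^n})$. By \cref{nesting} each such ball lies in $st(x,\mathcal{U}_n)$, and \cref{refines} gives $st(\mathcal{U}_n,\mathcal{U}_n) \prec \mathcal{W}_n$, so this star lies in a member of $\mathcal{W}_n$; hence the pre-image cover refines $\mathcal{W}_n$, as required.
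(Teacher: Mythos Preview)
Your outline is essentially the paper's: feed $\{\mathcal{W}_n\}$ through \cref{Coverings}, build the pseudo-metric quotient via \cref{propermapping}, and then check that conditions \cref{stinclusion}--\cref{inverse} make $s$, $t$, inversion, and multiplication descend. The topological conclusions and the final refinement claim are handled just as in the paper.

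The one place your sketch does not yet contain the decisive idea is the composability claim, and your description of how \cref{intersectionseparatingcondition} is used is slightly off. You say the refinement clause of $\leq_{int}$ forces an identified object class into a single intersection cell, and the separation clause keeps incompatible classes apart; but knowing this at each level does not by itself produce composable representatives. The paper's argument is contrapositive and then compactness-driven: assume $[s(g)]=[t(h)]$; if for some $n$ the closures $\overline{[[s(g)]]}$ and $\overline{[[t(h)]]}$ (with respect to the intersection refinement of $s(\mathcal{U}_n^1|_{K_n})\cup t(\mathcal{U}_n^1|_{K_n})$) were disjoint, then the \emph{separation} clause in \cref{intersectionseparatingcondition} would force $\mathcal{U}^0_{n+1}$ to separate $s(g)$ from $t(h)$, contradicting $s(g)\sim t(h)$. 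Hence for all $n\ge N$ the compact sets $T_n := s\big(\,\overline{st(g,\mathcal{U}^1_n)}\,\big)\cap t\big(\,\overline{st(h,\mathcal{U}^1_n)}\,\big)$ are nonempty; they are nested and sit inside a fixed $K_N$, so $\bigcap_n T_n$ contains some $z$. One then chooses $g_n\in\overline{st(g,\mathcal{U}^1_n)}$ with $s(g_n)=z$ and $h_n\in\overline{st(h,\mathcal{U}^1_n)}$ with $t(h_n)=z$, extracts limit points $g'\sim g$ and $h'\sim h$ in $K_N$, and concludes $s(g')=t(h')=z$. This nested-compact-set and limit-point extraction is the missing ingredient in your sketch; ``together these let me realise the identification'' does not yet name it.
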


\begin{proof}

Let $\{K_n\}$ be a groupoid exhaustion by compact neighborhoods of $G$ (see \cref{groupoidexhuastion}), and use \cref{Coverings} to construct a groupoid normal sequence $\{\mathcal{U}_n\}_{n\ge 0}$ satisfying \cref{proper}-\cref{inverse} for $G$.

Let $d^i:\Gi \times \Gi$ be the psuedo-metric induced on $\Gi$ by the sequence of covers $\{\mathcal{U}^i_n:n\ge 0\}$ for $i =0,1$ (see \cref{topapprox} for details).  Recall that we defined an equivalence relation $x\sim^i y$ if and only if for every $n \ge 0$ there exists $U\in \mathcal{U}_n^i$ such that $x,y\in U$, and that the Hausdorff (and hence metrizable) quotient of $(G, d)$ is the quotient by this relation. Denote the equivalence class of $x\in G^{(i)}$ by $[x]^i$. Let $\Gi_\alpha$ be the Hausdorff quotient of the pseudo-metric space $(\Gi, d^i)$, and denote by $p^i_\alpha$ the quotient map $(\Gi, d^i) \to \Gi_\alpha$. Let $q_\alpha^i:\Gi\to \Gi_\alpha$ be equal to $p^i_\alpha \circ id_G $. By \cref{propermapping}, $q_\alpha^i$ is a proper and continuous map; as a consequence, $G_\alpha$ is locally compact.  We still need to check that $G_\alpha$ has an induced groupoid structure; furthermore, that $G_\alpha$ with the induced topology is a second countable topological groupoid. 

%
%
%
\sn
\textbf{Claim: } Condition \cref{stinclusion} for a groupoid normal sequence (see \cref{Coverings}) guarantees that $g\sim_1 h$ implies both $s(g) \sim_0 s(h)$ and $t(g)\sim_0 t(h)$. 


Since $g\sim_1 h$, we have that, for every $n \in \NN$, there exists $U\in \mathcal{U}^1_n$ such that $g,h\in U$; then, as $s(U)$ is contained in an element $V$ of $\mathcal{U}_n^0$ by condition \cref{stinclusion}, we get $s(g), s(h) \in V \in \mathcal{U}_n^0$.  Hence $s(g)\sim_0 s(h)$, and similarly $t(g)\sim_0 t(h)$.  This shows that we can define source and target maps from $\G1_\alpha$ to $\G0_\alpha$, which we will also call $s$ and $t$ (as the meaning will be clear from context), as follows:

$$s([g]^1) = [s(g)]^0 \text{ and } t([g]^1) = [t(g)]^0.$$
\sn
\textbf{Claim:} $s,t : \G1_\alpha\to \G0_\alpha$ are continuous.

We show this for $s$; the proof that $t$ is continuous is similar. Consider $V$ a relatively compact  open neighborhood of $[x]^0 \in \G0_\alpha$.  Let $g \in \G1$ be any element of $s^{-1}(x)$; we will find a neighbourhood $V_1$ of $[g]^1$ in $\G1_\alpha$ such that $s(V_1) \subseteq V$. Since $(p_\alpha^0)^{-1}(V)$ is open in $(\gpdG0, d^0)$, by \cref{propermapping} there exists an $l \in \NN$ such that $W = st(x,\mathcal{U}_l^0)$ satisfies $W \subset (p^0_\alpha)^{-1}(V)$.  Let $U = st(g,\coverU_l^1)$.  Any $U' \in \coverU_l^1$ which contains $g$ necessarily has $x \in s(U')$; by assumption \cref{stinclusion} on the covers, there exists some $W' \in \coverU^0_l$ for which $s(U') \subset W'$, and $x \in W'$ implies $W' \subset W$.  Since $U$ is the union of all such sets $U'$ it follows that $s(U) \subset W$, where by construction $W$ was chosen such that $p_\alpha^0(W) \subset V$.

Applying \cref{propermapping} again, $U_1 := B(g, \frac{1}{2^l}) \subset U$.  Then $V_1 := p_\alpha^1(U_1)$ is an open set in $\G1_\alpha$ (since $p_\alpha^1$ is an open map) and we have
$$s(V_1) = s(p_\alpha^1(U_1)) = p_\alpha^0(s(U_1)) \subset p_\alpha^0(s(U)) \subset p_\alpha^0(W) \subset V,$$
allowing us to conclude that $s : \G1_\alpha \to \G0_\alpha$ is continuous.
\sn
\textbf{Claim}: The topology on $\gpdG0_\alpha$ induced by the covers $\{\coverU_n^0\}$ is the same as the subspace topology induced by the inclusion $\G0_\alpha \subset \G1_\alpha$. 

From condition \cref{subspacecondition} on the groupoid normal sequence (i.e. $\mathcal{U}_{n+1}^0 \leq \mathcal{U}_n^1|_{\G0}$ for $n \geq 1$) we get $st(x,\mathcal{U}_{n + 1}^0) \leq st(x,\mathcal{U}^1_{n})|_{\G0}$ for $n\ge 0$.  Hence the topology induced by the covers $\{\mathcal{U}_n^0\}$ is finer than the subspace topology on $\G0_\alpha$ induced by the covers $\{\mathcal{U}^1_n\}$. Conversely, use the fact that $s : \G1_\alpha \to \G0_\alpha$ is continuous (as proven above); then the identity map $\restrict{s}{\G0_\alpha}$ is still continuous, whence the subspace topology on $\G0_\alpha$ induced by the topology on $\G1_\alpha$ is finer than the topology on $\G0_\alpha$ induced by the covers $\{\mathcal{U}_n^0\}$.
\sn
We now turn our attention to the partial multiplication on $G_\alpha$.  Define $\G2_\alpha$ to be the collection of pairs $([g]^1,[h]^1)$ of elements of $\G1_\alpha$ with $[s(g)]^0 = [t(h)]^0$, and  $m_\alpha:\G2_\alpha \to \G1_{\alpha}$ by 
$$m_\alpha([g]^1,[h]^1) = [\{g'h' \in G: g'\in [g]^1, h'\in [h]^1 \text{ and } s(g') = t(h')\}].$$ 
The above will not define a multiplication operation on $\G2_\alpha$ if there exist $g, h \in G$ with $[s(g)]^0 = [t(h)]^0$ for which the set on the right hand side of the definition is empty. In the next claim we show this situation cannot occur. 
\sn
\textbf{Claim:} Condition \cref{intersectionseparatingcondition} on the groupoid normal sequence gives us that if $g,h\in G$ satisfy $[s(g)] = [t(h)]$ then there exist $g' \sim g$ and $h' \sim h$ such that $s(g') = t(h')$, i.e. $(g', h') \in \G2$. 

We will use the notation and terminology from \cref{intersectionseparatingrefinement}. Note that $\restrict{\mathcal{U}_n^1}{K_n}$ is a finite cover of $K_n$; denote the sets in the intersection refinement cover $(s(\restrict{\mathcal{U}_n^1}{K_n}) \cup t(\restrict{\mathcal{U}_n^1}{K_n}))'$ of $s(K_n)\cup t(K_n)$ by $U^n_x$; that is, $U^n_x = (\cap_U\, s(U)) \cap (\cap_V\, t(V))$, where the intersections are taken over all $U \in \coverU^1_n$ which intersect $K_n$ and $s^{-1}(x)$ and over all $V \in \coverU^1_n$ which intersect $K_n$ and $t^{-1}(x)$. 

Choose $N \ge 0$ large enough so that $st(g, \coverU_N^1), st(h, \coverU_N^1) \subset int(K_N)$.  If there exits $n \ge N$ such that  $\overline{U^n_{s(g)}} \cap \overline{U^n_{t(h)}} =\emptyset$ then $\overline{[[s(g)]]}\cap \overline{[[t(h)]]} = \emptyset$, and since $\restrict{\mathcal{U}_{n+1}^0}{K_n}\leq_{int}\; (s(\mathcal{U}_n^1|_{K_n}) \cup t(\mathcal{U}_n^1|_{K_n}))$, it must be the case that $\coverU^0_{n + 1}$ separates $s(g)$ and $t(h)$; that is, $[s(g)] \neq [t(h)]$, contradicting assumptions.

Hence we must have $\overline{U^n_{s(g)}} \cap \overline{U^n_{t(h)}} \neq \emptyset$ for every $n\ge N$.  Clearly $U^n_{s(g)} \subset s(st(g, \coverU^1_n))$; since each $U \in \coverU^1_n$ is relatively compact and $s$ is continuous, it is easy to see that $s(\cl{U}) = \cl{s(U)}$, so $\cl{s(st(g, \coverU^1_n))} = s(\,\cl{st(g, \coverU^1_n)}\,)$, whence $\cl{U^n_{s(g)}} \subset s(\,\cl{st(g, \coverU^1_n)}\,)$, and similarly with $s$ replaced by $t$ and $g$ replaced by $h$.  We can thus conclude that $s(\cl{st(g, \coverU^1_n)}) \cap t(\cl{st(h, \coverU^1_n)}) \not= \emptyset$ for all $n \geq N$. 

For $n \geq N$ let $T_n := s(\cl{st(g, \coverU^1_n)}) \cap t(\cl{st(h, \coverU^1_n)})$, a closed (in fact, compact) set contained in the compact set $K_N$.  Note that $\{T_n\}$ is a decreasing sequence of closed sets when ordered by inclusion, and hence has the finite intersection property.  It follows that there exists a $z \in \cap_{n \geq N} T_n$.

From the definition of $T_n$, for each $n$ there exists a $g_n \in \cl{st(g, \coverU^1_n)}$ such that $s(g_n) = z$ and an $h_n \in \cl{st(h, \coverU^1_n)}$ such that $t(h_n) = z$.  Since $\{g_n\}, \{h_n\} \subset K_N$ we can find limit points $g'$ of $\{g_n\}$ and $h'$ of $\{h_n\}$ respectively.  By the continuity of the source and target maps we must have $s(g') = t(h') = z$.  Moreover, from the construction it is clear that $d(g_n, g) \to 0$, so it follows that $g \sim g'$ and similarly $h \sim h'$, concluding the proof of the claim.

\sn
\textbf{Claim:} $m_\alpha([g]^1,[h]^1) = [gh]^1$ for $(g, h) \in \G2$.  

Fix any $(g, h) \in \G2$.  We check that $m_\alpha([g]^1,[h]^1) \subset [gh]^1$ (the other inclusion is obvious from the definition).  Suppose $g'\in [g]^1$ and $h'\in [h]^1$ are such that $g'h'$ is defined.  Choose $n_0 \geq 2$ large enough so $g, h, g', h' \in K_{n_0}$.  For every $n \geq n_0$, there exists $U, V \in \mathcal{U}_n$ such that $g,g' \in U$ and $h,h'\in V$ (since $g \sim g'$ and $h \sim h'$); it follows from condition \ref{contmult} on the groupoid normal sequence that $gh,g'h' \in m(U \cap K_n, V \cap K_n) \subset W$ for some $W$ an element of $\mathcal{U}_{n-1}$.  Since this holds for any $n \geq n_0$, we get $gh \sim g'h'$, so $g'h' \in [gh]^1$.  
\sn
\textbf{Claim:} $m_\alpha : \G2_\alpha \to \G1_\alpha$ is continuous.

Showing that $m_\alpha$ is continuous is similar to the proof that the source map is continuous, though, as in other claims involving the multiplication, one has to make sure to work in the framework of a specific $K_n$ for $n$ large enough.

\sn
\textbf{Claim:} There is a well-defined, continuous inverse operation on $\G1_\alpha$.

Define $([g]^1)^{-1} = [g^{-1}]^1$. From the definition of multiplication, it is immediate that $m([g]^1,([g]^1)^{-1}) = [id_{t(g)}]^1$ and that $m(([g]^1)^{-1},[g]^1) = [id_{s(g)}]^1$. Again, one can check that the inversion mapping on $G$ with the induced pseudo-metric is continuous to conclude that the inversion map is continuous on the metric quotient $G_\alpha$.

\sn
It follows that $G_\alpha$ is a topological groupoid. Each of the functions in the composition 
$$\begin{tikzcd} G\arrow{r}{id_G} & (G,d) \arrow{r}{p_\alpha} & G_\alpha \end{tikzcd}$$
is continuous. By Theorem 14 on page 7 in \cite{Isbell}, the pre-image of the cover of $G_\alpha$ by balls of radius $1$ refines $\coverW_0$, and using induction on the same argument we get that the pre-image of balls of radius $\frac{1}{2^{n}}$ refines $\mathcal{W}_n.$ 
%
\end{proof}

\begin{Theorem} \label{Haarsystemapproximation}
Suppose $G$ is a $\sigma$-compact groupoid  equipped with a Haar system of measures $\{\mu^x:x\in \G0\}$ and a 2-cocycle $\sigma : \G2 \to \TT$.  Given any normal sequence of open covers $\coverW_n$ of $G$, one can apply the construction of \cref{quotientconstruction} to get a second countable approximation $G_\alpha$ of $G$.  Additionally, moreover, the construction can be modified so $G_\alpha$ admits a Haar system of measures and a 2-cocycle $\sigma_\alpha$ for which the canonical pullback map $C_c(G_\alpha, \sigma_\alpha) \to C_c(G, \sigma)$ is a $*$-morphism of topological $*$-algebras (with respect to the I-norm or inductive limit topology).  
\end{Theorem}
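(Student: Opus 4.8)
The plan is to run the machinery of \cref{Coverings} and \cref{quotientconstruction} and then transport the analytic data across the resulting quotient map. First I would apply \cref{Coverings} to the given normal sequence $\{\coverW_n\}$ to obtain a groupoid normal sequence $\{\coverU_n\}$ satisfying \emph{all} of the listed conditions, in particular \cref{haaryfiber} (for the Haar system) and \cref{cocyclecond} (for the cocycle); here one fixes in advance the covers $\{\coverV_n\}$ of $\TT$ with $\sup_{V\in\coverV_n}\diam(V)\to 0$, together with the partitions of unity $\{f_j^n\}$ used in \cref{haaryfiber}. Feeding $\{\coverU_n\}$ into \cref{quotientconstruction} produces a second countable, locally compact, Hausdorff groupoid $G_\alpha$ and a continuous surjection $q_\alpha:G\to G_\alpha$ which is proper, hence closed by \cref{closed}, and therefore a topological quotient map. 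What remains is to push $\{\mu^x\}$ and $\sigma$ to $G_\alpha$ and invoke \cref{inducedmorphism}.

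The heart of the argument is constructing the Haar system on $G_\alpha$, which I would do via \cref{Haarquotient}: it suffices to show that for each $f\in C_c(G_\alpha)$ the integral $x\mapsto \int_G (f\circ q_\alpha)\,d\mu^x$ is constant on the fibers of $q_\alpha^0:\G0\to\G0_\alpha$. Fix $x,y\in\G0$ with $x\sim_0 y$. Using the refinement part of \cref{subspacecondition} (each $V\in\coverU_m^0$ lies inside some $U\in\coverU_{m-1}^1$) one sees $x\sim_0 y$ forces $x\sim_1 y$ for the identity arrows, so for every $n$ there is $U\in\coverU_{n+1}^1$ with $x,y\in U$, whence $x,y\in s(U)$. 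Now approximate: for $n$ large, use the fixed partition of unity $\{f_j^n\}$ subordinate to $\coverU_n^1$ and set $\lambda_j=(f\circ q_\alpha)(\xi_j)$ for a point $\xi_j$ in each carrier (and $\lambda_j=0$ on carriers missing $\supp(f\circ q_\alpha)$). Since $f\circ q_\alpha$ is uniformly continuous for the pseudo-metric and the $d$-diameters of the sets in $\coverU_n^1$ shrink to $0$ (by the nesting in \cref{propermapping}), we get $\|f\circ q_\alpha-\sum_j\lambda_j f_j^n\|_\infty\to 0$, while $|\lambda_j|<n$ eventually. Applying \cref{haaryfiber} to the element $U\in\coverU_{n+1}^1$ above and controlling the error with \cref{compactmaxmeasure} yields
\begin{equation*}
\left|\int_G (f\circ q_\alpha)\,d\mu^x-\int_G (f\circ q_\alpha)\,d\mu^y\right|\le 2M\left\|f\circ q_\alpha-\sum_j\lambda_j f_j^n\right\|_\infty+\frac1n,
\end{equation*}
where $M$ bounds $\mu^z$ uniformly on a fixed compact neighborhood of $\supp(f\circ q_\alpha)$. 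Letting $n\to\infty$ shows the two integrals agree, so \cref{Haarquotient} produces a Haar system $\{\nu^y\}$ on $G_\alpha$ making $q_\alpha$ Haar system preserving.

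For the cocycle I would push $\sigma$ forward directly. If $(g,h),(g',h')\in\G2$ satisfy $g\sim_1 g'$ and $h\sim_1 h'$, then for all large $n$ there are $U,V\in\coverU_n^1$ with $g,g'\in U$ and $h,h'\in V$, so both $\sigma(g,h)$ and $\sigma(g',h')$ lie in $\sigma((U\times V)\cap\G2)$. By \cref{cocyclecond} this set sits inside a single element of $\coverV_n$, whose diameter tends to $0$; hence $\sigma(g,h)=\sigma(g',h')$. Thus $\sigma$ is constant on the fibers of $(q_\alpha\times q_\alpha)|_{\G2}$ and descends to $\sigma_\alpha:\G2_\alpha\to\TT$ with $\sigma=\sigma_\alpha\circ(q_\alpha\times q_\alpha)$; continuity of $\sigma_\alpha$ follows since $q_\alpha\times q_\alpha$ is a proper quotient onto $\G2_\alpha$, and the cocycle identity for $\sigma_\alpha$ follows from that for $\sigma$ together with the surjectivity of $q_\alpha$. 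Consequently $q_\alpha$ is cocycle preserving.

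Finally, $q_\alpha:(G,\sigma)\to(G_\alpha,\sigma_\alpha)$ is a proper, surjective, Haar system preserving and cocycle preserving morphism, so \cref{inducedmorphism} immediately gives that the pullback $q_\alpha^*:C_c(G_\alpha,\sigma_\alpha)\to C_c(G,\sigma)$ is a $*$-morphism of topological $*$-algebras and, by surjectivity, is $I$-norm preserving. The single genuine obstacle is the fiber-constancy of the Haar integrals: the cocycle and topological parts are essentially bookkeeping packaged into \cref{Coverings}, whereas that step demands the delicate uniform approximation of $f\circ q_\alpha$ by the fixed partition-of-unity combinations so that \cref{haaryfiber} applies, with \cref{compactmaxmeasure} taming the measures of the shrinking supports.
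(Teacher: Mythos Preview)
Your proposal is correct and follows essentially the same route as the paper: both use the groupoid normal sequence from \cref{Coverings}, establish fiber-constancy of $x\mapsto\int_G (f\circ q_\alpha)\,d\mu^x$ by approximating $f\circ q_\alpha$ with the fixed partition-of-unity combinations $\sum_j\lambda_j f_j^n$ and invoking condition \cref{haaryfiber}, and descend $\sigma$ via condition \cref{cocyclecond}. The only packaging difference is that you invoke \cref{Haarquotient} to obtain the Haar system on $G_\alpha$ (relying on $q_\alpha$ being proper, hence closed, hence a quotient map), whereas the paper defines $\mu^{[x]}$ directly via Riesz--Markov--Kakutani and reproves continuity by hand using the same $\epsilon$-estimate; your route is slightly cleaner but not substantively different.
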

\begin{proof} 
Let $\{K_n\}$ be a groupoid exhaustion by compact neighborhoods of $G$ (see \cref{groupoidexhuastion}), and use \cref{Coverings} to construct a groupoid normal sequence $\{\mathcal{U}_n\}$ satisfying conditions \ref{proper}-\ref{cocyclecond}.  The proof of \cref{quotientconstruction} shows how to construct the quotient groupoid $G_\alpha$ (and proves that the various groupoid operations are well defined); below, we show that we also get an induced Haar system on $G_\alpha$, and an induced 2-cocycle.  Denote by $(G, d)$ the groupoid $G$ with the topology generated by the pseudo-metric $d$ induced by $\{\coverU_n\}$.

We begin with the easier part, which is to construct $\sigma_\alpha$. Notice that $d(x,y) = 0$ for $(x, y) \in \G2$ implies, by condition \cref{cocyclecond} in \cref{Coverings}, that $\sigma(x) = \sigma(y)$. We may therefore define $\sigma_\alpha([x]) = \sigma(x)$. It is straightforward to check that this is continuous and satisfies the cocycle condition (see \cref{defcocycle}).

We move on to address the statements about Haar systems. Denote by $p_\alpha$ the metric quotient map $(G, d) \to G_\alpha$.  Consider $f \in C_c(G_\alpha)$, and let $\tilde{f}$ denote the composition $G \xrightarrow{id_G} (G,d) \xrightarrow{p_\alpha} G_\alpha \xrightarrow{f} \CC$. We will show that, given $\epsilon > 0$, there exists an $n$ such that if $x, y \in U \in \mathcal{U}_n^0$ then $|\int\tilde{f}\,d\mu^x - \int \tilde{f}\,d\mu^y|< \epsilon$; hence $x \sim y$ implies that $\int \tilde{f}\,d\mu^x = \int \tilde{f}\,d\mu^y$.

Since $p_\alpha \circ\, id_G$ is proper   (see \cref{quotientconstruction}), $\tilde{f} \in C_c(G)$.  Choose $M > \sup \mu^x(\supp(\tilde{f}))$ with $M > 0$, where the supremum is taken over all $x \in \gpdG0$ (the fact that the supremum exists follows from \cref{compactmaxmeasure}).  Let $\epsilon' =\frac{\epsilon}{3M}$.

Cover $\supp(\tilde{f})$ by open sets $\{V_1, \ldots, V_{k}\}$ where each $V_i$ is of the form  $\{y\in G: |\tilde{f}(x_i) - \tilde{f}(y)| < \epsilon'\}$ for some $x_i \in \G0$.  By a Lebesgue lemma argument and by \cref{propermapping}, there exists $m\ge 0$ such that, for each fixed $i$ and each $x \in V_i$ we have $U_x := st(x,\mathcal{U}^1_m) \subset V_i$.  We may assume that $m$ has also been chosen large enough so that $|\tilde{f}(x)|< m$ for all $x \in G$ and that $\supp(\tilde{f}) \subset int(K_m)$ (both of which we can do due to the fact that $\supp(\tilde{f})$ is compact), and also such that $\frac{1}{m} < \frac{\epsilon}{3}$.  

Consider the (finite) partition of unity $\{f^m_j\}_{j \in J_m}$ that we relied on in the construction of the groupoid normal sequence (see Property \ref{haaryfiber} of \cref{Coverings}).  We approximate $\tilde{f}$ by these functions as follows: for each $j$, if $\supp(f^m_j) \subset \supp(\tilde{f})$ then choose any $x_j \in \text{int}(\supp(f^m_j))$ and let $\lambda_j = \tilde{f}(x_j)$, else let $\lambda_j = 0$.  Note that $|\lambda_j| < m$ for each $j$.  Since the supports of the partition $\{f^m_j\}$ are subordinate to the cover $\{\mathcal{U}_m\}$, for any $x \in \supp(f^m_j)$ we have $|\tilde{f}(x) - \lambda_j| < \epsilon'$ (this is still the case when $\lambda_j = 0$ by choice, since if $\supp(f^m_j) \not\subset \supp(\tilde{f})$ then there exists a $y \in \supp(f^m_j)$ for which $\tilde{f}(y) = 0$, and so $|\tilde{f}(x)| < \epsilon'$ on $\supp(f^m_j)$).  We then have for any $x \in K_m$
\begin{align*}
|\tilde{f}(x) - \sum\limits_{j \in J_m} \lambda_j f^m_j(x)| &= |\tilde{f}(x) \sum\limits_{j \in J_m} f^m_j(x) - \sum\limits_{j \in J_m} \lambda_j f^m_j(x)| \\
&\leq \sum\limits_{j \in J_m} |\tilde{f}(x) - \lambda_j| \cdot f^m_j(x)
\end{align*}
For any fixed $j$, if $x \in \supp(f_j^m)$ then $|\tilde{f}(x) - \lambda_j| < \epsilon'$ by construction, and otherwise $f^m_j(x) = 0$.  In either case it is true that $|\tilde{f}(x) - \lambda_j|\cdot f^m_j(x) \leq \epsilon' f^m_j(x)$, and since the $f^m_j$'s add up to 1 at each $x \in K_m$ we get 
$$|\tilde{f}(x) - \sum\limits_{j \in J_m} \lambda_j f^m_j(x)| \leq \epsilon',$$ 
as desired.  Moreover, by construction, $\supp\left(\sum \lambda_j f^m_j\right) \subset \supp(\tilde{f}) \subset int(K_m)$.

It now easily follows that, for any $x \in \gpdG0$,
$$\left|\int_G \tilde{f}\,d\mu^x - \int_{G} \sum_{j} \lambda_j f_j^m\,d\mu^x\right| \leq \epsilon' \mu^x(\supp(\tilde{f})) <\epsilon'M = \frac{\epsilon}{3}.$$
By Property \cref{haaryfiber} of the groupoid normal system, for any $U \in \mathcal{U}^1_m$ we have $x, y \in s(U)$ implies (for $|\lambda_j| < m$, which we arranged for in the choice of $m$ and $\lambda_j$)
$$\left|\int_{G} \sum_{j} \lambda_j f_j^m\,d\mu^x - \int_{G} \sum_{j} \lambda_j f_j^m\,d\mu^y\right| < \frac{1}{m}<\frac{\epsilon}{3}.$$
Hence the triangle inequality gives us that for any $U \in \mathcal{U}_m$ and $x, y \in s(U)$ we have
$$\left|\int_{G} \tilde{f}\,d\mu^x - \int_{G} \tilde{f}\,d\mu^y\right| < \epsilon.$$
As a consequence of property \cref{intersectionseparatingcondition} of the groupoid normal sequence, $\restrict{\mathcal{U}^0_{m + 1}}{K_m}$ refines $\{s(U) \cap K_m : U \in \mathcal{U}^1_m\}$.  Hence for the given $\epsilon > 0$, if we take any $V \in \mathcal{U}^0_{m + 1}$ then $x, y \in (V \cap K_m) \subseteq s(U)$ implies 
\begin{equation}
\left|\int_{G} \tilde{f}\,d\mu^x - \int_{G} \tilde{f} \,d\mu^y\right| < \epsilon. \label{eqn.Haarsystemapproximation.continuity}    
\end{equation}
In particular it follows that for $x \sim y$ we must have $\int_G \tilde{f} \,d\mu^x = \int_G \tilde{f} \,d\mu^y$.  

For each $[x] \in \gpdG_\alpha^0$, we can thus define a positive linear functional on $\mathcal{C}_c\left(G_\alpha\right)$ by $f \mapsto \int_G \tilde{f} \,d\mu^x$, where $x$ is chosen to be any representative of $[x]$.  By the Riesz-Markov-Kakutani theorem, this defines a unique regular Radon measure $\mu^{[x]}$ on $G_\alpha$ (supported on $G_\alpha^{[x]}$); we claim that the collection $\{\mu^{[x]} : [x] \in G_\alpha^{0}\}$ defines a Haar system.

The fact that 
$$\int_{G_\alpha} f(y)\,d\mu^{[t(g)]} = \int_{G_\alpha} f(gy) \,d\mu^{[s(g)]}$$
follows as in the proof of \cref{Haarquotient}. Next, we need to prove continuity of $[x] \mapsto \int_{G_\alpha} f(y)\,d\mu^{[x]}$ for fixed $f \in \mathcal{C}_c(G_\alpha)$.  Consider $[x] \in \G0_\alpha$ with representative $x \in \G0$, and choose $N \geq 0$ such that $st(x, \coverU_N) \subset K_{N - 1}$.  Given $\epsilon > 0$ we can use the proof of  \cref{eqn.Haarsystemapproximation.continuity} to find an $n > N$ such that for any $V \in \mathcal{U}_{n}^0$ and $y, z \in V \cap K_{n - 1}$ we have
\begin{equation*}
\left|\int_{G} \tilde{f}\,d\mu^y - \int_{G} \tilde{f} \,d\mu^z\right| < \epsilon.    
\end{equation*}
In the pseudo-metric $d$, the cover by $\frac{1}{2^n}$-balls refines $\mathcal{U}_n$, so there exists a $W \in \mathcal{U}_n$ such that $\{y: d(x, y) < \frac{1}{2^n}\} \subset W$; since $st(x, \coverU_n) \subset st(x, \coverU_N) \subset K_{N - 1}$, we have $W \subset K_{n - 1}$.  It follows that for all $[y] \in \G0_\alpha$ for which $d([x], [y]) < \frac{1}{2^n}$ we have 
\begin{equation*}
\left|\int_{G_\alpha} f\,d\mu^{[x]} - \int_{G_\alpha} f\,d\mu^{[y]}\right| < \epsilon,    
\end{equation*}
proving the continuity condition of the Haar system.  This concludes the proof of the fact that $\{\mu^{[x]} : [x] \in \G0_\alpha\}$ is a Haar system on $G_\alpha$. The fact that the pullback map induces a *-morphism of convolution algebras follows from \cref{inducedmorphism}.
\end{proof}

\begin{Theorem}\label{sexyapproximationtheorem}
If $(G,\sigma)$ is a $\sigma$-compact groupoid with Haar system of measures and 2-cocycle $\sigma:\to \TT$, then $G$ admits an inverse approximation by second countable and locally compact groupoids $\{(G_\alpha,\sigma), p^\beta_\alpha, A\}$ as in \cref{approximate} and, furthermore, each groupoid in the approximation admits a Haar system of measures which makes all the bonding maps and projections mappings onto the inverse system Haar system preserving.
\end{Theorem}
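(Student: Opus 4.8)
The plan is to assemble the individual approximations produced by \cref{quotientconstruction} and \cref{Haarsystemapproximation} into a genuine inverse system and then to identify its limit with $G$. First I would take as indexing set $A$ the collection of all \emph{groupoid normal sequences} $\alpha = \{\mathcal{U}^\alpha_n\}$ for $G$ (in the sense following \cref{Coverings}, so each satisfies conditions \cref{proper}--\cref{cocyclecond}), preordered by cofinal refinement: $\alpha \ge \beta$ iff $\{\mathcal{U}^\alpha_n\}$ cofinally refines $\{\mathcal{U}^\beta_n\}$. This set is directed, since given $\alpha,\beta$ one takes any normal sequence $\{\mathcal{W}_n\}$ refining both as covers and feeds it into \cref{Coverings} to produce a groupoid normal sequence $\gamma$ with $\mathcal{U}^\gamma_n \le \mathcal{W}_n$, which then cofinally refines both $\alpha$ and $\beta$. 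To each $\alpha$ the constructions of \cref{quotientconstruction} and \cref{Haarsystemapproximation} attach a second countable, locally compact, Hausdorff groupoid $G_\alpha$, a proper continuous surjective quotient morphism $q_\alpha \colon G \to G_\alpha$, a Haar system $\{\mu^{[x]}_\alpha\}$ and a cocycle $\sigma_\alpha$, with $(q_\alpha)_*\mu^x = \mu^{q_\alpha(x)}_\alpha$ and $q_\alpha^*\sigma_\alpha = \sigma$.

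Next I would build the bonding maps. When $\alpha \ge \beta$, \cref{cofinallemma} gives that $id_G \colon (G,\{\mathcal{U}^\alpha_n\}) \to (G,\{\mathcal{U}^\beta_n\})$ is uniformly continuous; concretely $x \sim_\alpha y$ forces $x \sim_\beta y$, so $[x]_\alpha \mapsto [x]_\beta$ descends to a well-defined continuous map $p^\alpha_\beta \colon G_\alpha \to G_\beta$ with $p^\alpha_\beta \circ q_\alpha = q_\beta$. Since the $q$'s are surjective groupoid morphisms, $p^\alpha_\beta$ is a surjective groupoid morphism, and cocycle compatibility $\sigma_\beta \circ p^\alpha_\beta = \sigma_\alpha$ is immediate from $\sigma_\gamma([x]) = \sigma(x)$. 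Properness follows from a diagram chase: for compact $K \subseteq G_\beta$ one has $(p^\alpha_\beta)^{-1}(K) = q_\alpha(q_\beta^{-1}(K))$ (using surjectivity of $q_\alpha$ together with $q_\beta = p^\alpha_\beta \circ q_\alpha$), which is compact because $q_\beta$ is proper and $q_\alpha$ continuous. Haar system preservation is then a one-line check: for $f \in C_c(G_\beta)$ we get $\int_{G_\alpha}(f \circ p^\alpha_\beta)\,d\mu^{[x]}_\alpha = \int_G (f \circ q_\beta)\,d\mu^x = \int_{G_\beta} f\,d\mu^{[x]}_\beta$ from the defining property of the two pushed-forward systems, so $p^\alpha_\beta$ is Haar system preserving by \cref{haarpreserving}; the same identity shows each projection $q_\alpha$ is Haar system preserving. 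The relations $p^\alpha_\alpha = id$ and $p^\alpha_\beta \circ p^\beta_\gamma = p^\alpha_\gamma$ hold because every bonding map descends from $id_G$, so $\{G_\alpha, p^\alpha_\beta, A\}$ is an inverse system of exactly the type demanded by \cref{approximate}.

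The crux is to prove $\varprojlim_\alpha G_\alpha = G$. I would exhibit the canonical morphism $\Phi \colon G \to \varprojlim_\alpha G_\alpha$, $x \mapsto (q_\alpha(x))_\alpha$, and argue it is an isomorphism of topological groupoids. Injectivity uses that $G$, being locally compact, Hausdorff and $\sigma$-compact, is paracompact, so by condition \cref{uniform} of \cref{paracompactness} its topology is generated by a uniform structure whose covers are \emph{all} open covers; distinct points $x \ne y$ are separated by some open cover $\mathcal{U}$, which I extend to a normal sequence and refine via \cref{Coverings} to a groupoid normal sequence $\alpha$ whose top cover refines $\mathcal{U}$, forcing $x \not\sim_\alpha y$ and hence $q_\alpha(x) \ne q_\alpha(y)$. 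Surjectivity onto the limit is where properness is essential: given a compatible thread $(z_\alpha)_\alpha$, the sets $q_\alpha^{-1}(z_\alpha)$ are nonempty (surjectivity) and compact (properness), and compatibility makes $\{q_\alpha^{-1}(z_\alpha)\}$ a downward-directed family with the finite intersection property inside the single compact set $q_{\alpha_0}^{-1}(z_{\alpha_0})$, so their intersection contains a point $x$ with $\Phi(x) = (z_\alpha)_\alpha$. Finally, to see $\Phi$ is a homeomorphism I would check that the topology of $G$ coincides with the initial topology induced by $\{q_\alpha\}$: by the openness of the metric quotient (\cref{propermapping}) each $d_\alpha$-ball $B(x,2^{-n})$ is the $q_\alpha$-preimage of a ball in $G_\alpha$, and by \cref{nesting} it is trapped between $st(x,\mathcal{U}^\alpha_{n+1})$ and $st(x,\mathcal{U}^\alpha_n)$; as $\alpha$ ranges over all groupoid normal sequences these stars form a base for the uniform topology of $G$, so the $q_\alpha$-preimages of opens form a subbase for the topology of $G$, which is precisely the statement that $\Phi$ is open onto the limit.

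I expect the main obstacle to be this last identification of the limit, specifically verifying that the family $\{q_\alpha\}$ jointly generates the topology of $G$ (so that $\Phi$ is a homeomorphism, not merely a continuous bijection). This is exactly where the uniform-space machinery of \cref{topapprox}, the cofinality of groupoid normal sequences among all normal sequences supplied by \cref{Coverings}, and \cref{coverorfunction} carry the weight; by contrast, the surjectivity-via-properness argument and the Haar-preservation and cocycle-preservation checks are short once the directed system has been set up.
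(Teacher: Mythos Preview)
Your proposal is correct and follows essentially the same approach as the paper: index by groupoid normal sequences ordered by cofinal refinement, use \cref{Coverings} for directedness, descend bonding maps from $id_G$, and identify the limit with $G$ via the cofinality of groupoid normal sequences among all normal sequences. You supply considerably more detail than the paper does on the properness and Haar-preservation of the bonding maps and on the homeomorphism $G \cong \varprojlim_\alpha G_\alpha$ (the paper dispatches the latter in one sentence by pointing to \cref{topapprox} and \cref{Coverings}), but the skeleton is the same.
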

%
%
\begin{proof}
Let $\mathcal{N}$ denote the collection of groupoid normal sequences on $G$.  $\mathcal{N}$ with the order  $\alpha \leq \beta$ if $\alpha$ cofinally refines $\beta$ is directed (i.e. forms a filter under the poset structure) because the collection of groupoid normal sequences are cofinal in the collection of all normal sequences.

If $\gamma \leq \theta$ then the equivalence relation determined by the normal sequence $\gamma$ has strictly smaller equivalence classes than the equivalence relation determined by $\theta$; furthermore, there is a unique function $q^\theta_\gamma :G_\theta \to G_\gamma$ such that $q_\gamma = q^\theta_\gamma \circ q_\theta$. One can also easily check that $q_\theta^\gamma$ is a proper surjective groupoid homomorphism. 

$\mathcal{N}$ induces an inverse system $\{G_\alpha,  q^\alpha_\beta:G_\alpha\to G_\beta, \mathcal{N}\}$ of groupoids.  We claim that $G$ is the inverse limit of this system with projection maps given by $\{q_\alpha:G\to G_\alpha:\alpha \in \mathcal{N}\}$. It is easy to see that the natural map $G\to \varprojlim_\alpha G_\alpha$ given by $g \to (q_\alpha(g))_\alpha$ is a homeomorphism (see \cref{topapprox}) as the groupoid normal sequences are, by \cref{Coverings},  cofinal (in the order induced by cofinal refinement) in the directed set of all normal sequences of covers for $G$. It is clearly a groupoid homomorphism and thus it is topological groupoid isomorphism.

In the case where $G$ has a Haar system of measures and 2-cocycle $\sigma$, we know that $G_\alpha$ can be chosen to have a Haar system of measures and cocycle $\sigma_\alpha$ such that the cannonical pullback morphisms $C_c(G_\alpha,\sigma_\alpha) \to C_c(G,\sigma)$ are *-morphisms. The maps $q^\alpha_\beta:G_\alpha \to G_\beta$ have the property that $(q^\alpha_\beta)^*:C_c(G_\beta,\sigma_\beta) \to C_c(G_\alpha,\sigma_\alpha)$ are *-morphisms for all $\alpha \ge \beta$ (it follows from essentially the same argument as \cref{Haarsystemapproximation}). For purely topological considerations, we have that $\varinjlim_\alpha C_c(G_\alpha,\sigma_\alpha) = C_c(G,\sigma)$ in the category of topological *-algebras over $\CC$.
\end{proof}

\begin{Corollary}
Let $G$ and $\{G_\alpha,p^\alpha_\beta,A\}$ be as in \cref{sexyapproximationtheorem}. The canonical pullback maps induce $I$-norm preserving $*$-embeddings $C_c(G_\alpha,\sigma_\alpha) \to C_c(G_\beta,\sigma_\beta)$ of twisted convolution algebras when $\beta \ge \alpha$ and, furthermore, we have that  $C_c(G,\sigma) = \varinjlim_\alpha C_c(G_\alpha,\sigma_\alpha)$ in the category of topological *-algebras over $\CC$.
\end{Corollary}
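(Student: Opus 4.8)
The plan is to assemble this statement from \cref{sexyapproximationtheorem}, \cref{inducedmorphism}, and the factorization result \cref{coverorfunction}; almost all of the work has already been done, so the task is largely one of bookkeeping. By \cref{sexyapproximationtheorem} the bonding maps $p^\beta_\alpha : G_\beta \to G_\alpha$ (for $\beta \ge \alpha$) and the projections $q_\alpha : G \to G_\alpha$ are proper, surjective, Haar system preserving and cocycle preserving groupoid morphisms. The Corollary thus splits into two tasks: (i) upgrade the pullbacks of the bonding maps to $I$-norm preserving $*$-embeddings, and (ii) identify $C_c(G,\sigma)$ with the direct limit of the induced directed system.

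For (i), I would feed each bonding map $p^\beta_\alpha$ into \cref{inducedmorphism}. Since $p^\beta_\alpha$ is cocycle and Haar system preserving, that proposition produces a $*$-morphism
$$(p^\beta_\alpha)^* : C_c(G_\alpha,\sigma_\alpha) \to C_c(G_\beta,\sigma_\beta),$$
and, because $p^\beta_\alpha$ is surjective, the same proposition gives that $(p^\beta_\alpha)^*$ is $I$-norm preserving. Any $I$-norm preserving map is injective (if $(p^\beta_\alpha)^* f = 0$, then $\|f\|_I = \|(p^\beta_\alpha)^* f\|_I = 0$, forcing $f = 0$), so each $(p^\beta_\alpha)^*$ is an isometric $*$-embedding. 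Functoriality of the pullback turns the relations $p^\beta_\alpha \circ p^\gamma_\beta = p^\gamma_\alpha$ into $(p^\gamma_\beta)^* \circ (p^\beta_\alpha)^* = (p^\gamma_\alpha)^*$, so these embeddings assemble into a genuine directed system of twisted convolution algebras.

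For (ii), note that the projection pullbacks $(q_\alpha)^* : C_c(G_\alpha,\sigma_\alpha) \to C_c(G,\sigma)$ are likewise $I$-norm preserving $*$-morphisms by \cref{inducedmorphism}, and the identities $q_\alpha = p^\beta_\alpha \circ q_\beta$ dualize to $(q_\beta)^* \circ (p^\beta_\alpha)^* = (q_\alpha)^*$, exhibiting $C_c(G,\sigma)$ as a compatible cocone over the directed system. To see it is the colimit, I would show every $f \in C_c(G,\sigma)$ lies in the image of some $(q_\alpha)^*$. Viewing $f$ as a continuous map $G \to \CC$ into the separable metric space $\CC$, \cref{coverorfunction} supplies an index $\alpha$ and a uniformly continuous $f_\alpha : G_\alpha \to \CC$ with $f = f_\alpha \circ q_\alpha$. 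Because $q_\alpha$ is proper and surjective, $f_\alpha$ vanishes off the compact set $q_\alpha(\supp f)$ and hence lies in $C_c(G_\alpha,\sigma_\alpha)$; thus $f = (q_\alpha)^* f_\alpha$. Combined with the injectivity and compatibility from (i), this shows $C_c(G,\sigma)$ is the directed union $\bigcup_\alpha (q_\alpha)^*\big(C_c(G_\alpha,\sigma_\alpha)\big)$, which is precisely the direct limit in the category of topological $*$-algebras over $\CC$.

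The step most in need of care is the last one: confirming that the function $f_\alpha$ produced by \cref{coverorfunction} genuinely belongs to $C_c(G_\alpha,\sigma_\alpha)$ (i.e. that properness and surjectivity of $q_\alpha$ force its compact support), and that the inductive-limit topology on $C_c(G,\sigma)$ is the one carried by the colimit. These are exactly the places where the topological hypotheses on the approximation are used, via the properness established in \cref{propermapping} and the factorization of \cref{coverorfunction}; but neither presents a genuine difficulty once the directed system structure from (i) is in hand, so the Corollary is essentially a repackaging of \cref{sexyapproximationtheorem} with the isometric embedding property made explicit.
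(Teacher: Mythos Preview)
Your proposal is correct and aligns with the paper's approach: the paper does not give a separate proof for this Corollary, but the last paragraph of the proof of \cref{sexyapproximationtheorem} already records that the bonding pullbacks are $*$-morphisms (via the argument of \cref{Haarsystemapproximation}, hence \cref{inducedmorphism}) and that $\varinjlim_\alpha C_c(G_\alpha,\sigma_\alpha) = C_c(G,\sigma)$ ``for purely topological considerations''. Your write-up makes the $I$-norm isometry explicit via the surjectivity clause of \cref{inducedmorphism} and spells out the ``topological considerations'' using \cref{coverorfunction}; the only point worth noting is that the hypothesis \ref{item.coverrefinement} of \cref{coverorfunction} is satisfied here precisely because the groupoid normal sequences were shown in the proof of \cref{sexyapproximationtheorem} to be cofinal among all normal sequences, so your invocation of that lemma is legitimate.
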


\begin{Corollary}
Let $G$ be a $\sigma$-compact groupoid with Haar system of measures and 2-cocycle $\sigma:\G2\to \TT$ and let $\{(G_\alpha,\sigma_\alpha),p^\alpha_\beta:A\}$ be the inverse approximation as in the previous theorem. The maximal groupoid $C^*$-algebra functor takes the directed system $\{C_c(G_\alpha,\sigma_\alpha),(p^\alpha_\beta)^*,A\}$ to a directed system $\{C^*(G_\alpha,\sigma_\alpha),(p^\alpha_\beta)^*,A\}$ and, moreover, $\varinjlim_\alpha C^*(G_\alpha,\sigma_\alpha) = C^*(G,\sigma)$.
\end{Corollary}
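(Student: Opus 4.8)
The plan is to exploit the fact that the maximal completion is defined by a universal property, so that it carries the inductive limit of convolution algebras to the inductive limit of their maximal completions. Rather than compare the maximal norms on $C_c(G,\sigma)$ and on the $C_c(G_\alpha,\sigma_\alpha)$ directly, I will construct a $*$-isomorphism by producing mutually inverse $*$-homomorphisms and checking that they invert one another on dense subalgebras.

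First I would verify that applying the functor produces a genuine directed system. For $\alpha\ge\beta$ the bonding pullback $(p^\alpha_\beta)^*\colon C_c(G_\beta,\sigma_\beta)\to C_c(G_\alpha,\sigma_\alpha)$ is an $I$-norm preserving $*$-morphism by \cref{inducedmorphism} (as recorded in \cref{sexyapproximationtheorem} and its corollary). Composing it with the canonical map $C_c(G_\alpha,\sigma_\alpha)\to C^*(G_\alpha,\sigma_\alpha)$, and recalling that every $I$-norm continuous representation is $I$-contractive so that the maximal norm is dominated by the $I$-norm, one obtains an $I$-bounded $*$-representation of $C_c(G_\beta,\sigma_\beta)$, which by the universal property of the maximal norm extends uniquely to a $*$-morphism $(p^\alpha_\beta)^*\colon C^*(G_\beta,\sigma_\beta)\to C^*(G_\alpha,\sigma_\alpha)$. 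Because the relations $(p^\alpha_\gamma)^*=(p^\alpha_\beta)^*\circ(p^\beta_\gamma)^*$ hold on the dense subalgebra $C_c(G_\gamma,\sigma_\gamma)$ and both sides are continuous, they persist on the completions; this is exactly the directed system $\{C^*(G_\alpha,\sigma_\alpha),(p^\alpha_\beta)^*,A\}$.

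Next I would build the two comparison maps. Since each projection $q_\alpha\colon G\to G_\alpha$ is proper, surjective, and Haar system (and cocycle) preserving, \cref{inducedmorphism} makes $q_\alpha^*\colon C_c(G_\alpha,\sigma_\alpha)\to C_c(G,\sigma)$ an $I$-norm preserving $*$-morphism, which extends as above to $q_\alpha^*\colon C^*(G_\alpha,\sigma_\alpha)\to C^*(G,\sigma)$. From $q_\beta=p^\alpha_\beta\circ q_\alpha$ we get $q_\beta^*=q_\alpha^*\circ(p^\alpha_\beta)^*$ on $C_c$ and hence on the completions, so the family $\{q_\alpha^*\}$ is a compatible cocone and the universal property of the inductive limit yields $\Phi\colon\varinjlim_\alpha C^*(G_\alpha,\sigma_\alpha)\to C^*(G,\sigma)$ with $\Phi\circ j_\alpha=q_\alpha^*$, where $j_\alpha$ denotes the structure map into the limit. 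In the other direction, using $C_c(G,\sigma)=\varinjlim_\alpha C_c(G_\alpha,\sigma_\alpha)$ (the previous corollary), the compatible family $C_c(G_\alpha,\sigma_\alpha)\to C^*(G_\alpha,\sigma_\alpha)\to\varinjlim_\alpha C^*(G_\alpha,\sigma_\alpha)$ induces a continuous $*$-homomorphism $\psi\colon C_c(G,\sigma)\to\varinjlim_\alpha C^*(G_\alpha,\sigma_\alpha)$; a one-line estimate (the structure maps and the canonical maps are contractive for the maximal norm, which is dominated by the $I$-norm, and $q_\alpha^*$ is $I$-isometric) shows $\|\psi(f)\|\le\|f\|_I$, so $\psi$ is $I$-bounded and extends to $\Psi\colon C^*(G,\sigma)\to\varinjlim_\alpha C^*(G_\alpha,\sigma_\alpha)$.

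Finally I would check that $\Phi$ and $\Psi$ are mutually inverse on dense subalgebras. For $f=q_\alpha^*f_\alpha\in C_c(G,\sigma)$ one has $\Psi(f)=j_\alpha(f_\alpha)$ (viewing $f_\alpha$ in $C^*(G_\alpha,\sigma_\alpha)$) and $\Phi(j_\alpha(f_\alpha))=q_\alpha^*f_\alpha=f$, so $\Phi\Psi$ is the identity on the dense subalgebra $C_c(G,\sigma)$ and therefore on all of $C^*(G,\sigma)$; symmetrically, the elements $j_\alpha(f_\alpha)$ are dense in the limit and are fixed by $\Psi\Phi$, giving $\Psi\Phi=\mathrm{id}$. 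Hence $\Phi$ is a $*$-isomorphism and $\varinjlim_\alpha C^*(G_\alpha,\sigma_\alpha)=C^*(G,\sigma)$. I expect the only real obstacle to be the verification that $\psi$ is $I$-bounded (equivalently, that it does not increase the maximal norm): this is precisely where the compatibility of the $I$-norms along the surjective Haar system preserving bonding maps enters, and it is what makes the \emph{maximal} completion --- in contrast to the reduced one --- commute with inductive limits, so that once it is in hand the isomorphism is forced by the two universal properties without any direct comparison of maximal norms.
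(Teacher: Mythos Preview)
Your proof is correct, but it takes a different and more explicit route than the paper. The paper argues in two lines: it invokes \cref{inducedmorphism} to get the directed system, then asserts that a $C^*$-algebra $A$ is the direct limit of $\{A_\alpha\}$ whenever there is a compatible family of maps $A_\alpha\to A$ whose images have dense union, and observes that the images of the $C^*(G_\alpha,\sigma_\alpha)$ in $C^*(G,\sigma)$ contain $C_c(G,\sigma)$. In other words, the paper only builds your map $\Phi$ and declares surjectivity with dense image sufficient.

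Your argument instead constructs both $\Phi$ and its inverse $\Psi$ explicitly via the two universal properties in play (that of the inductive limit and that of the maximal completion), and verifies $\Phi\Psi=\mathrm{id}$ and $\Psi\Phi=\mathrm{id}$ on dense subalgebras. This is more work but also more honest: the characterization the paper invokes, taken literally, only yields a \emph{surjection} $\varinjlim_\alpha C^*(G_\alpha,\sigma_\alpha)\twoheadrightarrow C^*(G,\sigma)$; one would still need to know that the maps $C^*(G_\alpha,\sigma_\alpha)\to C^*(G,\sigma)$ are isometric (equivalently, that the maximal norm on $C_c(G_\alpha,\sigma_\alpha)$ agrees with the restriction of the maximal norm on $C_c(G,\sigma)$) to conclude it is an isomorphism. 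Your construction of $\Psi$ circumvents this issue entirely, since the $I$-boundedness of $\psi$ (which you verify via $\|j_\alpha(f_\alpha)\|\le\|f_\alpha\|_{max,\alpha}\le\|f_\alpha\|_{I}=\|q_\alpha^*f_\alpha\|_{I}$) is exactly what the universal property of the maximal completion needs, and it delivers injectivity of $\Phi$ for free. The paper's approach buys brevity; yours buys a self-contained argument that does not rely on an implicitly assumed norm comparison.
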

\begin{proof}
The first assertion follows from \cref{inducedmorphism}. We just need to prove the second assetion about direct limits.

Recall that a $C^*$-algebra is the direct limit of a directed system $\{A_\alpha,p^\alpha_\beta,D\}$ if there exists a mapping of the inverse system to $A$ that commutes with the system and such that the union of the images of the $C^*$-algebras in the system are dense in $A$. It is evident that the mapping $i^\alpha_\beta:C_c(G_\alpha,\sigma_\alpha) \hookrightarrow C_c(G,\sigma)$ induces a morphism $j^\alpha_\beta:C^*(G_\alpha,\sigma_\alpha) \to C^*(G,\sigma)$ and it is also clear that this pullback morphism commutes with all the other pullback morphisms in the direct system. Notice also that the union of the images clearly contains $C_c(G,\sigma)$ and hence is dense. 
\end{proof}

%
%
%
%
Applying the construction of \cref{mainthm.approximation} to the specific examples of groups and topological spaces respectively, we easily obtain the following two corollaries, stated here for future reference:

\begin{Corollary}
If $G$ is a $\sigma$-compact group with  cocycle $\sigma$ then there exists an inverse approximation $\{(G_\alpha,\sigma_\alpha),p^\alpha_\beta,A\}$ of $G$ by second countable groups $G_\alpha$ with cocycle $\sigma_\alpha$ and proper continuous and cocycle preserving bonding homomorphisms $p^\alpha_\beta$  such that  $C^*(G,\sigma)$ (the maximal completion) is an inductive limit of the directed system $\{C^*(G_\alpha,\sigma_\alpha),(p^\alpha_\beta)^*,A\}$.
\end{Corollary}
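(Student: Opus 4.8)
The plan is to specialize \cref{sexyapproximationtheorem} and its $C^*$-algebraic corollary to the single-object case, so almost all the work has already been done. First I would observe that a topological group $G$ is exactly a topological groupoid whose object space $\gpdG0$ is the single point $\{e\}$, where $e$ is the identity: the source and target maps are then the constant map onto $e$ (trivially open, so the standing hypothesis of open $s,t$ is satisfied), every ordered pair of arrows is composable, and a Haar system $\{\mu^x : x \in \gpdG0\}$ is nothing but a single Haar measure $\mu := \mu^e$ supported on $G = G^e$. Thus the hypotheses of \cref{sexyapproximationtheorem} hold verbatim: $G$ is $\sigma$-compact, locally compact, Hausdorff, carries a Haar system, and is equipped with the $2$-cocycle $\sigma$.

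Applying \cref{sexyapproximationtheorem}, I obtain an inverse approximation $\{(G_\alpha,\sigma_\alpha),p^\alpha_\beta,A\}$ of $G$ by second countable, locally compact, Hausdorff groupoids, each carrying an induced Haar system and cocycle, with proper, continuous, surjective bonding maps $p^\alpha_\beta$ that are Haar system preserving and cocycle preserving. The one point requiring verification is that each $G_\alpha$ is again a \emph{group}, i.e.\ has a single object. This is immediate from the construction in \cref{quotientconstruction}: the object space $\gpdG0_\alpha$ is the metrizable quotient of $(\gpdG0, d^0)$, and since $\gpdG0 = \{e\}$ is a single point, $\gpdG0_\alpha$ is a single point as well. (In particular the delicate identification phenomenon of \cref{example.needsintersectionrefinement} is vacuous here, since there are no distinct objects that could be confused.) Consequently each $G_\alpha$ is a second countable, locally compact, Hausdorff topological group; its induced Haar system is a single Haar measure $\mu_\alpha$; and each groupoid morphism $p^\alpha_\beta$, being a functor between one-object groupoids, is precisely a continuous group homomorphism that is proper, surjective, and cocycle preserving. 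Each canonical projection $q_\alpha \colon G \to G_\alpha$ is likewise a proper continuous group homomorphism, and ``Haar system preserving'' reduces to the statement that $(q_\alpha)_*\mu = \mu_\alpha$.

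For the $C^*$-algebraic conclusion I would simply invoke the preceding Corollary applied to this system: the maximal groupoid $C^*$-algebra functor carries the directed system $\{C_c(G_\alpha,\sigma_\alpha),(p^\alpha_\beta)^*,A\}$ to a directed system $\{C^*(G_\alpha,\sigma_\alpha),(p^\alpha_\beta)^*,A\}$ with $\varinjlim_\alpha C^*(G_\alpha,\sigma_\alpha) = C^*(G,\sigma)$. Since in the one-object case the twisted groupoid convolution formula of p.\ \pageref{ccdefinition} collapses to the usual twisted group convolution, the twisted groupoid $C^*$-algebra $C^*(G_\alpha,\sigma_\alpha)$ is exactly the twisted group $C^*$-algebra, and likewise for $C^*(G,\sigma)$; thus the inductive limit above is precisely the asserted description of the maximal completion.

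I expect no genuine obstacle in this argument: its entire content is the verification that the single-object property is preserved by the quotient construction, and this is automatic because a quotient of a one-point space is a one-point space. The only care required is bookkeeping---confirming that open source/target maps, the Haar system, the cocycle, and properness, continuity, and surjectivity all translate correctly between the group and single-object-groupoid languages---and each of these translations is routine.
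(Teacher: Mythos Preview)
Your proposal is correct and follows exactly the approach the paper intends: the paper gives no separate proof for this corollary, merely stating that it is obtained by ``applying the construction of \cref{mainthm.approximation} to the specific example of groups.'' Your only addition is the explicit (and immediate) verification that the quotient of a one-point object space is again a one-point space, so that each $G_\alpha$ is a group; this is precisely the bookkeeping the paper leaves to the reader.
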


\begin{Corollary}
If $X$ is a locally compact, Hausdorff, and  $\sigma$-compact space then  there exists an inverse approximation $\{X_\alpha,p^\alpha_\beta,A\}$ of $X$ by locally compact, Hausdorff, and second countable spaces $X_\alpha$ with proper and continuous bonding maps such that $C_0(X)$ is the directed limit of the dual directed system $\{C_0(X_\alpha),(p^\alpha_\beta)^*,A\}$.
\end{Corollary}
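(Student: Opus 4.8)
The plan is to realize $X$ as a groupoid and simply feed it into the approximation machinery already developed. Let $G$ be the \emph{unit groupoid} on $X$, i.e. $\G0 = \G1 = X$ with every arrow a unit, so that $s = t = \mathrm{id}_X$, inversion is the identity, and multiplication is $x\cdot x = x$. Since $\mathrm{id}_X$ is open and $X$ is locally compact, Hausdorff and $\sigma$-compact, $G$ is a $\sigma$-compact groupoid with open source and target maps. Equip $G$ with the point-mass Haar system $\mu^x = \delta_x$ — which is supported on $G^x = \{x\}$, is trivially left invariant, and is continuous because $x \mapsto \int f\,d\delta_x = f(x)$ — and with the trivial $2$-cocycle $\sigma \equiv 1$. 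Reading off the formulas on p.~\pageref{ccdefinition}, since the only $y$ with $t(y) = s(x) = x$ is $y = x$, the convolution collapses to pointwise multiplication $f*g = fg$, the adjoint to conjugation $f^* = \overline{f}$, and $\|f\|_I = \sup_x|f(x)| = \|f\|_\infty$. Thus $C_c(G,\sigma) = C_c(X)$ as a commutative $*$-algebra and $C^*(G,\sigma) = C_0(X)$.

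First I would apply \cref{sexyapproximationtheorem} (the technical form of \cref{mainthm.approximation}) to this $G$. This produces an inverse approximation $\{(G_\alpha,\sigma_\alpha), p^\alpha_\beta, A\}$ of $G$ by second countable, locally compact, Hausdorff groupoids with proper continuous surjective bonding maps, together with induced Haar systems and the compatible directed system of convolution algebras whose maximal-completion limit recovers $C^*(G,\sigma)$.

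The key step — and the only point not entirely immediate — is to verify that each $G_\alpha$ is again a unit groupoid, so that it is (the unit groupoid on) a space $X_\alpha$. For this I would return to \cref{quotientconstruction}: each $G_\alpha$ is the Hausdorff quotient of the pseudo-metric induced on $G = X$ by a groupoid normal sequence $\{\mathcal{U}_n\}$ from \cref{Coverings}, and $\G0_\alpha$, $\G1_\alpha$ are the quotients of $(\G0,d^0)$, $(\G1,d^1)$ by the relations $\sim^0$, $\sim^1$. Because $\G0 = \G1 = X$ with $s = t = \mathrm{id}$, condition \cref{stinclusion} reads $\mathcal{U}^1_n \leq \mathcal{U}^0_n$ while condition \cref{subspacecondition} reads $\mathcal{U}^0_{n+1} \leq \mathcal{U}^1_n$; hence the two normal sequences cofinally refine one another, define the same pseudo-metric, and induce a single equivalence relation $\sim^0 = \sim^1$ on $X$. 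Together with the subspace-topology claim in \cref{quotientconstruction}, this gives $\G0_\alpha = \G1_\alpha$, so every arrow of $G_\alpha$ is a unit and $G_\alpha$ is the unit groupoid on a second countable, locally compact, Hausdorff space $X_\alpha$, with each $p^\alpha_\beta$ a proper continuous map of spaces. I expect this verification — that the quotient of a unit groupoid remains a unit groupoid — to be the main (and essentially the sole) obstacle.

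Finally, running the dictionary of the first paragraph in reverse for each $G_\alpha$ identifies $C^*(G_\alpha,\sigma_\alpha) = C_0(X_\alpha)$ and the pullback $*$-morphisms $(p^\alpha_\beta)^*$ with the dual maps $C_0(X_\beta) \to C_0(X_\alpha)$; these are $I$-norm-preserving embeddings by \cref{inducedmorphism}. The assertion $C_0(X) = \varinjlim_\alpha C_0(X_\alpha)$ is then exactly the direct-limit conclusion of \cref{sexyapproximationtheorem} transported through this identification, completing the argument.
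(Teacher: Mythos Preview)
Your proposal is correct and takes essentially the same approach as the paper, which simply states that the corollary follows by ``applying the construction of \cref{mainthm.approximation} to the specific example of topological spaces'' without further detail. Your verification that the approximation groupoids remain unit groupoids---via the sandwich $\mathcal{U}^0_{n+1} \leq \mathcal{U}^1_n \leq \mathcal{U}^0_n$ forced by conditions \cref{stinclusion} and \cref{subspacecondition} when $s = t = \mathrm{id}_X$---is a nice explicit check that the paper leaves to the reader.
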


\subsection{Properties of the Approximations} \label{section.approximationproperties}
In this section, we discuss some basic properties of groupoids, and whether or not the approximation construction preserves those properties, or can be suitably modified so the property passes to the approximation groupoids.

\subsubsection{Transitivity}
If $G$ is transitive (that is, for every $x, y \in \G0$ there exists $g \in \G1$ such that $s(g) = x$ and $t(g) = y$), it is not at all hard to see from the construction in \cref{quotientconstruction} that each of the quotient groupoids $G_\alpha$ is also transitive: in the quotient groupoid we have $s([g]) = [s(g)]$ and $t([g]) = [t(g)]$ for any $g \in \G1$.

\subsubsection{Principality}
If $G$ is a principal  groupoid (i.e. the stabilizer groups $G_x^x$ are trivial for all $x \in \G0$), it is not necessarily true that $G_\alpha$ will necessesarily also be principal.  For example, consider the groupoid $G$ (and the approximation) shown below:

\null\hfill
\begin{minipage}[t]{.2\linewidth}
Groupoid $G$: \\

\begin{tikzpicture}
\node (a) at (0, 0) {};
\node (b) at (1, 0) {};

\filldraw (a) circle(0.05);
\filldraw (b) circle(0.05);

\draw[->] (a) to[out=-150, in=-90] (-0.5, 0) node[anchor=east]{$x$} to[out=90, in=150] (a);
\draw[->] (b) to[out=-30, in=-90] (1.5, 0)  node[anchor=west]{$y$} to[out=90, in=30] (b);
\draw[->] (a) to[out=30, in=150] node[midway, label={[yshift=-.15cm]$g$}]{} (b);
\draw[->] (b) to[out = -120, in=-60] node[midway, anchor=north]{$h$} (a);
\end{tikzpicture}
\end{minipage}
\hfill
\begin{minipage}[t]{.25\linewidth}
Covers: \\

$U^0 = \{x, y\}$ \\
$U^1 = \{x, y\}, V^1 = \{g, h\}$
\end{minipage}
\hfill
\begin{minipage}[t]{.28\linewidth}
Approximation groupoid:
\begin{center}
\begin{tikzpicture}
\node (a) at (0, 0) {};

\filldraw (a) circle(0.05);

\draw[->] (a) to[out=180, in=-150] (-0.5, .5) node[label={[yshift=-.2cm, xshift=-.25cm]$x \sim y$}]{} to[out=30, in=100] (a);
\draw[->] (a) to[out=80, in=150] (0.5, .5) node[label={[yshift=-.2cm, xshift=.1cm]$g \sim h$}]{} to[out=-30, in=0] (a);
\end{tikzpicture}
\end{center}
\end{minipage}
\hfill\null

On the arrows of $G$ we place the discrete topology, and we use the groupoid normal sequence of covers $\coverU^0_n = \{U^0\}$, $\coverU^1_n = \{U^1, V^1\}$ to perform the approximation.  Note that $G$ is a principal groupoid, and the normal sequence $\coverU$ satisfies the conditions of \cref{Coverings}; however, the quotient groupoid ($\ZZ_2$, also shown above) is not principal.

An interesting question to address is whether or not the constructions of the covers can be changed so that the approximation groupoids are, in general, principal when $G$ is.
\subsubsection{\'Etalness}
Recall that a topological groupoid $G$ is said to be \textbf{\'etale} if the source map $s$ and the target map $t$ are both local homeomorphisms.  In the case when $G$ is \'etale, we can modify the construction so that the approximation groupoids $G_\alpha$ are also \'etale.

We use the fact that a groupoid $G$ is \'etale if and only if $\G0$ is open in $G$ and $G$ has a Haar system of measures (c.f. page 2 of \cite{Renault}).  Since $\G0$ is clopen in $G$, in constructing any of the approximation groupoids one can modify the cover $\{\mathcal{U}^1_0\}$ of $G$ so that, if $U\in \mathcal{U}_0^1$, then either $U \subset \G0$ or $U \subset (G \setminus \G0)$.  Then, in the pseudo-metric induced by $\{\coverU_n\}$, the object space and arrow space of $G$ have distance 1 from each other, and so $\G0_\alpha$ is clopen in the approximation groupoid $G_\alpha$. Since $G_\alpha$ has an induced Haar system of measures by \cref{sexyapproximationtheorem}, it follows that $G_\alpha$ is \'etale.

\section{Equivalence of Groupoids}\label{renaultequivalencetheorem}
The main result in this section is \cref{equivalence}, which shows that equivalent $\sigma$-compact groupoids have  Morita equivalent $C^*$-algebras.  We will use the terminology from Section 1 of \cite{SW}.  Recall that, for a groupoid $G$, a subset $A\subset \G0$ is said to be \textbf{full} if $G \cdot A = \G0$.  If $G$ is a groupoid and $A\subset \G0$, then we use the notation $G(A)$ to denote the subgroupoid of $G$ whose arrows begin and end in $A$; i.e. the set of all $g\in G$ with $s(g),t(g) \in A$.
\begin{Definition} If $G$ and $H$ are topological groupoids, we say that a groupoid $L$ is a \textbf{linking groupoid for $H$ and $G$} if $L^{(0)} = H^{(0)}\sqcup \G0$ such that $\G0$ and $\gpdH0$ are full clopen subsets of $\gpdL0$ and such that $L(\G0) = G$ and $L(\gpdH0) = H$. 
\end{Definition}
If $G$ and $H$ have Haar systems of measures, then a linking groupoid $L$ for $G$ and $H$ also has a Haar system of measures (see Lemma 4 of \cite{SW}) that restricts to the Haar system on $G$ and $H$ respectively.
\begin{Definition} Suppose $Z$ is a topological space for which there is a continuous and open map $r_Z : Z \to \gpdG0$ (called a \textbf{moment map}).  Define
$$G \ast Z = \{(g, z) \in G \times Z : s(g) = r_Z(z)\}$$
with the relative product topology.  A \textbf{left action} of $G$ on $Z$ is a continuous map $G \ast Z \to Z$ such that $r_Z(z) \cdot z = z$ for all $z \in Z$ and if $(g, h) \in \gpdG2$ and $(h, z) \in G \ast Z$ then $(gh, z) \in G \ast Z$ and $gh \cdot z = g \cdot (h \cdot z)$.

Say that the action is \textbf{free} if $g \cdot z = z$ implies $g = r_Z(z)$.  The action is called \textbf{proper} if the map $G \ast Z \to Z \times Z$ given by $(g, z) \mapsto (g \cdot z, z)$ is proper.

A \textbf{right action} is defined analogously, with the difference that the moment map is denoted $s_Z : Z \to \gpdG0$ and $z \cdot g$ is defined if and only if $s_Z(z) = r(g)$.
\end{Definition}
For a left action of $G$ on $Z$, $\lorbit{G}{Z}$ denotes the orbit space of the action; for a right action, we use the notation $\rorbit{Z}{G}$.
\begin{Definition} $G$ and $H$ are said to be \textbf{equivalent} if there exists a locally compact Hausdorff space $Z$, called a \textbf{$(G,H)$-equivalence}, such that the following conditions hold:
\begin{enumerate}
    \item $Z$ is a free and proper left $G$-space.
    \item $Z$ is a free and proper right $H$-space.
    \item The actions of $G$ and $H$ on $Z$ commute.
    \item The moment map $r_Z : Z \to \gpdG0$ induces a homeomorphism of $\rorbit{Z}{H}$ onto $\G0$.
    \item The moment map $s_Z : Z \to \gpdH0$ induces a homeomorphism of $\lorbit{G}{Z}$ onto $\gpdH0$.
\end{enumerate}
\end{Definition}
Two groupoids $G$ and $H$ are equivalent if and only if there exists a linking groupoid for $G$ and $H$ (Lemma 3 of \cite{SW} shows how to construct a linking groupoid from an equivalence; conversely, if $L$ is a linking groupoid for $G$ and $H$, and we let $A = \gpdG0$, $B = \gpdH0$, then one can show that $Z = L^A_B$ is a $(G, H)$- equivalence).  Note that these results are usually stated under the assumptions that $G$ and $H$ are second countable; however, the proofs do not use the second countability assumption and go through without change for $\sigma$-compact.

The following lemma was suggested to the authors by Dana Williams and its proof is essentially the same as the one given in his book \cite{Williams} (which is currently in draft stage) on groupoid $C^*$-algebras. 

\begin{Lemma}
Let $G$ and $H$ be $\sigma$-compact groupoids with open source and target maps, and let $Z$ be a $(G,H)$-equivalence.  Then $Z$ must itself be $\sigma$-compact.
\end{Lemma}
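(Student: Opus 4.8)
The plan is to reduce $\sigma$-compactness of $Z$ to that of the groupoid data $(G,H,\gpdG0,\gpdH0)$ by exploiting the right $H$-action together with the fact that the moment map $r_Z:Z\to\gpdG0$ identifies $\rorbit{Z}{H}$ with $\gpdG0$. The crucial structural observation is that $r_Z$ is continuous, open and surjective, is constant on $H$-orbits (so that it descends to $\rorbit{Z}{H}$), and has fibres equal to the $H$-orbits. Because of this, it is enough to produce a single $\sigma$-compact subset $S\subseteq Z$ which meets every $H$-orbit, equivalently which satisfies $r_Z(S)=\gpdG0$; all of $Z$ is then recovered from $S$ by acting by $H$.

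First I would construct such an $S$. For each $z\in Z$, local compactness of $Z$ supplies a relatively compact open neighbourhood $V_z$. Since $r_Z$ is open, the sets $r_Z(V_z)$ form an open cover of $\gpdG0$; and $\gpdG0$ is $\sigma$-compact (being a closed subset of the $\sigma$-compact groupoid $G$), hence Lindel\"of. Extracting a countable subcover $\{r_Z(V_{z_n})\}_n$ and setting $S=\bigcup_n \overline{V_{z_n}}$, each $\overline{V_{z_n}}$ is compact, so $S$ is $\sigma$-compact, while $r_Z(S)\supseteq \bigcup_n r_Z(V_{z_n}) = \gpdG0$, so $S$ meets every $H$-orbit.

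Next I would push $S$ around by $H$. Let $a:Z\ast H\to Z$, $(z,h)\mapsto z\cdot h$, denote the right action map, where $Z\ast H=\{(z,h)\in Z\times H: s_Z(z)=t(h)\}$ is its domain of definition. Because $s_Z$ and $t$ are continuous and $\gpdH0$ is Hausdorff, $Z\ast H$ is closed in $Z\times H$; consequently $S\ast H:=(S\times H)\cap(Z\ast H)$ is a closed subset of the $\sigma$-compact space $S\times H$ (a finite product of $\sigma$-compact spaces), and is therefore itself $\sigma$-compact. Since $r_Z$ induces a homeomorphism $\rorbit{Z}{H}\cong\gpdG0$, two points of $Z$ lie in a common $H$-orbit precisely when they share the same image under $r_Z$; hence $r_Z(S)=\gpdG0$ forces $a(S\ast H)=S\cdot H=Z$. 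As a continuous image of the $\sigma$-compact set $S\ast H$, the space $Z$ is $\sigma$-compact, completing the argument.

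I expect the main obstacle to be the passage from a countable cover of the base $\gpdG0$ to a cover of the total space $Z$: a cover of $\gpdG0$ does not lift to a cover of $Z$ along the fibres, and the role of the action map $a$ is precisely to recover the fibre directions. The only real points to verify are that $Z\ast H$ is closed (which makes the relevant sets $\sigma$-compact) and that $S\cdot H=Z$, and both follow directly from the stated hypotheses. It is worth noting that this approach uses only continuity of the $H$-action together with the openness and orbit-identifying property of $r_Z$; neither freeness nor properness of the actions is needed.
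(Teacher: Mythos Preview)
Your argument is correct and follows essentially the same approach as the paper: lift a countable family of compact sets from the orbit space back to $Z$ via the openness of the moment map, then sweep out all of $Z$ by the groupoid action. The only cosmetic differences are that the paper uses the left $G$-action (with $\lorbit{G}{Z}\cong\gpdH0$) rather than the right $H$-action, and forms explicit products $C_n\cdot T_n$ of compact sets rather than invoking closedness of $Z\ast H$ in $Z\times H$; neither changes the substance of the proof.
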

\begin{proof}
 By assumption, $G$ and $H$ are $\sigma$-compact, and since $\lorbit{G}{Z}$ is homeomorphic to $\gpdH0$, so is $\lorbit{G}{Z}$.  Let $\lorbit{G}{Z} = \cup_n K_n$ be an exhaustion of $\lorbit{G}{Z}$ by $\sigma$-compact neighbourhoods.  Using the fact that the orbit map $\pi : Z \to \lorbit{G}{Z}$ is continuous and open (see Proposition 2.1 of \cite{MW3}), we can lift each $K_n$ to a compact set $T_n$ in $Z$ such that $\pi(T_n) = K_n$.  Namely, at each $z \in Z$ choose a relatively compact open neighbourhood $U_z$.  Then $\{\pi(U_z)\}$ is an open cover of $K_n$, so necessarily it has a finite subcover $\{\pi(U_{z_1}), \ldots \pi(U_{z_k})\}$.  Let $T_n = \mathop{\cup}\limits_{i = 1}^k \left(\overline{U}_{z_i} \cap \pi^{-1}(K_n)\right)$.
 
 Let $G = \cup_n C_n$ be an exhaustion of $G$ by compact neighborhoods.  Since the action of $G$ on $Z$ is continuous, it follows that $\{C_n \cdot T_n\}$ is an exhaustion of $Z$ by compact neighborhoods, concluding the proof that $Z$ is $\sigma$-compact.
\end{proof}

As shown in Lemma 3 of \cite{SW}, the linking groupoid associated to a $(G,H)$-equivalence $Z$ is topologically the disjoint union $G\sqcup H\sqcup Z \sqcup Z^{op}$ with object space $\G0\sqcup \HH$. Being that $G$, $H$ and $Z$ are all $\sigma$-compact, it follows that the linking groupoid associated to an equivalence between $\sigma$-compact groupoids with open source and target maps is $\sigma$-compact. Moreover, every linking groupoid comes from such an equivalence, leading to the following result:
\begin{Corollary}
Let $G$ and $H$ be $\sigma$-compact groupoids with open source and target maps and let $L$ be a linking groupoid for $G$ and $H$. Then $L$ is $\sigma$-compact.
\end{Corollary}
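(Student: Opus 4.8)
The plan is to reverse the construction recalled just above the statement: from the linking groupoid $L$ I would recover a $(G,H)$-equivalence $Z$, conclude that $Z$ is $\sigma$-compact by the preceding Lemma, and then read off the $\sigma$-compactness of $L$ from its topological decomposition.

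First I would set $A = \G0$ and $B = \HH$; these are full clopen subsets of $\gpdL0$ by the definition of a linking groupoid. Following the discussion immediately preceding the statement, the set $Z = L^A_B$ of arrows of $L$ with target in $A$ and source in $B$, equipped with the subspace topology, is a $(G,H)$-equivalence. Since $G$ and $H$ are $\sigma$-compact and have open source and target maps, the hypotheses of the preceding Lemma are satisfied, and hence that Lemma yields that $Z$ is $\sigma$-compact.

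Next I would invoke the structural description from Lemma 3 of \cite{SW}: topologically, $L$ is the disjoint union $G \sqcup H \sqcup Z \sqcup Z^{op}$, with object space $\G0 \sqcup \HH$. Here $G$ and $H$ are $\sigma$-compact by hypothesis, $Z$ is $\sigma$-compact by the previous step, and $Z^{op}$ is $\sigma$-compact because as a topological space it is homeomorphic to $Z$. A finite disjoint union of $\sigma$-compact spaces is again $\sigma$-compact, so $L$ is $\sigma$-compact, as claimed.

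I do not expect any genuine obstacle here: the real content sits in the preceding Lemma and in the decomposition of $L$ imported from \cite{SW}. The only points requiring care are verifying that the recovered equivalence $Z$ meets the hypotheses of the preceding Lemma (which it does, since $G$ and $H$ carry open source and target maps) and observing that $\sigma$-compactness is preserved under finite disjoint unions.
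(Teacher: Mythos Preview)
Your proposal is correct and follows essentially the same route as the paper: the paper's argument (given in the paragraph immediately preceding the Corollary) is precisely to recover the equivalence $Z$ from $L$, apply the preceding Lemma to get $\sigma$-compactness of $Z$, and then invoke the disjoint-union decomposition $L = G \sqcup H \sqcup Z \sqcup Z^{op}$ from \cite{SW}. You have simply made the extraction of $Z = L^A_B$ and the finite-union step more explicit than the paper does.
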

If $L$ is a linking groupoid for $G$ and $H$ we can approximate $L$ in such a way that we get equivalent approximations for $G$ and $H$.

\begin{Proposition}\label{approximatethatlink}
Let $G$ and $H$ be $\sigma$-compact groupoids with Haar systems and let $L$ be a $\sigma$-compact linking groupoid for $G$ and $H$.  We may assume $L$ is endowed with a Haar system that restricts to the given Haar sytems for $G$ and $H$. There exists an approximation $\{L_\alpha,q^\alpha_\beta,A\}$ for $L$ (as in \cref{mainthm.approximation}), with projection maps $q_\alpha : L \to L_\alpha$, such that if we let $H_\alpha = q_\alpha(H)$ and $G_\alpha = q_\alpha(G)$ then:
\begin{enumerate}
\item $\{G_\alpha, \restrict{q^\alpha_\beta}{G_\alpha}, A\}$ is an approximation for $G$ and $\{H_\alpha, \restrict{q^\alpha_\beta}{H_\alpha}, A\}$ is an approximation for $H$, and
\item $L_\alpha$ is a linking groupoid for $G_\alpha$ and $H_\alpha$.
\end{enumerate}  \end{Proposition}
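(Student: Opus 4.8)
The plan is to run the approximation machinery of \cref{sexyapproximationtheorem} on the linking groupoid $L$, but using only those groupoid normal sequences that respect the block decomposition of $L$. Recall from the discussion preceding this proposition (Lemma 3 of \cite{SW}) that, topologically, $L = G \sqcup H \sqcup Z \sqcup Z^{op}$ with object space $\gpdL0 = \G0 \sqcup \gpdH0$; in particular each of the four arrow blocks is clopen in $\gpdL1$, and each of $\G0$ and $\gpdH0$ is clopen in $\gpdL0$. First I would single out the subcollection $\mathcal{N}'$ of those groupoid normal sequences $\{\coverU_n\}$ on $L$ for which every element of every cover $\coverU_n^0$ and $\coverU_n^1$ is contained in a single block. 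Since the blocks are clopen, the construction in \cref{Coverings} can be started with an initial cover $\coverU_0$ adapted to the decomposition, and each subsequent star-refinement preserves this adaptedness; hence $\mathcal{N}'$ is cofinal in the directed set of all groupoid normal sequences on $L$, and the inverse limit taken over $\mathcal{N}'$ is still $L$, exactly as in \cref{sexyapproximationtheorem}.

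Fix $\alpha \in \mathcal{N}'$ and let $q_\alpha : L \to L_\alpha$ be the associated quotient. Because no cover element meets two blocks, the induced equivalence relation identifies only arrows (respectively objects) lying within a single block, so the quotient splits as $L_\alpha = G_\alpha \sqcup H_\alpha \sqcup Z_\alpha \sqcup Z_\alpha^{op}$ with $L_\alpha^{(0)} = G_\alpha^{(0)} \sqcup H_\alpha^{(0)}$, where $G_\alpha = q_\alpha(G)$ and $H_\alpha = q_\alpha(H)$. Since $\G0$ is saturated (no object outside it is identified with one inside), $q_\alpha^{-1}(G_\alpha^{(0)}) = \G0$, so the quotient topology makes $G_\alpha^{(0)}$, and likewise $H_\alpha^{(0)}$, clopen in $L_\alpha^{(0)}$. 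Using that $s([w]) = [s(w)]$ and $t([w]) = [t(w)]$ (see \cref{quotientconstruction}) together with saturation, an arrow $[w] \in L_\alpha$ has both source and target in $G_\alpha^{(0)}$ precisely when $w \in L(\G0) = G$, so $L_\alpha(G_\alpha^{(0)}) = G_\alpha$ and similarly $L_\alpha(H_\alpha^{(0)}) = H_\alpha$. Fullness passes through the surjection $q_\alpha$: given $[v] \in L_\alpha^{(0)}$, fullness of $\G0$ in $\gpdL0$ supplies an arrow in $L$ joining a representative $v$ to $\G0$, whose image joins $[v]$ to $G_\alpha^{(0)}$. This establishes conclusion (2), that $L_\alpha$ is a linking groupoid for $G_\alpha$ and $H_\alpha$.

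For conclusion (1), restricting a sequence $\{\coverU_n\} \in \mathcal{N}'$ to the $G$-block yields the sequence $\{\restrict{\coverU_n^1}{G}, \restrict{\coverU_n^0}{\G0}\}$ of covers of $G$, and I would check that it is again a groupoid normal sequence for $G$: the multiplication and inversion conditions \cref{contmult}, \cref{inverse} are internal to the $G$-block, while the source, target, subspace, intersection-separating, Haar, and cocycle conditions restrict directly (using the exhaustion $\{K_n \cap G\}$ and the fact that the Haar system on $L$ restricts to that of $G$). The quotient of $G$ by the relation induced by this restricted sequence is exactly $G_\alpha$, and $\restrict{q^\alpha_\beta}{G_\alpha}$ is the corresponding bonding map. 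To see that $\{G_\alpha, \restrict{q^\alpha_\beta}{G_\alpha}, A\}$ is a genuine inverse approximation of $G$ (\cref{approximate}), I would verify that these restricted sequences are cofinal among all groupoid normal sequences on $G$: given any such sequence on $G$, extend it across the remaining blocks to a decomposition-respecting normal sequence on $L$ and apply \cref{Coverings}; the refinement direction then forces its restriction to cofinally refine the original. Cofinality yields $\varprojlim_\alpha G_\alpha = G$ exactly as in \cref{sexyapproximationtheorem}, and symmetrically $\varprojlim_\alpha H_\alpha = H$.

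The step I expect to be the main obstacle is this last extension-and-cofinality argument: one must confirm that an arbitrary groupoid normal sequence on $G$ can be extended across the $Z$ and $Z^{op}$ blocks to a sequence on $L$ satisfying all the conditions of \cref{Coverings} simultaneously. The delicate interaction is at the object level, since arrows in $Z$ have source and target in different object blocks, so the intersection-separating condition \cref{intersectionseparatingcondition} on $\G0$ must accommodate the targets of $Z$-arrows as well as of $G$-arrows; however, because the $L$-version of this condition only strengthens its $G$-version, the restriction of any decomposition-respecting groupoid normal sequence on $L$ remains a valid groupoid normal sequence on $G$, and the argument goes through. Everything else follows from the properties of the single-groupoid approximation already established in \cref{quotientconstruction} and \cref{Haarsystemapproximation}.
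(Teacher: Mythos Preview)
Your proposal is correct and follows the same core idea as the paper: restrict to groupoid normal sequences on $L$ whose cover elements respect the clopen block decomposition, so that the induced equivalence relation cannot identify points across blocks. The paper imposes only the weaker separation condition that no cover element meet both $G$ and $H$, relying on the fact that $q_\alpha$ commutes with $s$ and $t$ (so an arrow of $Z$ can never be identified with an arrow of $G$, since their sources land in different object blocks); your four-block separation is stronger than necessary but of course still works. You also give a considerably more careful treatment of conclusion (1), checking that the restriction of a block-respecting groupoid normal sequence to $G$ is again a groupoid normal sequence and that such restrictions are cofinal; the paper leaves this point essentially implicit. Your worry about the intersection-separating condition \cref{intersectionseparatingcondition} is well placed and correctly resolved: the $L$-condition on $K_n \cap \G0$ only adds sets $s(U), t(U)$ for $U$ meeting $Z$ or $Z^{op}$, which can only make the intersection refinement finer, so the restricted sequence still satisfies the $G$-version.
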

\begin{proof}
We need to tweak the proof of \cref{mainthm.approximation} to require that the covers $\mathcal{U}_n$ in the groupoid normal sequences separate between $G$ and $H$, in the sense that if $U$ is an open set in one of the covers such that $U\cap G \neq \emptyset$ then $U\cap H = \emptyset$, and vice versa; however, it should be clear that this requirement can be enforced, since $G$ and $H$ are clopen sets in $L$. This guarentees that the images of $G$ and $H$ are clopen sets in the approximation $L_\alpha$ determined by the covers $\{\mathcal{U}_n\}$. 

It follows in particular that $L_\alpha^{(0)}$ is a disjoint union of the clopen sets $\G0_\alpha$ and $H^{(0)}_\alpha$. The fact that $\G0_\alpha$ is full in $L_\alpha$, and $L_\alpha(\G0_\alpha) = G_\alpha$ (and similarly for $\gpdH0$) follows immediately from the fact that the quotient map $q_\alpha : L \to L_\alpha$ commutes with both the source and target maps and the action of $L$ on $\gpdG0$.  Hence $L_\alpha$ is a linking groupoid for $G_\alpha$ and $H_\alpha$.
\end{proof}

\begin{Theorem}[Equivalence Theorem] \label{equivalence}
Let $G$ and $H$ be $\sigma$-compact groupoids with Haar systems. If $G$ and $H$ are equivalent then $C^*(G)$ and $C^*(H)$ are  Morita equivalent. 
\end{Theorem}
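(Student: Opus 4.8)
The plan is to reduce to the second-countable case via the linking groupoid, and then push the resulting Morita equivalence through the direct limit using the characterization of Morita equivalence by complementary full corners. Since $G$ and $H$ are equivalent, there is a linking groupoid $L$ for $G$ and $H$; by the preceding Corollary $L$ is $\sigma$-compact, and we may equip it with a Haar system restricting to the given ones on $G$ and $H$ (Lemma 4 of \cite{SW}). Because $\G0$ and $\gpdH0$ are clopen in $\gpdL0 = \G0 \sqcup \gpdH0$, the characteristic functions $p := 1_{\G0}$ and $1 - p = 1_{\gpdH0}$ determine complementary projections in the multiplier algebra $M(C^*(L))$, and one checks directly at the level of convolution algebras that $p\,C^*(L)\,p = C^*(G)$ and $(1-p)\,C^*(L)\,(1-p) = C^*(H)$, since an arrow lies in $L(\G0) = G$ exactly when both its source and target lie in $\G0$. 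Thus it suffices to prove that $p$ and $1 - p$ are \emph{full} projections: complementary full corners are Morita equivalent (with $p\,C^*(L)\,(1-p)$ as imprimitivity bimodule), which would give the claim.

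First I would approximate the linking groupoid. By \cref{approximatethatlink} there is an approximating system $\{L_\alpha, q^\alpha_\beta, A\}$ of second countable, locally compact groupoids (as in \cref{mainthm.approximation}) such that each $L_\alpha$ is a linking groupoid for $G_\alpha := q_\alpha(G)$ and $H_\alpha := q_\alpha(H)$, and such that $\{G_\alpha\}$ and $\{H_\alpha\}$ are approximations of $G$ and $H$. Applying \cref{mainthm.approximation} to $L$, $G$ and $H$ simultaneously, we obtain compatible direct limits $C^*(L) = \varinjlim_\alpha C^*(L_\alpha)$, $C^*(G) = \varinjlim_\alpha C^*(G_\alpha)$ and $C^*(H) = \varinjlim_\alpha C^*(H_\alpha)$. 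Writing $\phi_\alpha : C^*(L_\alpha) \to C^*(L)$ for the canonical maps (which have dense union of images), the corner projections are compatible with the system: each $L_\alpha$ has object space $\G0_\alpha \sqcup \gpdH0_\alpha$, the projections $p_\alpha := 1_{\G0_\alpha} \in M(C^*(L_\alpha))$ satisfy $p_\alpha C^*(L_\alpha) p_\alpha = C^*(G_\alpha)$, and under the bonding maps $p_\alpha$ is carried to $p_\beta$, and to $p$ in the limit.

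Next I would invoke the classical theorem in the second-countable setting. Each $G_\alpha$ and $H_\alpha$ is second countable, carries a Haar system, and is equivalent (via the linking groupoid $L_\alpha$); so by the Equivalence Theorem of Muhly--Renault--Williams \cite{MRW} their maximal $C^*$-algebras are Morita equivalent. What their construction establishes is precisely that the corner $C^*(G_\alpha) = p_\alpha C^*(L_\alpha) p_\alpha$ is \emph{full} in $C^*(L_\alpha)$ (and likewise $C^*(H_\alpha) = (1-p_\alpha)C^*(L_\alpha)(1-p_\alpha)$ is full). This is the only place where second countability enters.

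Finally I would show that fullness passes to the limit. Let $I$ be the closed two-sided ideal of $C^*(L)$ generated by $C^*(G) = p\,C^*(L)\,p$. For each $\alpha$, compatibility of the projections gives $\phi_\alpha(p_\alpha C^*(L_\alpha) p_\alpha) \subset C^*(G) \subset I$; since $p_\alpha C^*(L_\alpha) p_\alpha$ is full in $C^*(L_\alpha)$, applying $\phi_\alpha$ to the identity $\overline{C^*(L_\alpha)\,p_\alpha C^*(L_\alpha) p_\alpha\,C^*(L_\alpha)} = C^*(L_\alpha)$ yields $\phi_\alpha(C^*(L_\alpha)) \subset I$. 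As the images $\phi_\alpha(C^*(L_\alpha))$ have dense union in $C^*(L)$, we conclude $I = C^*(L)$, so $p$ is full; the same argument applies to $1 - p$. Hence $C^*(G)$ and $C^*(H)$ are complementary full corners of $C^*(L)$, and are therefore Morita equivalent. The main obstacle, and the step requiring the most care, is the bookkeeping of the corner projections across the direct limit -- in particular checking that the $p_\alpha \in M(C^*(L_\alpha))$ assemble into a well-defined projection $p \in M(C^*(L))$ compatible with all bonding maps, so that the corner identities and the fullness argument go through. This rests entirely on the simultaneous, compatible approximation of $L$, $G$ and $H$ furnished by \cref{approximatethatlink}.
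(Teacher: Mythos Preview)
Your proposal is correct and follows essentially the same route as the paper: both arguments take the linking groupoid $L$, approximate it simultaneously with $G$ and $H$ via \cref{approximatethatlink}, invoke the second-countable Equivalence Theorem for each $L_\alpha$, and then push the resulting fullness through the direct limit $C^*(L) = \varinjlim_\alpha C^*(L_\alpha)$. The only cosmetic difference is the packaging of the Morita equivalence: the paper verifies directly that the inner-product ranges of the bimodule $p\,C^*(L)\,q$ are dense in $C^*(G)$ and $C^*(H)$, whereas you phrase the same computation as showing that the complementary corner projections $p$ and $1-p$ are full in $C^*(L)$; these are equivalent formulations and the limit argument is identical in substance.
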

\begin{proof}
Recall how the equivalence theorem works in the second countable setting (modulo technicalities that we will not need to consider here). Let $L$ be a linking groupoid for the second countable groupoids $G$ and $H$. The idea is to show that $C^*(L)$ is a linking algebra for $C^*(G)$ and $C^*(H)$. Notice first that the characteristic functions $p_G$ and $p_H$ for $\G0$ and $\HH$, respectively, are projections inside the multiplier algebra $M(C^*(L))$. One can show that $pC^*(L)q$ is an imprimitivity $C^*(G)\text{-}C^*(H)$ bimodule.  

In the $\sigma$-compact case, we have the exact same framework. It is immediate that $pC^*(L)q$ is a $C^*(G)\text{-}C^*(H)$ Hilbert bimodule and that it satisfies all the requisite properties for being an $C^*(G)\text{-}C^*(H)$ imprimitivity Hilbert bimodule except for the fullness condition ( definition 3.1 in \cite{RW}). 

By \cref{approximatethatlink} and \cref{mainthm.approximation}, we know that 

\begin{itemize}
    \item $C^*(L)$ is the direct limit of the subalgebras $C^*(L_\alpha)$ (and analogously for $G$ and $H$ using the same approximation)
    \item $L_\alpha$ is a linking groupoid for $G_\alpha$ and $H_\alpha$ for each $\alpha$.
\end{itemize}

It is clear that the projections $p$ and $q$ are equal to the pullbacks of the projections $p_\alpha$ and $q_\alpha$ (coming from the characteristic functions on $\G0_\alpha$ and $H_{\alpha}^{(0)}$) in $M(C^*(L_\alpha))$ (under the induced mapping $M(C^*(L_\alpha)) \to M(C^*(L))$). 

We know that the embedded copy of  $p_\alpha C^*(L_\alpha) q_\alpha$ in $pC^*(L)q$ (with the inherited bimodule structure) is an imprimitivity bimodule for $C^*(G_\alpha)$ and $C^*(H_\alpha)$. It follows that the closure of $span\{\langle x,y\rangle_C^*(G): x,y \in pC^*(L)q\})$ contains $C^*(G_\alpha)$ for all $\alpha$. As $\bigcup_\alpha C^*(G_{\alpha})$ is dense in $C^*(G)$, it follows that $span\{\langle x,y\rangle_C^*(G): x,y \in pC^*(L)q\})$ is dense in $C^*(G)$. The same argument holds for $C^*(H)$ and so it follows that $pC^*(L)q$  is an imprimitivity bimodule for $C^*(G)$ and $C^*(H)$. 
\end{proof}

%
%
%
%
%
%

\end{document}